 \tikzset{
  on each segment/.style={
    decorate,
    decoration={
      show path construction,
      moveto code={},
      lineto code={
        \path [#1]
        (\tikzinputsegmentfirst) -- (\tikzinputsegmentlast);
      },
      curveto code={
        \path [#1] (\tikzinputsegmentfirst)
        .. controls
        (\tikzinputsegmentsupporta) and (\tikzinputsegmentsupportb)
        ..
        (\tikzinputsegmentlast);
      },
      closepath code={
        \path [#1]
        (\tikzinputsegmentfirst) -- (\tikzinputsegmentlast);
      },
    },
  },
  mid arrow/.style={postaction={decorate,decoration={
        markings,
        mark=at position 0.6 with {\arrow[#1]{stealth}} 
      }}},
}
\numberwithin{figure}{section}
\newcommand{\checks}[1]{{\color{black}{#1}}} 
\newtheorem{theorem}{Theorem}[section]
\newtheorem{lemma}[theorem]{Lemma}
\newtheorem{corollary}[theorem]{Corollary}
\newtheorem{main theorem}[theorem]{Main Theorem}
\newtheorem{proposition}[theorem]{Proposition}
\newtheorem{property}[theorem]{Proposition}
\newtheorem{definition}[theorem]{Definition}
\newtheorem{remark}[theorem]{Remark}
\newtheorem{example}[theorem]{Example}
\newtheorem{notation}[theorem]{Notation}
\newtheorem{prop-coro}[theorem]{Corollary/Proposition}
\numberwithin{equation}{section}
\def\<{\langle} 
\def\>{\rangle} 
\def\NN{\mathbb{N}} 
\def\I{\mathop{\rm \mathcal{I}}\nolimits}
\def\extdim{\mathop{\rm ext.dim}\nolimits}
\def\id{\mathop{\rm id}\nolimits}
\def\pd{\mathop{\rm pd}\nolimits}
\def\mod{\mathsf{mod}}
\def\add{\mathsf{add}}
\def\gldim{\mathop{\rm gl.dim}\nolimits}
\def\rep{\mathop{{\rm rep}}\nolimits}
\def\End{\mathop{\rm End}\nolimits}
\def\rep{\mathrm{rep}}
\def\add{\mathop{\rm add}\nolimits}
\def\Hom{\mathop{\rm Hom}\nolimits}
\def\End{\mathop{\rm End}\nolimits}
\def\Ker{\mathop{\rm Ker}\nolimits}
\def\Im{\mathop{\rm Im}\nolimits}
\def\rad{\mathop{\rm rad}\nolimits}
\def\soc{\mathop{\rm soc}\nolimits}
\def\top{\mathop{\rm top}\nolimits}
\def\gldim{\mathop{\rm gl.dim}\nolimits}
\def\extdim{\mathop{\rm ext.dim}\nolimits}
\def\itLamb{\mathit{\Lambda}}
\newcommand{\gent}{\text{\Clocklogo}}
\newcommand{\ospec}{{\rm OSpec}}
\newcommand{\udim}{{\rm u.dim}}
\def\LL{\mathop{\rm \ell\ell}\nolimits}
\newcommand{\edge}{\ar@{-}}
\def\Pic{Figure}
\def\kk{\mathbf{k}} \def\Q{\mathcal{Q}} \def\I{\mathcal{I}}
\def\A{\mathbb{A}}  
\def\dA{\overrightarrow{\mathbb{A}}}  
\def\s{\mathfrak{s}} \def\t{\mathfrak{t}}
\def\Sub{\mathsf{Sub}}
\def\Fac{\mathsf{Fac}}
\def\Aus{\mathsf{Aus}}
\def\Gh{\mathsf{Gh}}
\def\Obj{\mathrm{Obj}}
\def\circ{\ \lower-0.2ex\hbox{\tikz\draw (0pt, 0pt) circle (.1em);} \ }
\def\circled{\ \lower-0.2ex\hbox{\tikz\draw (0pt, 0pt) circle (.1em);} \ }
\def\multi{\ \lower-0.2ex\hbox{\tikz\fill (0pt, 0pt) circle (.1em);} \ }
\def\F{\mathcal{F}}
\def\T{\mathcal{T}}
\def\V{\mathcal{V}}
\def\simp{\mathsf{S}}
\newcommand{\To}[1]{\mathop{\longrightarrow}\limits^{#1}}
\newcommand{\defines}{\it\color{black}}
\begin{document}

\title{The Orlov spectra of Abelian categories$^{*}$}
\author{Junling Zheng$^{1}$}

\author{Yu-Zhe Liu$^{2,\dag}$}

\thanks{$*$
\textbf{2010 Mathematics Subject Classification}:
  18G20,
  16E10,
  16G10.
}
\thanks{\ \ \textbf{Key words}:
        Extension dimensions, Artin algebras, Orlov spectrums. }

\thanks{$1$ Department of Mathematics, China Jiliang University, Hangzhou, 310018, Zhejiang Province, P. R. China.
        E-mail: zhengjunling@cjlu.edu.cn / zjlshuxue@163.com. }
\thanks{$2$ School of Mathematics and statistics, Guizhou University, Guiyang 550025, Guizhou Province, P. R. China.
        E-mail: liuyz@gzu.edu.cn / yzliu3@163.com }
\thanks{$\dag$ Corresponding author.}






\maketitle

\begin{abstract}
We introduced the notion of Orlov spectra of Abelian categories, and study its some properties.
In particular, we give precise result of Orlov spectra of
algebras with type $\mathbb{A}_{n}$.
\end{abstract}

\setcounter{tocdepth}{3}
\setcounter{secnumdepth}{3}
\tableofcontents

\pagestyle{myheadings}
\markboth{\rightline {\scriptsize  J. L. Zheng, Y. Z Liu\emph{}}}
         {\leftline{\scriptsize  The Orlov spectra of Abelian categories }}


\section{Introduction}

Rouquier introduced in \cite{rouquier2006representation,rouquier2008dimensions} the dimension of a triangulated category under the idea of Bondal and van den Bergh in \cite{bondal2003generators}.
Roughly speaking, it is an invariant that measures how quickly the category can be built from one object. This dimension plays an important role in representation theory
\cite{ballard2012orlov,
bergh2015gorenstein,
chen2008algebras,
han2009derived,
oppermann2012generating,
rouquier2006representation,
rouquier2008dimensions,
zheng2020upper}.
For example, it can be used to compute the representation dimension of Artin algebras \cite{rouquier2006representation, oppermann2009lower}.
Similar to the dimension of triangulated categories, the (extension) dimension of an Abelian category was introduced by Beligiannis in \cite{beligiannis2008some}, also see \cite{dao2014radius}. The size of the extension dimension reflects how far an Artin algebra is from a finite representation type, some relate result can be seen in \cite{zheng2019extension}.

The spectrum of a triangulated category is an invariant of the category introduced by Orlov in 2009 (see \cite{orlov2009}). It can be seen as a generalization of the Rouquier dimension for triangulated categories. In 2012, Ballard, Favero, and Katzarkov systematically studied this invariant and named it the Orlov spectrum (see \cite{ballard2012orlov}). Similar to the Orlov spectrum of triangulated categories, we will define the Orlov spectrum for Abelian categories, which can be regarded as a generalization of the extension dimension for Abelian categories.
In this paper, we will mainly investigate some properties of the Orlov spectrum of finite representation type algebras.
We used the properties of coghost to demonstrate our main results, where coghost is a dual of ghost which is introduced by Lank in \cite{Lank2023}.
The main theorems of our paper are as follows:

\begin{theorem} \label{thm:1.1} \

{\rm(1)} {\rm(see Theorem \ref{thm:main 1})}
Let $0  \longrightarrow L\longrightarrow  M\longrightarrow N\longrightarrow 0$
be an exact sequence in $\mod\itLamb$. Then
\[\max\{\LL^{t_{\simp}}(L), \LL^{t_{\simp}}(N)\}  \leqslant \LL^{t_{\simp}}(M) \leqslant \LL^{t_{\simp}}(L)+\LL^{t_{\simp}}(N),\]
where $\LL^{t_{\simp}}$ is $t_{\simp}$-radical layer length {\rm(}see Definition \ref{radical-length}{\rm)}.
In particular, if $\LL^{t_{\simp}}(L)=0$, then $\LL^{t_{\simp}}(N)=\LL^{t_{\simp}}(M);$
if $\LL^{t_{\simp}}(N)=0$, then $\LL^{t_{\simp}}(L)=\LL^{t_{\simp}}(M)$.

{\rm(2)} {\rm(see Theorem \ref{thm:main 2})}
If $\itLamb$ is representation-finite, then
\[\Big\{
         \Big\lceil
            \dfrac{\LL^{t_{\simp}}(\itLamb)}{d}
         \Big\rceil
         -1
     \ \Big|\
         d \in \NN
         \text{ and }
         1 \leqslant d < \LL^{t_{\simp}}(\itLamb)
 \Big\}
\subseteq \ospec(\itLamb),\]
where $\lceil \alpha\rceil$ is the least integer greater than $\alpha$.
\end{theorem}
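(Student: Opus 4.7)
The plan is to establish Part~(1) as a bookkeeping result on the radical-layer-length and then use it as the main engine in Part~(2): the additive bound from (1) gives an upper bound on generation times of carefully chosen generators, and the coghost technique of Lank matches it from below.

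For Part~(1), the lower bound $\max\{\LL^{t_{\simp}}(L),\LL^{t_{\simp}}(N)\}\leqslant \LL^{t_{\simp}}(M)$ is pure monotonicity: since $t_{\simp}$ is a radical-type subfunctor, both the inclusion $L\hookrightarrow M$ and the projection $M\twoheadrightarrow N$ respect the filtration $\{t_{\simp}^{i}(-)\}_{i\geqslant 0}$, so $\LL^{t_{\simp}}$ can only drop under subobjects and quotients. For the upper bound I would induct on $n=\LL^{t_{\simp}}(L)+\LL^{t_{\simp}}(N)$. In the inductive step, apply $t_{\simp}$ to the short exact sequence; a standard kernel--cokernel diagram chase produces
\[0\to t_{\simp}(L)\to t_{\simp}(M)\to t_{\simp}(N)\to 0,\]
which peels off one layer from each of $L, M, N$ simultaneously and reduces to a shorter case. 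The two equality statements at the end of (1) fall out at once from the combined bounds.

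For Part~(2), write $N_{0}:=\LL^{t_{\simp}}(\itLamb)$ and fix $d\in\NN$ with $1\leqslant d<N_{0}$. Using representation-finiteness, I would set $M_{d}$ to be the direct sum of a complete list of indecomposable $\itLamb$-modules $X$ with $\LL^{t_{\simp}}(X)\leqslant d$. Iterating the upper bound of Part~(1) shows that every module of $\LL^{t_{\simp}}$-length at most $kd$ lies in the $k$-fold extension closure of $M_{d}$, so taking $k=\lceil N_{0}/d\rceil$ produces all of $\mod\itLamb$; consequently the generation time of $M_{d}$ is at most $\lceil N_{0}/d\rceil-1$. For the matching lower bound I would apply the dual of Lank's ghost lemma: construct a nonzero composition of $\lceil N_{0}/d\rceil-1$ morphisms, each of which is a coghost with respect to $\add(M_{d})$. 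The natural candidates come from the $t_{\simp}$-filtration of $\itLamb$ grouped into blocks of width $d$; by construction of $M_{d}$, every map out of a summand of $M_{d}$ becomes zero after traversing one such block, while the composition over $\lceil N_{0}/d\rceil-1$ consecutive blocks exhausts $\itLamb$ itself and is therefore nonzero.

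The main obstacle is the coghost step: one has to certify both that each block-map really is a coghost for $\add(M_{d})$ and that their total composition does not vanish. Representation-finiteness is the key input, because it ensures that $M_{d}$ literally contains \emph{every} indecomposable with $\LL^{t_{\simp}}\leqslant d$, so that the coghost property can be checked on an explicit finite list and no hidden short indecomposable can provide an unwanted factorization. Once this lower bound is in place, the generation time of $M_{d}$ equals $\lceil N_{0}/d\rceil-1$ exactly, placing this value in $\ospec(\itLamb)$ and proving the claimed inclusion of sets.
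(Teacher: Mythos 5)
Your lower bound in Part~(1) is fine, but the upper bound rests on a false claim. You assert that applying $t_{\simp}$ to the short exact sequence gives a short exact sequence
\[0\to t_{\simp}(L)\to t_{\simp}(M)\to t_{\simp}(N)\to 0,\]
``by a standard kernel--cokernel diagram chase.'' This is not true: $t_{\simp}$ preserves monomorphisms and epimorphisms (Lemma~\ref{lemm:t-rad 3}), but those two properties do not make it exact in the middle. The kernel of $t_{\simp}(M)\twoheadrightarrow t_{\simp}(N)$ is $L\cap t_{\simp}(M)$, and this equals $t_{\simp}(L)$ only when $\T_{\simp}$ is closed under submodules. A torsion class is closed under quotients and extensions, not submodules, and $\T_{\simp}$ genuinely fails submodule-closure here: by Lemma~\ref{lemm:basic-prop}(1), $\T_{\simp}=\{M:\top(M)\in\add(\simp')\}$, and a submodule can easily have a top in $\simp$ even when the ambient module has top in $\simp'$ (e.g.\ $\rad M$ of a uniserial $M$ whose top lies in $\simp'$ and whose second layer lies in $\simp$). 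So the induction you describe does not get off the ground. The paper avoids this entirely by working with the ideal-theoretic description $t_{\simp}(F_{t_{\simp}}^{i}(X))=X\cdot t_{\simp}(F_{t_{\simp}}^{i}(\itLamb_{\itLamb}))$ (Lemmas~\ref{lemm:t-rad 1} and~\ref{lemm:t-rad 2}); one then computes $(t_{\simp}(F_{t_{\simp}}^{\LL^{t_{\simp}}(N)}(M))+L)/L\cong t_{\simp}(F_{t_{\simp}}^{\LL^{t_{\simp}}(N)}(N))=0$, concludes $t_{\simp}(F_{t_{\simp}}^{\LL^{t_{\simp}}(N)}(M))\subseteq L$, and finishes by applying $t_{\simp}F_{t_{\simp}}^{\LL^{t_{\simp}}(L)}$. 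This ideal argument is the missing ingredient; without it your upper bound has a gap.

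In Part~(2) your generator $M_{d}$ and the counting $k=\lceil N_{0}/d\rceil$ for the upper bound match the paper's $T$ and the filtration-by-blocks argument of Theorem~\ref{thm:main 2}, but your lower bound takes a different route. You want to exhibit a nonzero $(\lceil N_{0}/d\rceil-1)$-fold $M_{d}$-coghost composite and invoke the coghost lemma; the paper instead argues by contradiction from the additive estimate $\LL^{t_{\simp}}(\itLamb)\leqslant(m-1)\LL^{t_{\simp}}(T)\leqslant(m-1)d$ (Corollary~\ref{coro:LL}), which is shorter and does not require constructing explicit coghost chains. Your coghost proposal is not obviously wrong, but as written it is vague: you do not identify the specific maps that are coghosts for $\add(M_{d})$ nor verify that their composition survives. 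Given that Corollary~\ref{coro:LL} is already a direct consequence of Part~(1), the Loewy-length-style contradiction is the cleaner and self-contained choice, and I would recommend adopting it; it also makes the logical dependence on Part~(1) explicit, which your coghost sketch hides.
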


Furthermore, we obtain the following result by using Theorem \ref{thm:1.1}, cf. Propositions  \ref{prop-An} and \ref{prop:Tm-cog}.

\begin{theorem}[{Theorem \ref{thm:main 3}}]
Let $\dA_n$ be the linearly oriented quiver with type $\A_n$, i.e.,
\[\dA_n = \xymatrix{1\ar[r]^{a_1} & 2 \ar[r]^{a_2} & \cdots \ar[r]^{a_{n-1}} & n}. \]
Then the Orlov spectrum $\ospec(A_n)$ of $A_n:=\kk\dA_n$ is $\{0,1,2,\cdots,n-1\}.$
\end{theorem}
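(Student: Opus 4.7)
The plan is to establish the equality $\ospec(A_n) = \{0, 1, \ldots, n-1\}$ via two separate inclusions, making essential use of Theorem~\ref{thm:1.1} together with the ancillary Propositions~\ref{prop-An} and~\ref{prop:Tm-cog}. The algebra $A_n = \kk\dA_n$ is hereditary and representation-finite; its indecomposables are the interval modules $[i,j]$ for $1 \leq i \leq j \leq n$, and the indecomposable projective $P(1) = [1, n]$ has Loewy length $n$, so $\LL^{t_{\simp}}(A_n) = n$. This integer is the natural scale on which the Orlov spectrum of $A_n$ lives and is the hook through which Theorem~\ref{thm:1.1} feeds into the calculation.

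For the inclusion $\{0, 1, \ldots, n-1\} \subseteq \ospec(A_n)$, representation-finiteness gives $0 \in \ospec(A_n)$ via the additive generator formed by the direct sum of all indecomposables. Applying Theorem~\ref{thm:1.1}(2) to $\LL^{t_{\simp}}(A_n) = n$ produces the subset $\{\lceil n/d \rceil - 1 : 1 \leq d < n\} \subseteq \ospec(A_n)$, which captures the endpoints $n-1$ and $1$ together with several intermediate values; however, for $n$ sufficiently large this does not exhaust $\{0, 1, \ldots, n-1\}$ (for example, $n = 10$ misses the values $5, 6, 7, 8$). The remaining gaps are closed by Proposition~\ref{prop-An}, which constructs, for each $k$ in the missing range, an explicit strong generator $G_k \in \mod A_n$ — a carefully chosen direct sum of interval modules — whose generation time equals exactly $k$.

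For the inclusion $\ospec(A_n) \subseteq \{0, 1, \ldots, n-1\}$, the plan is to show that no strong generator of $\mod A_n$ has generation time exceeding $n - 1$. The natural route uses the Loewy filtration of an arbitrary $M \in \mod A_n$, which has length at most $n$; combined with the sub-additivity of $\LL^{t_{\simp}}$ across short exact sequences from Theorem~\ref{thm:1.1}(1), this shows that once the extension closure of a strong generator reaches the simples, it reaches every module in at most $n - 1$ further extension steps. A cleaner formulation of the same bound is available through Proposition~\ref{prop:Tm-cog}, which relates generation time to $\LL^{t_{\simp}}(A_n) - 1$ directly via the coghost machinery.

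The main obstacle is the sharp calculation of the generation times of the generators $G_k$ of Proposition~\ref{prop-An}. The upper bound $t(G_k) \leq k$ is the easier half: exhibit an explicit filtration of every indecomposable by sub-quotients in $\add G_k$ of length at most $k$ and invoke Theorem~\ref{thm:1.1}(1). The matching lower bound $t(G_k) \geq k$ is the heart of the argument and is supplied by Proposition~\ref{prop:Tm-cog} through the coghost technique (dual to the ghost lemma of Lank~\cite{Lank2023}): one constructs a composition of $k$ non-zero coghost morphisms relative to $\add G_k$ whose total composition is non-zero, and this certificate forces the generation time of $G_k$ to be at least $k$. Pairing the two bounds yields $t(G_k) = k$; combined with the contributions from Theorem~\ref{thm:1.1}(2) and the additive generator, this completes the inclusion and hence the theorem.
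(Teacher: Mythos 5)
Your proposal correctly decomposes the theorem into two inclusions, but it misreads how the paper proves each, and the upper-bound argument you sketch has a genuine gap. For the lower inclusion, the paper's Proposition~\ref{prop-An} does not close gaps by constructing an explicit generator $G_k$ for each missing $k$ and certifying $\gent_{\mod A_n}(G_k) \ge k$ via coghosts. Instead it applies Theorem~\ref{thm:main 2} with $d=1$ while \emph{varying the torsion theory}: taking $\simp = \{S(1),\ldots,S(j)\}$ forces $\LL^{t_{\simp}}(A_n) = n-j$, so Theorem~\ref{thm:main 2} places $n-j-1$ in $\ospec(A_n)$ for every $1 \le j \le n-1$. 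Moreover, the sharp lower bound on generation time inside Theorem~\ref{thm:main 2}'s own proof comes from the radical-layer-length estimate Corollary~\ref{coro:LL}, not from coghosts, and Proposition~\ref{prop:Tm-cog} plays no role in the lower inclusion.

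For the upper inclusion, the Loewy-filtration sketch does not bound the generation time of an arbitrary strong generator. It shows $\gent_{\mod A_n}(\itLamb/\rad\itLamb) = n-1$ (Proposition~\ref{prop:GentTime}), but for a general strong generator $T$ the claim that ``once the extension closure reaches the simples it reaches everything in $n-1$ more steps'' would at best give $\gent_{\mod A_n}(T) \le kn$ if $[T]_k$ first contains the simples, which is not the required bound $\le n-1$. The paper's Proposition~\ref{prop:max-ospecAn} argues differently: if $\gent_{\mod A_n}(T) = y \ge n$, the Coghost Lemma~\ref{coghostlemm} produces a nonzero composition of $y$ $T$-coghost morphisms; each factor is forced into the radical of the Auslander algebra $A_n^{\Aus}$ (using Proposition~\ref{f_is_not_coghost}); and Lemma~\ref{lemm:An-compo=0} shows that any composition of $n$ radical morphisms in $A_n^{\Aus}$ vanishes, a contradiction. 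This radical-nilpotency argument on the Auslander algebra is the essential ingredient your proposal is missing, and it is not what Proposition~\ref{prop:Tm-cog} supplies: that proposition merely catalogues the irreducible $T_m$-coghosts and is not invoked in the final upper-bound proof.
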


\section{Preliminaries}

Let $\mathcal{A}$ be an Abelian category. In this paper, we assume that
all subcategories of $\mathcal{A}$ are full, additive and closed under isomorphisms and all functors between categories are additive.
For a subclass $\mathcal{U}$ of $\mathcal{A}$, we use $\add \mathcal{U}$ to
denote the subcategory of $\mathcal{A}$ consisting of
direct summands of finite direct sums of objects in $\mathcal{U}$,
\checks{and use $\langle\mathcal{U}\rangle$ to denote the subcategory of $\mathcal{A}$ containing all objects lying in $\mathcal{U}$}.
For two objects $X$ and $Y$ in $\mathcal{A}$, we denote by $X\le Y$ if $X$ is a subobject of $Y$, and denote by $X\le_{\oplus} Y$ if $X$ is a direct summand of $Y$.
Let $\NN$ be the set of natural numbers and \checks{$\mathbb{N}^{+}=\NN\backslash\{0\}$}.

\subsection{The extension dimension of Abelian category}

Let $\mathcal{U}_1,\mathcal{U}_2,\cdots,\mathcal{U}_n$ be subcategories of $\mathcal{A}$. Define
\begin{align*}
   \mathcal{U}_1\multi \mathcal{U}_2
:= {\add}\{X\in \mathcal{A}\mid & {\rm there \;exists \;an\; sequence \;}
   0\rightarrow U_1\rightarrow  X \rightarrow U_2\rightarrow 0\  \\
& {\rm in}\ \mathcal{A}\ {\rm with}\; U_1 \in \mathcal{U}_1 \;{\rm and}\; U_2 \in \mathcal{U}_2\}.
\end{align*}
For any subcategories $\mathcal{U},\V$ and $\mathcal{W}$ of $\mathcal{A}$, by \cite[Proposition 2.2]{dao2014radius} we have
$$(\mathcal{U}\multi\V)\multi\mathcal{W}=\mathcal{U}\multi(\V\multi\mathcal{W}).$$
Inductively, we can define
\begin{align*}
\mathcal{U}_{1}\multi  \mathcal{U}_{2}\multi \dots \multi\mathcal{U}_{n}:=
\add \{X\in \mathcal{A}\mid {\rm there \;exists \;an\; sequence}\
0\rightarrow U\rightarrow  X \rightarrow V\rightarrow 0  \\{\rm in}\ \mathcal{A}\ {\rm with}\; U \in \mathcal{U}_{1} \;{\rm and}\;
V \in  \mathcal{U}_{2}\multi \dots \multi\mathcal{U}_{n}\}.
\end{align*}
For a subcategory $\mathcal{U}$ of $\mathcal{A}$, set
$[\mathcal{U}]_{0}=0$, $[\mathcal{U}]_{1}=\add\mathcal{U}$,
$[\mathcal{U}]_{n}=[\mathcal{U}]_1\multi [\mathcal{U}]_{n-1}$ for each $n\geqslant 2$,
and $[\mathcal{U}]_{\infty}=\mathop{\bigcup}_{n\geqslant 0}[\mathcal{U}]_{n}$, cf. \cite{beligiannis2008some}.

\begin{example} \rm \label{exp:A4}
Notice that $[\mathcal{U}]_{n}$ and $[\mathcal{U}]_{n-1}\multi [\mathcal{U}]_{n-1}$ are generally not equal. 
To avoid this misunderstanding, we provide this example by using
\begin{center}
  $A_4 = \kk\Q$ with $\Q = \xymatrix{1 \ar[r] & 2 \ar[r] & 3 \ar[r] & 4}$.
\end{center}
Take $S = S(1)\oplus S(2) \oplus S(3) \oplus S(4)$ be the direct sum of simple $A_4$-modules $S(1)$, $S(2)$, $S(3)$, and $S(4)$, and define $[\mathcal{U}]_1 = \add S$.
Then $[\mathcal{U}]_2 = [\mathcal{U}]_1\multi [\mathcal{U}]_1$ consist of all indecomposable $A_4$-modules with dimensions $\leqslant 3$, that is,
\[ [\mathcal{U}]_2 = \{ S(1), S(2), S(3), S(4),
\big({_4^3}\big), \big({_3^2}\big), \big({_2^1}\big) \}. \]
For $[\mathcal{U}]_3$, a customary misunderstanding is that $[\mathcal{U}]_3 = [\mathcal{U}]_2\multi [\mathcal{U}]_2$.
Indeed, by the definition of $[\mathcal{U}]_{n}=[\mathcal{U}]_1\multi [\mathcal{U}]_{n-1}$ and $P(1) \in [\mathcal{U}]_2\multi [\mathcal{U}]_2$, we have
\begin{align*}
    [\mathcal{U}]_3
& = [\mathcal{U}]_1 \multi [\mathcal{U}]_2 = [\mathcal{U}]_2 \multi [\mathcal{U}]_1 \\
& = \Big\{ S(1), S(2), S(3), S(4),
       \big({_4^3}\big), \big({_3^2}\big), \big({_2^1}\big),
       \Big(\begin{smallmatrix} 1\\ 2\\ 3 \end{smallmatrix}\Big),
       \Big(\begin{smallmatrix} 2\\ 3\\ 4 \end{smallmatrix}\Big) \Big\} \\
& (\ne [\mathcal{U}]_2\multi [\mathcal{U}]_2)
\end{align*}
whose elements are the objects to be obtained by the extensions of
a module lying in $[\mathcal{U}]_1$ and a module lying in $[\mathcal{U}]_2$. 
We have $P(1) \notin [\mathcal{U}]_3$. Similarly, one can check that
\[ [\mathcal{U}]_4 = [\mathcal{U}]_1\multi [\mathcal{U}]_3 = [\mathcal{U}]_2 \multi [\mathcal{U}]_2 = [\mathcal{U}]_3 \multi [\mathcal{U}]_1 = \mod A_4 \]
because $P(1)  \in [\mathcal{U}]_4$.
\end{example}

The following two lemmas are useful for this paper.

\begin{lemma}[\!{\cite[Lemma 2.3]{zheng2019extension}}] \label{lem2.4a}
Let $\mathcal{A}$ be an Abelian category and
$T_{1},T_{2}\in \mathcal{A}$ and $m,n\in\mathbb{N}$.
Then

$(1)$ $[T_{1}]_{m}\multi [T_{2}]_{n}\subseteq [T_{1}\oplus T_{2}]_{m+n};$

$(2)$
$[T_{1}]_{m}\oplus  [T_{2}]_{n}\subseteq [T_{1}\oplus T_{2}]_{\max\{m,n\}}.$
\end{lemma}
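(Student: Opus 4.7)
The plan is to prove both inclusions by exploiting only the monotonicity of $\multi$ together with the associativity of $\multi$ already recorded in the excerpt from \cite{dao2014radius}. My first step would be to record a \emph{monotonicity lemma}: whenever $\mathcal{U}_i \subseteq \mathcal{U}_i'$ for $i=1,2$, one has $\mathcal{U}_1 \multi \mathcal{U}_2 \subseteq \mathcal{U}_1' \multi \mathcal{U}_2'$, which is immediate from the defining short exact sequence. Combined with associativity and a straightforward induction on $k$, this yields two handy consequences: $[T]_k$ equals the $k$-fold $\multi$-product of $[T]_1$ with itself for every $k\geqslant 1$; and $T\leqslant_\oplus T'$ implies $[T]_k\subseteq [T']_k$ for every $k$.

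For part (1), I would first dispose of the boundary case $m=0$ (and dually $n=0$): since $[T_1]_0=0$, the extension defining $[T_1]_0 \multi [T_2]_n$ forces the left-hand term to be zero and hence $X\cong U_2$, giving $[T_1]_0\multi [T_2]_n = [T_2]_n \subseteq [T_1\oplus T_2]_n = [T_1\oplus T_2]_{m+n}$ by the second consequence above. When $m,n\geqslant 1$, I would unfold both factors using associativity and then apply the monotonicity lemma, using $[T_i]_1 \subseteq [T_1\oplus T_2]_1$:
\[
[T_1]_m \multi [T_2]_n
= \underbrace{[T_1]_1 \multi \cdots \multi [T_1]_1}_{m}\multi \underbrace{[T_2]_1 \multi \cdots \multi [T_2]_1}_{n}
\subseteq \underbrace{[T_1\oplus T_2]_1 \multi \cdots \multi [T_1\oplus T_2]_1}_{m+n}
= [T_1\oplus T_2]_{m+n}.
\]

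Part (2) then drops out as a quick corollary. From the monotonicity in the generator $[T_1]_m \subseteq [T_1\oplus T_2]_m$ and the trivial filtration $[T_1\oplus T_2]_k \subseteq [T_1\oplus T_2]_{k+1}$, one obtains $[T_1]_m\subseteq [T_1\oplus T_2]_{\max\{m,n\}}$, and similarly $[T_2]_n \subseteq [T_1\oplus T_2]_{\max\{m,n\}}$; since the target subcategory is closed under finite direct sums (being an $\add$-closed class), it contains every object of the form $X_1\oplus X_2$ with $X_i\in [T_i]_{\ast}$, which is exactly the content of $[T_1]_m \oplus [T_2]_n$.

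I do not foresee any serious obstacle here. The only care points are to state the monotonicity of $\multi$ explicitly from the definition, and to keep the boundary cases $m=0$ or $n=0$ separate from the main inductive unfolding in part (1), where the factorisation $[T]_k = [T]_1 \multi \cdots \multi [T]_1$ would otherwise be vacuous.
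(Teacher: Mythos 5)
Your argument is correct. Note that the paper itself does not reprove this lemma --- it is cited verbatim from \cite[Lemma~2.3]{zheng2019extension} --- so there is no in-paper proof to measure against, but your approach is the standard one and the details check out. The two ingredients you rely on are both available: associativity of $\multi$ is quoted from \cite[Proposition~2.2]{dao2014radius}, and your monotonicity observation ($\mathcal{U}_i\subseteq\mathcal{U}_i'$ implies $\mathcal{U}_1\multi\mathcal{U}_2\subseteq\mathcal{U}_1'\multi\mathcal{U}_2'$, and hence $\mathcal{U}\subseteq\mathcal{V}$ implies $[\mathcal{U}]_n\subseteq[\mathcal{V}]_n$) is the same fact the paper later records in Remark~\ref{inc-Sub}. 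The identity $[T]_k = [T]_1\multi\cdots\multi[T]_1$ ($k$ factors, $k\geqslant 1$) does follow by an easy induction from the recursive definition together with associativity, and once you have it, the factor-by-factor estimate $[T_i]_1\subseteq[T_1\oplus T_2]_1$ gives part~(1) directly. Your separate treatment of $m=0$ or $n=0$ is necessary and handled correctly: the extension $0\to 0\to X\to U_2\to 0$ forces $X\cong U_2$, so $[T_1]_0\multi[T_2]_n=[T_2]_n$. For part~(2), the reduction to $[T_i]_{\,\cdot}\subseteq[T_1\oplus T_2]_{\max\{m,n\}}$ followed by closure of $[T_1\oplus T_2]_{\max\{m,n\}}$ under finite direct sums (being built by $\add$) is exactly right.
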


\begin{lemma}[\!{\cite[Lemma 2.4]{zheng2019extension}}] \label{lem2.4}
 Let $F:\mathcal{A}\to \mathcal{B}$ be an exact functor of Abelian categories.
 Then $F([T]_{n})\subseteq [F(T)]_{n}$ for any $T\in\mathcal{A}$ and positive integer $n\geqslant 1.$
\end{lemma}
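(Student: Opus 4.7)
The natural approach is induction on $n$, since the class $[T]_n$ is defined recursively by $[T]_n = [T]_1 \multi [T]_{n-1}$, and the hypothesis on $F$ (exactness, hence additivity) is tailor-made to commute with both operations involved, namely $\add$ and short exact sequences.

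For the base case $n=1$, I would simply observe that $[T]_1 = \add T$, so any $X \in [T]_1$ is a direct summand of some $T^{\oplus k}$. Additivity of $F$ yields that $F(X)$ is a direct summand of $F(T)^{\oplus k}$, whence $F(X) \in \add F(T) = [F(T)]_1$.

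For the inductive step, assuming $F([T]_{n-1}) \subseteq [F(T)]_{n-1}$, I would pick $X \in [T]_n$ and unwind the definition: $X$ is a direct summand of some $Y$ fitting into a short exact sequence
\[ 0 \longrightarrow U \longrightarrow Y \longrightarrow V \longrightarrow 0 \]
with $U \in [T]_1$ and $V \in [T]_{n-1}$. Applying the exact functor $F$ produces a short exact sequence
\[ 0 \longrightarrow F(U) \longrightarrow F(Y) \longrightarrow F(V) \longrightarrow 0 \]
in $\mathcal{B}$. By the base case, $F(U) \in [F(T)]_1$, and by the induction hypothesis, $F(V) \in [F(T)]_{n-1}$. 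Therefore $F(Y) \in [F(T)]_1 \multi [F(T)]_{n-1} = [F(T)]_n$. Since $X \le_{\oplus} Y$ and $[F(T)]_n$ is closed under direct summands (the $\add$ built into its definition), we conclude $F(X) \in [F(T)]_n$, completing the induction.

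I do not anticipate any real obstacle: the argument is a direct unwinding of the recursive definition, and the only point requiring a moment of care is to keep track of the direct-summand closure throughout, using that $F$ being additive sends direct summands to direct summands and that each $[\mathcal{U}]_n$ is closed under $\add$ by construction.
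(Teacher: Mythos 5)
Your proof is correct, and it is the natural argument by induction on $n$, using exactness to transport the defining short exact sequences and additivity to handle direct summands. The paper does not reproduce a proof here (it cites \cite[Lemma 2.4]{zheng2019extension}), but your argument is exactly the expected one; the only small point worth spelling out is that when $X\in[T]_n=\add\{\dots\}$ you may a priori only have $X\le_\oplus \bigoplus_i Y_i$ with each $Y_i$ sitting in such a sequence, and one should note that the direct sum $\bigoplus_i Y_i$ again sits in a short exact sequence with outer terms in $[T]_1$ and $[T]_{n-1}$, since both of these are closed under finite direct sums.
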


\begin{definition}\label{def2.1} \rm
Let $X$ be an object in $\mathcal{A}$. The generation time of $X$,
denoted $\gent_{\mathcal{A}}(X)$ is
\[\gent_{\mathcal{A}}(X) :=
\begin{cases}
  \inf\{n\in\NN \mid \mathcal{A}=[X]_{n+1}\},
& \text{if } [X]_t=\mathcal{A} \text{ for some } t\in\NN; \\
  +\infty,
& \text{otherwise}.
\end{cases} \]
The object $X$ is called a {\defines strong generator} of $\mathcal{A}$ if $\gent_{\mathcal{A}}(X)$ is finite.
\end{definition}

\begin{example} \label{exp:A4-gener time} \rm
Consider the algebra $A_4$ given in Example \ref{exp:A4} and keep the notations from this example.
Then we have $\gent_{\mod A_4}(S) = 4-1 = 3 < +\infty$. Thus, $S$ is a strong generator of $\mod A_4$.
\end{example}

\begin{lemma}\label{time1}
For an object $X$ in $\mathcal{A}$ with $\gent_{\mathcal{A}}(X)<+\infty$, we have
\[\gent_{\mathcal{A}}(X)\geqslant \gent_{\mathcal{A}}(X\oplus Y)\]
holds for arbitrary object $Y$ in $\mathcal{A}$.
\end{lemma}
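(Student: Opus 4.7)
The plan is to use monotonicity of the operator $[-]_k$ with respect to enlarging the generating object, combined directly with the definition of $\gent_{\mathcal{A}}$. Set $n := \gent_{\mathcal{A}}(X)$, which is finite by hypothesis, so that $\mathcal{A} = [X]_{n+1}$. The goal reduces to showing $\mathcal{A} = [X\oplus Y]_{n+1}$, since this would give $\gent_{\mathcal{A}}(X\oplus Y) \leqslant n$.

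First I would establish the containment $[X]_k \subseteq [X\oplus Y]_k$ for every $k \geqslant 1$ by induction on $k$. The base case $k=1$ is immediate: $\add X \subseteq \add(X\oplus Y)$ because any summand of a finite direct sum of copies of $X$ is also a summand of a finite direct sum of copies of $X\oplus Y$. For the inductive step, using the recursive definition $[X]_k = [X]_1 \multi [X]_{k-1}$, any short exact sequence $0\to U_1\to Z\to U_2\to 0$ with $U_1\in[X]_1$ and $U_2\in[X]_{k-1}$ automatically has $U_1\in[X\oplus Y]_1$ and $U_2\in[X\oplus Y]_{k-1}$ by the inductive hypothesis, so $Z\in[X\oplus Y]_k$; taking $\add$-closure preserves this.

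With the containment in hand, apply it at level $k = n+1$: we obtain
\[\mathcal{A} = [X]_{n+1} \subseteq [X\oplus Y]_{n+1} \subseteq \mathcal{A},\]
so $[X\oplus Y]_{n+1} = \mathcal{A}$. By definition of generation time, this yields $\gent_{\mathcal{A}}(X\oplus Y) \leqslant n = \gent_{\mathcal{A}}(X)$.

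There is no real obstacle here; the statement is essentially a formal monotonicity property of the $[-]_k$-filtration. The only point deserving a brief verification is the inductive clause, which relies on the associativity (or equivalently the unambiguous recursive definition) of $\multi$ recalled before Example \ref{exp:A4} and on the fact that $\add$ is monotone under inclusion of subcategories.
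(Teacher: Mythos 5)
Your proof is correct and follows essentially the same route as the paper: establish the monotonicity $[X]_k\subseteq[X\oplus Y]_k$ by induction from the base case $\add X\subseteq\add(X\oplus Y)$, then read off the inequality on generation times from Definition \ref{def2.1}. There is no meaningful difference in approach.
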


\begin{proof}
By $\add X = [X]_{1}\subseteq [X\oplus Y]_{1} = \add (X\oplus Y)$ and induction, we have
\begin{align}\label{formula:time1}
  [X]_{n+1}\subseteq [X\oplus Y]_{n+1}
\end{align}
since, for any integer $m\geqslant 0$, $[X]_{m}\subseteq [X\oplus Y]_{m}$ yields
\begin{align*}
  [X]_{m+1} = [X]_{1}\multi [X]_{m} \subseteq [X\oplus Y]_{1} \multi [X\oplus Y]_{m} = [X\oplus Y]_{m+1}.
\end{align*}
Furthermore, (\ref{formula:time1}) follows
\[ \gent_{\mathcal{A}}(X) = \inf \{n\in\NN \mid [X]_{n+1}=\mathcal{A} \}
\geqslant \inf \{ n\in\NN \mid [X\oplus Y]_{n+1}=\mathcal{A} \} = \gent_{\mathcal{A}}(X\oplus Y) \]
by Definition \ref{def2.1} in the case for $\gent_{\mathcal{A}}(X)<+\infty$.
\end{proof}

\begin{definition} \rm \label{def:OSpec}
The {\defines Orlov spectrum} $\ospec(\mathcal{A})$ of an Abelian category $\mathcal{A}$ is defined as
\[\ospec(\mathcal{A})=\{ \gent_{\mathcal{A}}(X)\;|\;X\text{ is a strong generator of $\mathcal{A}$}\}. \]
The {\defines extension dimension} (\!\!\cite{beligiannis2008some}) of $\mathcal{A}$ is defined as the infimum of $\ospec(\mathcal{A})$,
that is,
$$\extdim \mathcal{A}:=\inf\ospec(\mathcal{A}).$$
Dually, we can define the {\defines ultimate dimension}  of $\mathcal{A}$ is defined as the supremum of $\ospec(\mathcal{A})$,
that is,
$$\udim \mathcal{A}:=\sup\ospec(\mathcal{A}).$$
For Artin algebra $\itLamb$, we set
$\ospec(\itLamb):=\ospec(\mod\itLamb)$, and $\udim(\itLamb):=\udim(\mod\itLamb)$.
\end{definition}

\begin{lemma}\label{belongtoOrlov}
If $[T]_{n+1}=\mathcal{A}\neq[T]_{n}$, then $n\in \ospec(\mathcal{A})$.
\end{lemma}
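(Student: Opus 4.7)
The plan is to unpack the definition of $\gent_{\mathcal{A}}(T)$ and show that the hypotheses force this infimum to equal $n$ exactly, so that $T$ is a strong generator contributing $n$ to $\ospec(\mathcal{A})$.

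The key preliminary step I would carry out first is to verify the monotonicity chain
\[ [T]_1 \subseteq [T]_2 \subseteq \cdots \subseteq [T]_m \subseteq [T]_{m+1} \subseteq \cdots \]
This is obtained by noting that the zero object lies in $\add T = [T]_1$, and for any $X \in [T]_m$ the trivial short exact sequence $0 \to 0 \to X \to X \to 0$ witnesses $X \in [T]_1 \multi [T]_m = [T]_{m+1}$. So the sequence of subcategories $[T]_m$ is increasing in $m$.

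Given this, I would argue as follows. The hypothesis $[T]_{n+1} = \mathcal{A}$ shows in particular that $[T]_t = \mathcal{A}$ for some $t \in \NN$, so by Definition~\ref{def2.1} the generation time $\gent_{\mathcal{A}}(T)$ is the genuine infimum
\[ \gent_{\mathcal{A}}(T) = \inf\{m \in \NN \mid [T]_{m+1} = \mathcal{A}\}, \]
and this infimum is at most $n$. Suppose for contradiction that $\gent_{\mathcal{A}}(T) = k < n$; then $[T]_{k+1} = \mathcal{A}$, and by the monotonicity chain established above (applied to the inclusions from index $k+1$ up to $n$) we would obtain $\mathcal{A} = [T]_{k+1} \subseteq [T]_n$, forcing $[T]_n = \mathcal{A}$, which contradicts the standing assumption $[T]_n \neq \mathcal{A}$. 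Hence $\gent_{\mathcal{A}}(T) = n$.

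Finally, since $\gent_{\mathcal{A}}(T) = n$ is finite, $T$ is a strong generator of $\mathcal{A}$ in the sense of Definition~\ref{def2.1}, and therefore $n = \gent_{\mathcal{A}}(T) \in \ospec(\mathcal{A})$ by Definition~\ref{def:OSpec}. There is no serious obstacle here; the only subtlety is that one must check the monotonicity $[T]_m \subseteq [T]_{m+1}$, which is not explicitly stated earlier in the paper but follows immediately from the definition once one allows the zero object as an element of $[T]_1$.
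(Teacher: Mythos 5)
Your proof is correct and follows essentially the same route as the paper: both use the monotonicity chain $[T]_1 \subseteq [T]_2 \subseteq \cdots$ together with $[T]_n \neq \mathcal{A}$ and $[T]_{n+1} = \mathcal{A}$ to conclude $\gent_{\mathcal{A}}(T) = n$. You have simply spelled out the monotonicity step (via the trivial short exact sequence $0 \to 0 \to X \to X \to 0$) that the paper takes for granted.
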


\begin{proof}
Since $[T]_{1}\subseteq [T]_{2}\subseteq \cdots\subseteq [T]_{n}\neq \mathcal{A}$
and $[T]_{n+1}= \mathcal{A}$, we have $\gent_{\mathcal{A}}(T)=n$.
It follows $n\in \ospec(\mathcal{A})$ by the definition of Orlov spectrum (see Definition \ref{def:OSpec}).
\end{proof}

\begin{example} \label{exp:A4-ospec} \rm
Consider the algebra $A_4$ given in Example \ref{exp:A4} and keep the notations form this example.
By Example \ref{exp:A4-gener time}, we have $3 \in \ospec(A_4)$.
Moreover, by Lemma \ref{belongtoOrlov}, we can also prove $3 \in \ospec(A_4)$
since $[S]_4 = \mod A_4 \ne [S]_3$ holds.
\end{example}

\subsection{Ghosts and coghosts}
This section contains two parts as follows: in the first part we recall contravariantly finite subcategories and covariantly finite subcategories of Abelian categories;
and in the second part we recall ghosts introduced by Beligiannis (see \cite{beligiannis2008some}),
\checks{and coghosts introduced by Lank (see \cite[Definition 4.5]{Lank2023})}.
\checks{The properties of coghosts are similar to that of ghosts.
However, for the convenience of reading, we still provide detailed proofs for properties of coghosts in our paper.}

\subsubsection{Contravariantly finite subcategories and covariantly finite subcategories}
Let $\mathcal{A}$ be an Abelian category and $\mathcal{C}$ be a subcategory of $\mathcal{A}$ in this section.
A map $f:C\to X$ in $\mathcal{A}$ with $C\in\mathcal{C}$
is called {\defines a right $\mathcal{C}$-approximation} of $X$ if for every
$f':C'\to X$ with $C'\in\mathcal{C}$ there exists a map $g: C'\to C$
such that the following diagram
\[\xymatrix{
& C'\ar@{-->}[ld]_{g} \ar[d]^{f'}  \\
  C\ar[r]_{f} & X   \\
 }\]
commutes, that is, $f'=f\circ g$.
If for every $X\in \mathcal{A}$ admits a right $\mathcal{C}$-approximation, then $\mathcal{C}$ is called {\defines contravariantly finite}.
Dually, a map $f: X \to C$ in $\mathcal{A}$ with $C\in\mathcal{C}$
is called a {\defines left $\mathcal{C}$-approximation} of $X$ if for every
$f': X \to C' $ with $C'\in\mathcal{C}$ there exists a map $g: C\to C'$
such that the following diagram
\[\xymatrix{
X\ar[r]^{f}  \ar[d]_{f'}& C\ar@{-->}[ld]^{g}&  \\
C'& \\
 }\]
commutes, that is, $f'=f\circ g$. If for every $X\in \mathcal{A}$ admits a left $\mathcal{C}$-approximation, then $\mathcal{C}$ is called {\defines covariantly finite}.

\subsubsection{Ghost lemma and Coghost lemma}
Let $T$ be an object in an Abelian category $\mathcal{A}$.

\begin{definition}[\!\!{\cite[Definition 1.2]{beligiannis2008some}, \cite[Definition 4.5]{Lank2023}}] \rm
A morphism $f:Y\to A$ (resp. $f:A\to Y$) in $\mathcal{A}$ is called a {\defines $T$-ghost} (resp. {\defines $T$-coghost}) if the induced map $\Hom_{\mathcal{A}}(T',f)$ (resp. $\Hom_{\mathcal{A}}(f,T')$) is zero for each $T'\in\add T$.
Furthermore, if a map can be written a composition of $n$ $T$-coghost maps, then it is called an {\defines $n$-fold ghost} (resp. {\defines $n$-fold coghost}).
\end{definition}

\begin{notation} \rm
For a set $S$ of some objects lying in an Abelian category $\mathcal{A}$, we use $\langle S\rangle$ be the minimal full subcategory containing $S$.
For any $n\in\NN^+$, we set
\begin{itemize}
  \item[(1)] $\Gh^{n}_{T}(X,Y) := \{ f\in\Hom_{\mathcal{A}}(X, Y) \mid
      f\text{ is an } n\text{-fold } T\text{-coghost map} \}$,
  \item[(2)] $\Gh^n_T(-,Y) := \big\langle \Gh^{n}_{T}(X,Y) \mid X\in\Obj(\mathcal{A}) \big\rangle$,
  \item[(3)] and $\Gh^n_T(X,-) := \big\langle \Gh^{n}_{T}(X,Y) \mid Y\in\Obj(\mathcal{A}) \big\rangle$.
\end{itemize}
\end{notation}

\begin{remark} \label{rmk:ideal} \rm
It is easy to see that $\Gh^{n}_{T}(X,Y)$ is an ideals of $\mathcal{A}$, that is,
for any $T$-coghost $f:X\to Y$ and arbitrary homomorphism $g:Y\to Z$ (resp., $g:Z\to X$),
the composition $g \circled f$ (resp., $f\circled g$) is also a $T$-coghost.
\end{remark}

Recall that the {\defines kernel} of a morphism $f:X\to Y$ in an Abelian category $\mathcal{A}$ is the morphism $\eta: K \to X$ from an object $K$ in $\mathcal{A}$ to $X$ such that
\begin{itemize}
  \item[(1)] $f\circled\eta = 0$;
  \item[(2)] and, for any morphism $\eta':K'\to X$ satisfying $f\circled \eta'$, there is a unique morphism $\tau: K'\to K$ such that $\eta'=\eta\circled\tau$.
\end{itemize}
One can check that $K$ is unique up to isomorphism. In most cases, $K$ is written as $\Ker f$.

\begin{lemma} \label{coghost}
The kernel of any left $\mathcal{C}$-approximation in $\mathcal{A}$ is a $\mathcal{C}$-coghost.
\end{lemma}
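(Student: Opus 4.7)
The plan is to unwind the definitions directly: the statement follows immediately from combining the universal property of the left $\mathcal{C}$-approximation with the defining property of the kernel. So my proof will take only a few lines, but I will lay out the logical steps carefully.

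First I would fix a left $\mathcal{C}$-approximation $f : X \to C$ and let $\eta : K \to X$ denote its kernel, so that by definition $f \circled \eta = 0$. To show that $\eta$ is a $\mathcal{C}$-coghost, I need to verify that for every $C' \in \mathcal{C}$ and every morphism $h : X \to C'$, the composition $h \circled \eta$ vanishes, i.e.\ $\Hom_{\mathcal{A}}(\eta, C') = 0$.

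The second step exploits the approximation property: because $f : X \to C$ is a left $\mathcal{C}$-approximation and $h : X \to C'$ is a morphism into an object of $\mathcal{C}$, there exists $g : C \to C'$ with $h = g \circled f$. Then
\[
h \circled \eta \;=\; (g \circled f) \circled \eta \;=\; g \circled (f \circled \eta) \;=\; g \circled 0 \;=\; 0,
\]
which establishes that $\eta$ is indeed a $\mathcal{C}$-coghost. Since $C'$ and $h$ were arbitrary, the conclusion follows.

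There is essentially no obstacle here; the only point to watch is the convention on composition order and making sure the ``$\mathcal{C}$-coghost'' terminology in the lemma is read as the natural generalization of the $T$-coghost notion, where the test objects run over all of $\mathcal{C}$ rather than only $\add T$. With that reading the argument is purely formal and uses nothing beyond the universal properties of kernel and approximation.
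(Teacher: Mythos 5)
Your proof is correct and follows essentially the same route as the paper's: factor an arbitrary $h:X\to C'$ through the approximation $f$ via the universal property, then conclude $h\circled\eta = g\circled(f\circled\eta)=0$. The only differences are cosmetic (naming of objects and morphisms).
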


\begin{proof}
Assume that $f:X\to T$ is a left $\mathcal{C}$-approximation and $\eta:\Ker f \to X$ its kernel.
Here, $T$ is an object lying in $\mathcal{C}$.
For each $h\in\Hom_{\mathcal{A}}(X,T')$, there exists a map $f':T\to T'$ such that
$T'\in\mathcal{C}$ and $h=f'\circled f$ since $f$ is a left $\mathcal{C}$-approximation.
Thus, we obtain the following diagram
\[\xymatrix{
        0\ar[r]& {\rm Ker} f\ar[r]^-{\eta}\ar[d]_{h\circled \eta}&  X\ar[r]^-{f}  \ar[ld]_-{h}& T\ar@{-->}[lld]^-{f'}   \\
               &T'                      &&  \\
 }\]
commutes.
Then
\[h\circled \eta=(f'\circled f)\circled \eta=f'\circled (f\circled \eta)=f'\circled (f\circled \eta)=f'\circled 0 =0.\]
It follows that $\eta:\Ker f \to X $ is a $\mathcal{C}$-coghost by the definition of coghost.
\end{proof}

Let $\Fac(T)$ (resp. $\Sub(T)$) be the subcategory of $\mathcal{A}$ whose object is isomorphic to a quotient (resp. subobject) of some direct sum $T^{\oplus I}$, where $I$ is a finite index set.
The following lemma, i.e., Ghost Lemma, is established by Beligiannis in \cite{beligiannis2008some}.

\begin{lemma}[{Ghost Lemma \cite[Lemma 1.3]{beligiannis2008some}}] \label{ghostlemm} \
\begin{itemize}
  \item[\rm(a)]
    If $X\in[\Fac(T)]_{n}$, then
    $\Gh^{n}_{T}(X,-)=0$.
  \item[\rm(b)]
    If $\add T$ is contravariantly finite in $\mathcal{A}$, then the following
    are equivalent$:$
    \begin{itemize}
      \item[\rm(1)] $\Gh^{n}_{T}(X,-)=0;$
      \item[\rm(2)] $X\in[\Fac(T)]_{n}.$
    \end{itemize}
    \end{itemize}
\end{lemma}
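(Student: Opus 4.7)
My plan is to prove parts (a) and (b) separately, with (a) by induction on $n$ exploiting the recursion $[\Fac(T)]_n = [\Fac(T)]_1 \multi [\Fac(T)]_{n-1}$, and (b) by using contravariant finiteness to manufacture a canonical $n$-fold $T$-ghost whose vanishing witnesses $X \in [\Fac(T)]_n$. For the base case $n=1$ of (a), $X \in \add\Fac(T)$ is a direct summand of some $Q \in \Fac(T)$, and $Q$ receives an epimorphism $\pi: T^{\oplus I} \twoheadrightarrow Q$ with $T^{\oplus I} \in \add T$; any $T$-ghost $f: X \to A$ kills each summand of $T^{\oplus I}$ in the composite $f \circ p \circ \pi$ (where $p: Q \twoheadrightarrow X$ is the split projection), and since $p \circ \pi$ is epic we obtain $f = 0$.

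For the inductive step of (a), I would write $X \leqslant_\oplus Y$ with an exact sequence $0 \to U \to Y \to V \to 0$, $U \in [\Fac(T)]_1$, $V \in [\Fac(T)]_{n-1}$, and reduce to showing that every $n$-fold ghost out of $Y$ vanishes. Given such $\alpha_n \circ \cdots \circ \alpha_1$, the restriction $\alpha_1|_U$ remains a $T$-ghost by the ideal property (Remark~\ref{rmk:ideal}) and vanishes by the base case, so $\alpha_1$ factors through $q: Y \twoheadrightarrow V$ as $\bar\alpha_1 \circ q$. Then $\alpha_n \circ \cdots \circ \alpha_2 \circ \bar\alpha_1$ is an $(n-1)$-fold $T$-ghost out of $V \in [\Fac(T)]_{n-1}$ (again absorbing $\bar\alpha_1$ into $\alpha_2$ via the ideal property), which vanishes by induction, and composing with $q$ concludes.

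For part (b), direction (2)$\Rightarrow$(1) is exactly (a); for (1)$\Rightarrow$(2), I would set $X_0 := X$ and iteratively choose right $\add T$-approximations $f_i: T_i \to X_i$, define $X_{i+1} := \CoKer(f_i)$ with canonical epimorphism $g_i: X_i \twoheadrightarrow X_{i+1}$, and observe that each $g_i$ is a $T$-ghost since every morphism $T' \to X_i$ with $T' \in \add T$ factors through $f_i$ and hence through $g_i \circ f_i = 0$. The composite $\gamma := g_{n-1} \circ \cdots \circ g_0$ is therefore an $n$-fold $T$-ghost, forced to zero by hypothesis. Setting $\tilde Y_i := \Ker(g_i \circ \cdots \circ g_0)$ produces a filtration $0 = \tilde Y_{-1} \subseteq \tilde Y_0 \subseteq \cdots \subseteq \tilde Y_{n-1} \subseteq X$ whose successive quotients identify (via the snake lemma) with $\Im(f_i) \in \Fac(T)$, and $\gamma = 0$ forces $\tilde Y_{n-1} = X$. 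The main obstacle lies here: converting this filtration into genuine membership $X \in [\Fac(T)]_n$ requires an inductive argument placing each $\tilde Y_i$ in $[\Fac(T)]_{i+1}$ via the associativity of $\multi$ cited after its definition, and carefully executing the successive pullbacks needed to identify the subquotients.
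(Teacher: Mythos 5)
Your argument is correct and is exactly the dual of the paper's written proof of the Coghost Lemma (Lemma~\ref{coghostlemm}); since the paper cites the Ghost Lemma from Beligiannis without giving a direct proof and offers the Coghost Lemma proof in its place, this is precisely the paper's intended approach. The minor reorganizations you make---in (a) you dispose of the rank-one subobject $U$ first via the base case and then recurse on $V$, whereas the Coghost Lemma proof applies the inductive hypothesis to the quotient piece first and kills the innermost coghost against the mono into $\add T$ last; in (b) you package the approximation tower as an increasing filtration $\tilde Y_i=\Ker(g_i\circ\cdots\circ g_0)$ rather than the paper's decreasing chain of kernels---are cosmetic and equally valid, and the step you flag as an ``obstacle'' is the same routine chain of $\multi$-inclusions (using that $[\Fac(T)]_i\multi[\Fac(T)]_1=[\Fac(T)]_{i+1}$, which follows from the cited associativity) that the paper writes out for the coghost version, so there is no genuine gap.
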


Next, we provide a dual result, say Coghost Lemma, for Ghost Lemma \checks{which is first shown by Lank in \cite[Lemma 4.6]{Lank2023}.}

\begin{lemma}[Coghost Lemma] \label{coghostlemm} \
\begin{itemize}
  \item[\rm(a)]
    If $Y\in[\Sub(T)]_{n}$, then
    $\Gh^{n}_{T}(-,Y)=0$.
  \item[\rm(b)]
    If $\add T$ is covariantly finite in $\mathcal{A}$, then the following
    are equivalent$:$
    \begin{itemize}
      \item[\rm(1)] $\Gh^{n}_{T}(-,Y)=0;$
      \item[\rm(2)] $Y\in[\Sub(T)]_{n}.$
    \end{itemize}
\end{itemize}
\end{lemma}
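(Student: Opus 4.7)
The plan is to mirror the strategy of the Ghost Lemma (Lemma \ref{ghostlemm}), exploiting the associativity of $\multi$ and the ideal property of $n$-fold $T$-coghosts recorded in Remark \ref{rmk:ideal}.

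For part (a), I would argue by induction on $n$, using the decomposition $[\Sub(T)]_n = [\Sub(T)]_{n-1} \multi \Sub(T)$, which is available by associativity. The base case $n=1$ is immediate: since $[\Sub(T)]_1 = \Sub(T)$, any mono $\iota : Y \hookrightarrow T_0$ with $T_0 \in \add T$ kills a $T$-coghost $f : X \to Y$ by post-composition, and mono-cancellation gives $f = 0$. For the inductive step, given $Y$ a summand of some $Y'$ fitting in $0 \to U \to Y' \xrightarrow{p} V \to 0$ with $U \in [\Sub(T)]_{n-1}$ and $V \in \Sub(T)$, I first reduce to the case $Y = Y'$ by composing an $n$-fold coghost $f = f_n \circ \cdots \circ f_1 : X \to Y$ with the section $s : Y \to Y'$; the map $s \circ f_n$ remains a $T$-coghost by Remark \ref{rmk:ideal}, so $s \circ f$ is still $n$-fold, and $s \circ f = 0$ implies $f = 0$ upon applying the retraction. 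Then a mono $\tau : V \hookrightarrow T_0 \in \add T$ forces $\tau \circ p \circ f_n = 0$ (since $f_n$ is a coghost and $\tau \circ p$ targets $T_0 \in \add T$), and cancelling $\tau$ yields $p \circ f_n = 0$. Hence $f_n$ factors through the kernel $\iota_U : U \to Y$ as $f_n = \iota_U \circ f_n'$, giving $f = \iota_U \circ (f_n' \circ f_{n-1} \circ \cdots \circ f_1)$. Bundling $f_n' \circ f_{n-1}$ into a single coghost (again by Remark \ref{rmk:ideal}) presents the parenthesized factor as an $(n-1)$-fold coghost from $X$ into $U \in [\Sub(T)]_{n-1}$, and the inductive hypothesis finishes the argument.

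For part (b), the implication (2) $\Rightarrow$ (1) is part (a). For the converse, I would use covariant finiteness to iteratively build left $\add T$-approximations: for each $Z$ choose such an approximation $\phi_Z : Z \to T_Z$ and let $\eta_Z : \Ker \phi_Z \hookrightarrow Z$ be its kernel, which is a $T$-coghost by Lemma \ref{coghost}. Setting $Y_0 := Y$ and $Y_{i+1} := \Ker \phi_{Y_i}$ produces a chain of $T$-coghosts $Y_n \to Y_{n-1} \to \cdots \to Y_0 = Y$ whose total composition $\eta_{Y_0} \circ \cdots \circ \eta_{Y_{n-1}}$ is an $n$-fold $T$-coghost, and hence vanishes by hypothesis. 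Because each $\eta_{Y_i}$ is a monomorphism (being a kernel), peeling the composition from the left successively forces $\eta_{Y_{n-1}} = 0$, which, being a mono equal to zero, forces $Y_n = 0$, so that $\phi_{Y_{n-1}}$ is itself a monomorphism and $Y_{n-1} \in \Sub(T) = [\Sub(T)]_1$. Climbing back up via the short exact sequences $0 \to Y_{i+1} \to Y_i \to \mathrm{Im}\,\phi_{Y_i} \to 0$ with $\mathrm{Im}\,\phi_{Y_i} \in \Sub(T)$, a downward induction gives $Y_{n-k} \in [\Sub(T)]_k$, and in particular $Y = Y_0 \in [\Sub(T)]_n$.

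The main obstacle is selecting the correct associativity decomposition in part (a): using $\Sub(T) \multi [\Sub(T)]_{n-1}$ instead of $[\Sub(T)]_{n-1} \multi \Sub(T)$ would force one to extend a morphism $U \to T_0$ to a morphism $Y \to T_0$, demanding covariant finiteness that is not available in (a). The chosen decomposition places the \emph{easy} piece $\Sub(T)$ on the right so that the mono $V \hookrightarrow T_0$ produces the required annihilation directly via post-composition with $p$, avoiding any approximation hypothesis.
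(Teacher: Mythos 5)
Your proof is correct and follows essentially the same strategy as the paper: for (a), an induction on $n$ whose base case is mono-cancellation, and for (b), an iterated chain of kernels of left $\add T$-approximations whose composition is an $n$-fold coghost, forced to vanish, then climbed back up. The one real difference is in the inductive step of (a). You decompose $[\Sub(T)]_n = [\Sub(T)]_{n-1}\multi[\Sub(T)]_1$, putting the inductive-hypothesis piece $U\in[\Sub(T)]_{n-1}$ in the \emph{kernel} position and $V\in\Sub(T)$ in the \emph{cokernel} position; you then kill $p\circ f_n$ via the mono into $T_0$, factor $f_n$ through $U$, fold $f_n'\circ f_{n-1}$ into a single coghost by the ideal property, and finish by the inductive hypothesis on $U$. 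The paper takes the mirror-image decomposition $[\Sub(T)]_1\multi[\Sub(T)]_{n-1}$: there $Y_1\in\Sub(T)$ is the kernel and $Y_2\in[\Sub(T)]_{n-1}$ is the cokernel, the inductive hypothesis is applied to the cokernel $Y_2$ \emph{first} (killing $g_2\circ f_1\circ\cdots\circ f_k$ and so factoring the prefix $f_1\circ\cdots\circ f_k$ through the kernel $Y_1$), and only afterwards is the mono $Y_1\hookrightarrow T_1$ combined with the coghost property of the remaining map $f_{k+1}$. Both organizations work, and each leans on Remark \ref{rmk:ideal} to reposition the burden; your variant is, if anything, a touch more careful in explicitly reducing to the non-summand case by a section/retraction, which the paper glosses over. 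Your part (b) matches the paper modulo indexing. One small correction to your closing remark: you assert that the decomposition $\Sub(T)\multi[\Sub(T)]_{n-1}$ is unusable in (a) because it would force one to extend a morphism $U\to T_0$ along $U\hookrightarrow Y$, requiring covariant finiteness. That is not so — this is exactly the decomposition the paper uses, and the paper avoids any extension step precisely by applying the inductive hypothesis to the cokernel factor before invoking the mono on the kernel factor. The ``obstacle'' you describe is an artifact of insisting on first factoring $f_n$ through the kernel; the decomposition itself is fine.
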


\checks{For the convenience of readers, we still provide a proof for Coghost Lemma.}
\begin{proof}
(a) If $n=1$. Let $f: X \to Y$ be a $T$-coghost.
Since $Y\in[\Sub(T)]_{1}$, there is a monomorphism $g:Y \to T'$ such that $T'\in \add T$.
And, by the definition of coghost and $T'\in \add T$,
we have $g\circled f=0$ since $f:X\to Y$ is a $T$-coghost.
Thus, $f=0$ by using $g$ to be a monomorphism.

Next, assume that, for any $n\leqslant k$, $Y\in [\Sub(T)]_{k}$ yields that $\Gh^{k}_{T}(-,Y)=0$.
Considering the case of $n=k+1$. we have, for any $Y\in[\Sub(T)]_{k+1}$,
the following short exact sequence
\[ 0 \To{}{} Y_1 \To{g_1}{} Y \To{g_2}{} Y_2 \To{}{} 0 \]
with $Y_{1}\in [\Sub(T)]_{1}$ and $Y_{2}\in [\Sub(T)]_{k}$,
and there is a monomorphism $h_1: Y_1\to T_1$ such that $T_{1}\in\add T$.
Let $f\in \Gh_{T}^{k+1}(-,Y)$.
Then we have $f=f_{1}\circled f_{2}\circled  \cdots f_{k}\circled f_{k+1}$,
where each $f_{i}:X_{i}\to X_{i-1}$ is a $T$-coghost (we set $X_{0}:=Y$ in this proof).
By assumption and $Y_{2}\in [\Sub(T)]_{k}$, we have $\Gh^{k}_{T}(-,Y_{2})=0$.
Since $f_{1}$ is  $T$-coghost, we obtain that $g_{2}\circled f_{1}$ is also a $T$-coghost by Remark \ref{rmk:ideal}. Then
\[g_{2}\circled (f_{1} \circled f_{2} \circled \cdots \circled f_{k})=(g_{2}\circled f_{1}) \circled f_{2} \circled \cdots \circled f_{k}\in\Gh^{k}_{T}(-,Y_{2}).\]
Moreover, $g_{2}\circled (f_{1} \circled f_{2} \circled \cdots \circled f_{k})=0$.
Then, since $g_1$ is the kernel of $g_2$, there exists a unique morphism $h:X_{k}\to Y_{1}$ such that
\[f_{1} \circled f_{2} \circled \cdots \circled f_{k}=g_{1}\circled h,\]
that is, we obtain the following diagram
\[\xymatrix{
& T_1 &&&\\
  0 \ar[r]&Y_{1}\ar@{^(->}[u]_{h_{1}}\ar[r]^-{g_{1}}
& Y\ar[r]^-{g_{2}}
& Y_{2}\ar[r]
& 0
& \\
&& X_{1} \ar[u]^{f_{1}} &&&  \\
&& \vdots\ar[u]^{f_{2}} & & &  \\
&& X_{k}\ar[u]^{f_{k}}\ar@{-->}@/^1pc/[luuu]^{h} & & &  \\
&& X_{k+1} \ar[u]^{f_{k+1}} \ar@/_3pc/[uuuu]_{f}
& & &   }\]
commutes.
Thus, $h_{1}\circled (h \circled f_{k+1})=(h_{1}\circled h )\circled f_{k+1}=0$ by using $f_{k+1}$ to be a $T$-coghost. It follows that $h \circled f_{k+1}=0$.
Then $f=f_{1} \circled \cdots \circled f_{k+1}=g_{1}\circled h \circled f_{k+1}=g_{1}\circled 0=0$, i.e., $\Gh_{T}^{k+1}(-,Y)=0.$

(b) It is trivial that (2) implies (1) by (a). Next, we prove (1) implies (2).

First of all, we show that (1) implies (2) in the case of $n=1$.
Assume that $Y$ is an object satisfying $\Gh^{1}_{T}(-,Y)=0$,
and $f: Y\to T_1$ is a left $\add T$-approximation of $Y$.
Then for each map $h:Y\to T_{1}'$ with $T_{1}'\in\add T$,
there is a morphism $g:T_1\to T_1'$ such that the following diagram
\[\xymatrix{
 \Ker f\ar@{^(->}[r]^{f_{1}}
& Y\ar[r]^-{f}\ar[d]_-{h}
& T_{1} \ar@{-->}[ld]^-{g}
& \\
& T_{1}' &&  \\
}\]
commutes. Thus, $h\circled f_{1}=(g\circled f)\circled f_{1}=g\circled (f\circled f_{1})=g\circled 0=0$.
It follows that $f_{1}$ is a $T$-coghost.
Furthermore, $\Gh^{1}_{T}(-,Y)=0$ yields that $f_1$, as a morphism in $\Gh^{1}_{T}(\Ker f,Y)$, is zero. Thus, $\Ker f=0$.
Therefore, $f$ is monomorphic, i.e., $Y\in  [\Sub(T)]_{1}$ as required.

Next, suppose that $\Gh^{k}_{T}(-,Y)=0$.
Considering the following family of exact sequences
\begin{align}\label{formula:exact}
  (\xymatrix{0\ar[r] & Y_{i+1}\ar[r]^-{f_{i+1}} & Y_{i}\ar[r]^-{g_{i}} & T_{i}})_{0\leqslant i\leqslant k},
\end{align}
where, for all $i$, $g_i$ is a left $\add T$-approximation of $Y_{i}$;
$Y_{i+1}:=\Ker g_{i}$; and $Y_{0}:=Y$,
we know that $f_1, \ldots, f_{k+1}$ are $T$-coghosts by Lemma \ref{coghost}.
Then we obtain
\[f_{1}\circled f_{2}\circled \cdots \circled f_{k-1} \circled f_{k}=0\]
by $\Gh^{k}_{T}(-,Y)=0$, and it follows $f_k=0$ by using all $f_{i}$ ($0\leqslant i \leqslant k-1$) to be monomorphisms.
On the other hand, (\ref{formula:exact}) induces two family exact sequences
\begin{center}
$( 0 \To{}{} Y_{i+1} \To{f_{i+1}}{} Y_{i} \To{}{} \Im g_i \To{}{} 0 )_{0\leqslant i\leqslant k-1}$

and
$( 0 \To{}{} \Im g_i \To{}{} T_i)_{0\leqslant i \leqslant k-1}$
\end{center}
by the canonical decomposition of $g_i$.
It is easy to see that $\Im g_{i}\in [\Sub(T_i)]_{1} \subseteq [\Sub(T)]_{1}$ holds for each $0\leqslant i \leqslant k-1$, and then we have
\begin{align*}
Y=Y_{0}&\in [Y_{1}]_{1}\multi [ \Im g_{0}]_{1}\\
        &\subseteq [Y_{2}]_{1}\multi [\Im g_{1}]_{1}\multi [ \Im g_{0}]_{1}\\
        &\subseteq [Y_{3}]_{1}\multi [ \Im g_{2}]_{1}\multi [ \Im g_{1}]_{1}\multi [ \Im g_{0}]_{1}\\
        &\;\;\;\vdots\\
   &\subseteq [Y_{n}]_{1}\multi [ g_{n-1}]_{1}\cdots \multi [ \Im g_{2}]_{1}\multi [\Im g_{1}]_{1}\multi [\Im g_{0}]_{1}\\
   &\subseteq [0]_{1}\multi [\Sub(T)]_{1}\cdots \multi [\Sub(T)]_{1}\multi [\Sub(T)]_{1}\multi  [\Sub(T)]_{1}\\
          &=[\Sub(T)]_{n}.
\end{align*}
Therefore, (1) implies (2) by induction.
\end{proof}

\begin{remark}\label{inc-Sub} \rm
If $\mathcal{U}$ is a subcategory of $\V$,
then $[\mathcal{U}]_n \subseteq [\V]_{n}$ holds for all $n\in\NN$,
see {\rm \cite[Proposition 2.2(2)]{zheng2019extension}}.
Immediately, for each $T\in \mathcal{A}$, we have
$[T]_{n}\subseteq [\Fac(T)]_{n}$ and $[T]_{n}\subseteq [\Sub(T)]_{n}$.

\end{remark}

\subsection{Coghosts on Artin algebra}
Let $\itLamb$ be an Artin $R$-algebra defined over Artin ring $R$ in this section.
Then the category $\mod\itLamb$ of finitely generated right $\itLamb$-module (say $\itLamb$-module for short) is an Abelian category.
Now, we provide some properties of coghosts and strong generator in $\mod\itLamb$ which are dual of the properties of ghost.

\subsubsection{Some properties of coghost on Artin algebra}
Let $T$ be a finitely generated $\itLamb$-module. The following Lemma is well-known.

\begin{lemma}\label{addT-cov}
The subcategory $\add T$ of $\mod\itLamb$ is a covariantly finite subcategory.
\end{lemma}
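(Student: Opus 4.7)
The plan is to construct, for every $X \in \mod\itLamb$, an explicit left $\add T$-approximation using the Hom-finiteness of $\mod\itLamb$ over the base Artin ring $R$. This is the standard Auslander--Smalø construction; the Artinness of $R$ is what makes everything work.

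First I would fix an arbitrary $X \in \mod\itLamb$ and look at the $R$-module $\Hom_{\itLamb}(X,T)$. Since $X$ and $T$ are finitely generated $\itLamb$-modules and $R$ is an Artin ring, $\Hom_{\itLamb}(X,T)$ has finite length as an $R$-module; in particular it is finitely generated as a left $\End_{\itLamb}(T)$-module (via post-composition $\lambda \cdot f := \lambda \circled f$). Pick a finite generating set $f_1,\ldots,f_n \in \Hom_{\itLamb}(X,T)$, and assemble them into the morphism
\[ f := (f_1,\ldots,f_n)^{\mathrm{tr}} : X \longrightarrow T^{n}, \qquad x \mapsto (f_1(x),\ldots,f_n(x)). \]
Clearly $T^{n} \in \add T$, so $f$ is a candidate morphism into $\add T$.

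Next I would verify the approximation property. Given any $h : X \to T'$ with $T' \in \add T$, write $T'$ as a direct summand of some $T^{m}$, so it suffices to factor $h$ when $T'=T^{m}$, and by working componentwise it suffices to do so when $T'=T$. In that case $h \in \Hom_{\itLamb}(X,T)$, so by the choice of the $f_i$'s we can write $h = \sum_{i=1}^{n}\lambda_i \circled f_i$ for some $\lambda_i \in \End_{\itLamb}(T)$. Then the $\itLamb$-morphism $g := (\lambda_1,\ldots,\lambda_n) : T^{n} \to T$ satisfies $h = g \circled f$, which is exactly the required factorization through $f$. This shows $f$ is a left $\add T$-approximation of $X$, and since $X$ was arbitrary, $\add T$ is covariantly finite.

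There is no real obstacle here; the only subtlety worth flagging is to make sure the finite generation of $\Hom_{\itLamb}(X,T)$ is invoked as a left $\End_{\itLamb}(T)$-module rather than merely as an $R$-module, because the factor $g : T^{n} \to T$ must be a $\itLamb$-homomorphism and is built precisely from the scalars $\lambda_i$ in $\End_{\itLamb}(T)$. Everything else is bookkeeping: direct-summand projections handle the reduction from $T' \in \add T$ to $T' = T$, and the rest is formal.
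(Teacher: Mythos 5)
Your proof is correct and takes essentially the same approach as the paper: both build the candidate left approximation $f = (f_1,\ldots,f_n)^{\mathrm{tr}} : X \to T^{\oplus n}$ from a finite generating set of $\Hom_{\itLamb}(X,T)$ and then verify the factorization property using direct-summand data. The only cosmetic difference is that you use generators over $\End_{\itLamb}(T)$ (and reduce to the case $T'=T$), whereas the paper uses generators over the base Artin ring $R$ and packages the factorization as a matrix identity, but since $R$ acts through $\End_{\itLamb}(T)$ these amount to the same argument.
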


\noindent
We will prove Proposition \ref{chain_map_coghost} through the above lemma.
Thus, for the convenience of readers, we still provide proof of the above lemma.

\begin{proof}
First of all, for each $M\in \mod\itLamb$, $\Hom_{\itLamb}(M,T)$ can be seen as a finitely generated $R$-module defined by
\[ \Hom_{\itLamb}(M,T) \times R \to \Hom_{\itLamb}(M,T),\
 (h,r) \mapsto h\cdot r,  \]
where $h\cdot r$ is the map $M\to T$ sending each $m\in M$ to $f(m)r$,
see \cite{auslander1997representation}.
Then $\Hom_{\itLamb}(M,T)$ containing a family of right $R$-homomorphism $f_1,\ldots, f_n$ which form a set of generators for $\Hom_{\itLamb}(M,T)$.

Next, take an arbitrary map $g:M\to N$ with $N\in\add T$.
Notice that there is a positive integer $m$ such that $N$ is isomorphic to a direct summand of $T^{\oplus m}$ (written as $N\le_{\oplus} T^{\oplus m}$ for simplification),
we can find two $\itLamb$-homomorphisms
\begin{center}
  $\eta = \left(\begin{smallmatrix}
  \eta_1\\ \vdots \\ \eta_m
  \end{smallmatrix}\right):
  N\to T^{\oplus m}$
  and
  $\theta = \left(
  \theta_1 \ \cdots \ \theta_m
  \right):
  T^{\oplus m} \to N$
\end{center}
such that $\theta\circled\eta = \sum_{i=1}^m \theta_i\eta_i = 1_N$.
Here, $\eta_1,\ldots, \eta_m$ are $R$-homomorphisms in $\Hom_{\itLamb}(N,T)$,
and $\theta_1,\ldots, \theta_m$ are $R$-homomorphisms in $\Hom_{\itLamb}(T,N)$.
Thus, for the $\itLamb$-homomorphism $\eta_i\circled g\in\Hom_{\itLamb}(M,T)$,
there are $n$ elements $b_{1j},\ldots, b_{nj}$ such that
$\eta_i\circled g = \sum_{j=1}^n b_{ij}f_j$. It follows that
\begin{align*}
\eta\circled g
=\left( \begin{matrix}
\eta_{1}\circled g\\
\vdots\\
\eta_{n}\circled g
\end{matrix}\right)
= \left( \begin{smallmatrix}
\sum\limits_{j=1}^{n}b_{1j}f_{j}\\
\vdots\\
\sum\limits_{j=1}^{n}b_{mj}f_{j}
\end{smallmatrix}\right)
= \left( \begin{matrix}
b_{11}&\cdots&b_{1n}\\
\vdots&      &\vdots\\
b_{m1}&\cdots&b_{mn}
\end{matrix}\right)\circled \left(
\begin{smallmatrix}
f_{1}\\
\vdots\\
f_{n}
\end{smallmatrix}\right).
\end{align*}
The right of the above equation is written as $B\circled f$ for simplification in this proof.
Then we obtain
\[g=1_{N}\circled g=(\theta \circled \eta )\circled g=\theta \circled (\eta \circled g)=\theta \circled (B\circled f)=(\theta \circled B)\circled f.\]
Thus, $g$ factors through $f$, see the following diagram
\begin{center}
\begin{tikzpicture}
\draw (0,0) node {
$\xymatrix{
& M \ar[rr]^{f}\ar[d]_{g}
&& T^{\oplus n}\ar@{-->}[lld]^{\theta\circled B}\ar@/^1pc/[llddd]^{B}\\
& N \ar[dd]^{\eta}_{\le_{\oplus}}\\
& \\
& T^{\oplus m}\ar@/^3pc/[uu]^{\theta}
}$
};
\draw (0.15,1.3) node{$\spadesuit$};
\draw (0.15,-0.2) node{$\clubsuit$};
\end{tikzpicture}
\end{center}
whose triangles $\spadesuit$ and $\clubsuit$ are commute.
Therefore, we get a $\itLamb$-homomorphism $f$ which is a left $\add T$-approximation of $M$.
Thus, $\add T$ is covariantly finite.
\end{proof}

\begin{property}\label{chain_map_coghost}
For a chain of $\add T$-coghosts
\[\xymatrix{
       Y_{n}\ar[r]^-{f_{n}} &Y_{n-1}\ar[r]^-{f_{n-1}} &\cdots \ar[r]^-{f_{3}}&Y_{2}\ar[r]^-{f_{2}} &Y_{1}\ar[r]^-{f_{1}} &Y_{0}
       }\]
in $\mod\itLamb$ satisfying $f_{1}\circled f_{2}\circled \cdots \circled f_{n}\neq 0$,
we have $Y_{0}\notin [T]_{n}.$
\end{property}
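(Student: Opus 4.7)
The plan is to argue by contraposition and reduce immediately to the Coghost Lemma (Lemma \ref{coghostlemm}(a)), which has already been established in the paper without any covariantly-finite hypothesis.

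First I would suppose, for the sake of contradiction, that $Y_{0}\in[T]_{n}$. By Remark \ref{inc-Sub} we have the inclusion $[T]_{n}\subseteq [\Sub(T)]_{n}$, so in particular $Y_{0}\in[\Sub(T)]_{n}$. Applying part (a) of Lemma \ref{coghostlemm} to $Y=Y_{0}$ gives $\Gh^{n}_{T}(-,Y_{0})=0$, that is, every $n$-fold $T$-coghost ending at $Y_{0}$ must vanish.

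Next I would observe that the hypothesis produces exactly such an $n$-fold $T$-coghost: by definition, the composition $f_{1}\circled f_{2}\circled \cdots \circled f_{n}\in\Hom_{\itLamb}(Y_{n},Y_{0})$ is an element of $\Gh^{n}_{T}(Y_{n},Y_{0})\subseteq \Gh^{n}_{T}(-,Y_{0})$ since each $f_{i}$ is an $\add T$-coghost (equivalently a $T$-coghost by the definition in the paper). Combining with the previous step forces $f_{1}\circled f_{2}\circled \cdots \circled f_{n}=0$, contradicting the standing assumption. Hence $Y_{0}\notin [T]_{n}$.

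There is no real obstacle here: the whole statement is a formal consequence of Coghost Lemma (a) together with the inclusion $[T]_{n}\subseteq [\Sub(T)]_{n}$ recorded in Remark \ref{inc-Sub}. The only point to be careful about is that we invoke only part (a) of Lemma \ref{coghostlemm}, which does \emph{not} need $\add T$ to be covariantly finite, so no additional hypothesis on $T$ or on $\mod\itLamb$ is required beyond what is already in force.
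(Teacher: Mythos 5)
Your proof is correct and uses the same two ingredients as the paper (the inclusion $[T]_n\subseteq[\Sub(T)]_n$ from Remark~\ref{inc-Sub} and the Coghost Lemma~\ref{coghostlemm}), but you route through part~(a) of the Coghost Lemma by contraposition, whereas the paper invokes part~(b) and therefore also cites Lemma~\ref{addT-cov} to supply the covariantly-finite hypothesis. Logically these are the same implication: the paper uses ``$\Gh^{n}_{T}(-,Y_0)\ne 0 \Rightarrow Y_0\notin[\Sub(T)]_n$,'' which is precisely the contrapositive of Lemma~\ref{coghostlemm}(a), so part~(b) and the covariant finiteness are never genuinely needed there. Your observation that the appeal to Lemma~\ref{addT-cov} can be dropped is a small but real tightening of the argument.
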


\begin{proof}
Notice that the composition $f_{1}\circled f_{2}\circled \cdots \circled f_{n}\in \Gh^{n}_{T}(Y_{n},Y_{0})$ is a non-zero $\itLamb$-homomorphism, we obtain $\Gh^{n}_{T}(Y_{n},Y_{0}) \ne 0$.
Then, by Lemma \ref{addT-cov} and Lemma \ref{coghostlemm} (b), $Y_{0}\notin [\Sub T]_{n}$ holds.
Thus, $Y_{0}\notin [T]_{n}$ since $[T]_{n}\subseteq [\Sub T]_{n}$ by Lemma \ref{inc-Sub}.
\end{proof}

\subsubsection{Some properties of strong generator over Artin algebra}
In this subdivision, we provide some properties for strong generator in the category $\mod\itLamb$ of finitely generated $\itLamb$-module over an Artin algebra $\itLamb$.

\begin{property}\label{f_is_not_coghost}
If $T$ is a strong generator of $\mod\itLamb$, then any automorphism $f:X\to X$ defined on non-zero $\itLamb$-module $X$ is not a $T$-coghost.
\end{property}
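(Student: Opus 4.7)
The plan is to argue by contradiction: assume $f\colon X\to X$ is a $T$-coghost that is also an automorphism of a non-zero $\itLamb$-module $X$, and then derive from this the conclusion that $X$ can never lie in $[T]_n$ for any $n \in \NN^+$, directly contradicting the hypothesis that $T$ generates $\mod\itLamb$ in finitely many extension steps.

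First, I would use the fact that being a $T$-coghost is closed under composition: since $f$ itself is a $T$-coghost (by assumption), the $k$-fold iterate $f^{\circ k} = f\circled f\circled\cdots\circled f$ is, by definition, a $k$-fold $T$-coghost, i.e.\ an element of $\Gh^{k}_T(X,X)$. Crucially, because $f$ is an automorphism of $X$ and $X \neq 0$, each power $f^{\circ k}$ remains an automorphism and in particular is non-zero in $\Hom_{\itLamb}(X,X)$.

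Next, I would feed this into Proposition \ref{chain_map_coghost}. Applying it to the chain
\[ \xymatrix{ X \ar[r]^-{f} & X \ar[r]^-{f} & \cdots \ar[r]^-{f} & X \ar[r]^-{f} & X } \]
consisting of $k$ copies of $f$, whose total composition $f^{\circ k}\neq 0$, we obtain $X\notin [T]_{k}$. Since $k$ was arbitrary, this gives $X \notin [T]_{k}$ for every $k \in \NN^+$, and hence $X \notin \bigcup_{k\geqslant 0} [T]_k = [T]_\infty$.

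Finally, the contradiction comes from the strong generator hypothesis. By Definition \ref{def2.1}, there exists $N \in \NN$ with $\mod\itLamb = [T]_{N+1}$, so in particular $X \in [T]_{N+1}$, which contradicts the previous paragraph. The only step that requires a moment of care is the verification that $f^{\circ k}$ genuinely qualifies as a $k$-fold $T$-coghost (rather than merely a map whose $k$-th iterate is a coghost), but this is immediate from the definition once one notes that a single $T$-coghost can legitimately be composed with itself. No covariant finiteness of $\add T$ is needed here, as only part (a) of the Coghost Lemma (which is embedded in Proposition \ref{chain_map_coghost}) is invoked.
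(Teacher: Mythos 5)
Your proof is correct and follows essentially the same argument as the paper's: both take a $T$-coghost automorphism $f$, observe that its iterates $f^{\circ k}$ are nonzero $k$-fold $T$-coghosts, apply Proposition \ref{chain_map_coghost} to conclude $X\notin[T]_k$, and contradict the strong generator hypothesis. The paper fixes $n$ with $\mod\itLamb=[T]_n$ at the outset and applies the argument only to that $n$, whereas you run it for all $k$, but this is only a presentational difference.
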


\begin{proof}
Assume $\mod\itLamb=[T]_{n}$, $n\in\NN^+$.
If $f$ is both an isomorphism and a $T$-coghost, then $f^n$ is an isomorphism which is not zero.
In thise caes, $X \notin [T]_{n}=\mod\itLamb$ by Property \ref{chain_map_coghost}, a contradiction.
\end{proof}

\begin{property} \label{lemm:cog-criterion}
Given a map $f: M_{1}\to M_{2}$ in $\mod\itLamb$.
If for each indecomposable module $N\in\add T$,
either $\Hom_{\itLamb}(M_{1},N)=0$ or $\Hom_{\itLamb}(M_{2},N)=0$,
then $f$ is a $T$-coghost.
\end{property}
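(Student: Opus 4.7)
The plan is to unravel the definition of $T$-coghost and then exploit the Krull--Schmidt decomposition available in $\mod \itLamb$. By definition, $f : M_1 \to M_2$ is a $T$-coghost precisely when, for every $T' \in \add T$ and every $h \in \Hom_\itLamb(M_2, T')$, one has $h \circled f = 0$. So I fix such a $T'$ and an arbitrary $h$ and aim to verify $h \circled f = 0$.

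Since $\itLamb$ is Artin, $\add T$ is closed under direct summands and the Krull--Schmidt theorem applies, so any $T' \in \add T$ decomposes as $T' = \bigoplus_{i=1}^{k} N_i$ with each $N_i$ an indecomposable module lying in $\add T$. Writing $\iota_i : N_i \to T'$ and $\pi_i : T' \to N_i$ for the canonical injections and projections, set $h_i := \pi_i \circled h \in \Hom_\itLamb(M_2, N_i)$, so that $h = \sum_{i=1}^{k} \iota_i \circled h_i$ and hence $h \circled f = \sum_{i=1}^{k} \iota_i \circled (h_i \circled f)$.

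Now apply the hypothesis componentwise: for each $i$, either $\Hom_\itLamb(M_1, N_i) = 0$ or $\Hom_\itLamb(M_2, N_i) = 0$. In the first case, $h_i \circled f \in \Hom_\itLamb(M_1, N_i) = 0$, so the $i$-th summand vanishes. In the second case, $h_i \in \Hom_\itLamb(M_2, N_i) = 0$, so again $h_i \circled f = 0$. Either way each term in the above sum is zero, hence $h \circled f = 0$, and since $T'$ and $h$ were arbitrary, $f$ is a $T$-coghost.

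There is no real obstacle here; the only point that needs to be stated cleanly is the Krull--Schmidt reduction from $T' \in \add T$ to its indecomposable direct summands, which is exactly what makes the hypothesis (phrased only for indecomposables in $\add T$) sufficient.
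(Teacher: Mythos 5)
Your proof is correct and follows essentially the same route as the paper's: decompose an arbitrary $T' \in \add T$ into indecomposable summands via Krull--Schmidt, write the test map $h : M_2 \to T'$ in components $h_i : M_2 \to N_i$, and kill each component $h_i \circled f$ using the hypothesis. The only cosmetic difference is that you express the decomposition via canonical injections and projections $\iota_i, \pi_i$, whereas the paper writes $h$ directly as a column vector of its components; the underlying argument is identical.
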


\begin{proof}
Let $N' = \bigoplus_{k=1}^{t} N_k\in \add T$ be an arbitrary module,
where all direct summands $N_k$ of $N$ are indecomposable.
Then for any homomorphism $g = \left(\begin{smallmatrix} g_1 \\ \vdots \\ g_t \end{smallmatrix}\right) : M_2 \to N'$,where $g_k\in \Hom_{\itLamb}(M_2, N_k)$ for each $1\leqslant k \leqslant t$,
we have
\[ g \circled f
  = \left(\begin{smallmatrix} g_1\circled f \\ \vdots \\ g_t\circled f \end{smallmatrix}\right)
\in \Hom_{\itLamb}(M_1, N'), \]
 where $g_k\circled f\in\Hom_{\itLamb}(M_1, N_k)$.
Since either $\Hom_{\itLamb}(M_1, N_k)=0$ or $\Hom_{\itLamb}(M_2, N_k)=0$,
we obtain that $g_k\circled f=0$ for each $1\leqslant k \leqslant t$, and so $g\circled f=0$.
\end{proof}

\begin{property} \label{lemm:cog-directsum}
Given a map $f: M_{1}\to M_{2}$ in $\mod\itLamb$.
If $f$ is both $T_{1}$-coghost and $T_{2}$-coghost, then $f$ is a $(T_{1}\oplus T_{2})$-coghost.
\end{property}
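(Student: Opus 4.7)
The plan is to directly unpack the definition: we must show that for every $N\in\add(T_1\oplus T_2)$ and every $g\in\Hom_{\itLamb}(M_2,N)$, one has $g\circled f=0$. The whole point is to reduce the situation to testing $f$ against summands that lie in $\add T_1$ and in $\add T_2$ separately, where the two hypotheses on $f$ can be applied.

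First I would observe that any $N\in\add(T_1\oplus T_2)$ is a direct summand of $(T_1\oplus T_2)^{\oplus n}=T_1^{\oplus n}\oplus T_2^{\oplus n}$ for some $n\in\NN^+$. Hence there exist $\itLamb$-homomorphisms $\iota:N\to T_1^{\oplus n}\oplus T_2^{\oplus n}$ and $\pi:T_1^{\oplus n}\oplus T_2^{\oplus n}\to N$ with $\pi\circled\iota=1_N$. Using the biproduct decomposition, the composite $\iota\circled g:M_2\to T_1^{\oplus n}\oplus T_2^{\oplus n}$ can be written uniquely as $\iota\circled g=\left(\begin{smallmatrix}g_1\\ g_2\end{smallmatrix}\right)$ with $g_1\in\Hom_{\itLamb}(M_2,T_1^{\oplus n})$ and $g_2\in\Hom_{\itLamb}(M_2,T_2^{\oplus n})$.

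Next, since $T_i^{\oplus n}\in\add T_i$ for $i=1,2$ and $f$ is both a $T_1$-coghost and a $T_2$-coghost, the defining property of coghost gives $g_1\circled f=0$ and $g_2\circled f=0$; consequently $\iota\circled g\circled f=0$. Composing with the retraction $\pi$ on the left and using $\pi\circled\iota=1_N$ yields
\[ g\circled f = (\pi\circled\iota)\circled(g\circled f) = \pi\circled(\iota\circled g\circled f) = 0, \]
which shows $f$ is a $(T_1\oplus T_2)$-coghost.

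The argument is essentially a verification and I do not anticipate any real obstacle; the only subtlety is ensuring that $N$ is handled correctly as a direct summand (not merely a direct sum) of copies of $T_1\oplus T_2$, which is why the splitting pair $(\iota,\pi)$ is introduced rather than working with an inclusion alone.
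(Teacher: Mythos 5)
Your proof is correct, and it is mildly more careful than the paper's. The paper's argument silently invokes Krull--Schmidt in $\mod\itLamb$: it tests $f$ only against morphisms into objects of the form $T_1'\oplus T_2'$ with $T_i'\in\add T_i$, which covers all of $\add(T_1\oplus T_2)$ precisely because every indecomposable summand of $T_1\oplus T_2$ lies in $\add T_1$ or $\add T_2$. Your version instead handles an arbitrary $N\in\add(T_1\oplus T_2)$ by writing it as a retract of $T_1^{\oplus n}\oplus T_2^{\oplus n}$ via the splitting pair $(\iota,\pi)$, pushes $g$ forward along $\iota$, applies the two coghost hypotheses componentwise, and then cancels via $\pi\circled\iota=1_N$. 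This avoids any decomposition of $N$ itself and hence would go through verbatim in any additive category, not just a Krull--Schmidt one; the cost is a slightly longer bookkeeping argument. Both routes are valid, and in the setting of the paper they yield the same conclusion.
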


\begin{proof}
For any morphism $g = \left({_{g_2}^{g_1}}\right) \in \Hom_{\itLamb}(M_2, T_1'\oplus T_2')$, where $T_1'\in\add T_1$ and $T_2'\in\add T_2$, we have
\[ g\circled f = \left({_{g_2}^{g_1}}\right) \circled f = \left({_{g_2\circled f}^{g_1\circled f}}\right). \]
Since $f$ is both $T_{1}$-coghost and $T_{2}$-coghost, we obtain $g_1\circled f=0$ and $g_2\circled f=0$.
Then $g\circled f=0$, i.e., $f$ is a $(T_{1}\oplus T_{2})$-coghost.
\end{proof}

The following lemma is well-known. We will use it to prove Proposition \ref{strong_soc_top}.

\begin{lemma}\label{soc-and-top}
Let $M$ and $N$ be two $\itLamb$-modules in $\mod\itLamb$.

    $(1)$ If $ N$ is semi-simple, then
    $\Hom_{\itLamb}(N,M)=\Hom_{\itLamb}(N,\soc M).$

    $(2)$ If $M$ is semi-simple, then
    $\Hom_{\itLamb}(N,M)=\Hom_{\itLamb}(\top N, M).$
\end{lemma}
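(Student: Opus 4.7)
The plan is to interpret the claimed equalities as canonical isomorphisms induced by the inclusion $\iota\colon \soc M \hookrightarrow M$ in part (1) and by the projection $\pi\colon N \twoheadrightarrow \top N = N/\rad N$ in part (2), and to verify in each case that the induced natural map is both injective and surjective.

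For part (1), postcomposition with $\iota$ gives a map $\iota_{*}\colon \Hom_{\itLamb}(N,\soc M) \to \Hom_{\itLamb}(N,M)$, which is injective because $\iota$ is a monomorphism. For surjectivity, I would take an arbitrary $f \in \Hom_{\itLamb}(N,M)$ and show that $\Im f \subseteq \soc M$: since $N$ is semi-simple, $\Im f$ is a quotient of a semi-simple module, hence itself semi-simple; since $\soc M$ is the largest semi-simple submodule of $M$, this forces $\Im f \subseteq \soc M$, so $f$ factors through $\iota$.

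For part (2), dually, precomposition with $\pi$ gives a map $\pi^{*}\colon \Hom_{\itLamb}(\top N, M) \to \Hom_{\itLamb}(N, M)$, which is injective because $\pi$ is an epimorphism. For surjectivity, I would take an arbitrary $g \in \Hom_{\itLamb}(N,M)$ and observe that $g(\rad N) \subseteq \rad M = 0$, using the fact that the radical of a semi-simple module vanishes. Hence $\rad N \subseteq \Ker g$, and by the universal property of the quotient, $g$ factors uniquely as $\bar{g}\circled \pi$ for some $\bar{g}\in \Hom_{\itLamb}(\top N, M)$.

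There is no substantial obstacle here; the lemma is a standard consequence of the defining properties of $\soc$ and $\top = (-)/\rad(-)$ together with the fact that semi-simplicity is closed under quotients (used in (1)) and that $\rad$ of a semi-simple module is zero (used in (2)). The only point that requires a line of care is to make explicit which natural map realises the asserted equality, since $\Hom_{\itLamb}(N,\soc M)$ and $\Hom_{\itLamb}(\top N, M)$ are, strictly speaking, only canonically isomorphic to (rather than equal to) subgroups of $\Hom_{\itLamb}(N,M)$.
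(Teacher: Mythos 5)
Your proof is correct, and the argument is the standard one. The paper itself offers no proof of this lemma; it is stated as "well-known" and used directly in Proposition \ref{strong_soc_top}, so there is no paper argument to compare against. One minor point worth making explicit in part (2): the inclusion $g(\rad N) \subseteq \rad M$ holds because over an Artin algebra $\rad N = N \cdot \rad\itLamb$, so $g(N\cdot\rad\itLamb) = g(N)\cdot\rad\itLamb \subseteq M\cdot\rad\itLamb = \rad M$, which then vanishes since $M$ is semi-simple; everything else is as you say.
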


\begin{property}\label{strong_soc_top}
If $T$ is a strong generator, then $\soc T\cong\top T\cong\itLamb/\rad\itLamb$.
\end{property}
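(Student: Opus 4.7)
The plan is to show that every simple $\itLamb$-module appears as a direct summand of both $\soc T$ and $\top T$; since both are semisimple, this is the essential content of the stated isomorphism with $\itLamb/\rad\itLamb$. In each half I would argue by contradiction: assuming a simple $S$ is missing, Lemma \ref{soc-and-top} collapses the relevant $\Hom$-group, promoting $\id_{S}$ to a $T$-coghost or a $T$-ghost that then conflicts with Property \ref{f_is_not_coghost} (or its ghost counterpart).

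For the socle half, suppose a simple $S$ is not a summand of $\soc T$. Since $\soc T$ is semisimple with no summand isomorphic to $S$, Schur's lemma gives $\Hom_{\itLamb}(S,\soc T)=0$, and Lemma \ref{soc-and-top}(1) promotes this to $\Hom_{\itLamb}(S,T)=0$; hence $\Hom_{\itLamb}(S,T')=0$ for every $T'\in\add T$ (a summand of some $T^{k}$). Then $g\circled\id_{S}=g=0$ for every $g\colon S\to T'$ with $T'\in\add T$, so $\id_{S}$ is a $T$-coghost. Since $\id_{S}$ is a nontrivial automorphism of the nonzero module $S$, this contradicts Property \ref{f_is_not_coghost}.

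For the top half, the cleanest route is to invoke an Artin-algebra duality $D\colon\mod\itLamb\to\mod\itLamb^{\mathrm{op}}$. Because $D$ is exact and contravariant, one checks $D([T]_{n})=[DT]_{n}$, so $DT$ is a strong generator of $\mod\itLamb^{\mathrm{op}}$; moreover $D$ swaps $\top$ with $\soc$ and permutes simples. Applying the socle case to $DT$ then forces every simple of $\itLamb^{\mathrm{op}}$ into $\soc DT\cong D(\top T)$, which dualizes back to the claim for $\top T$. Alternatively, one can argue directly: if $S$ is missing from $\top T$, Lemma \ref{soc-and-top}(2) gives $\Hom_{\itLamb}(T',S)=0$ for every $T'\in\add T$, so $\id_{S}$ is a $T$-ghost, and Ghost Lemma \ref{ghostlemm}(a) applied to $S\in[T]_{n}\subseteq[\Fac(T)]_{n}$ forces $\id_{S}=0$, the desired contradiction. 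The only real delicacy is the reading of the symbol $\cong$: the above arguments establish that $\itLamb/\rad\itLamb$ sits as a direct summand of each of $\soc T$ and $\top T$, which I take to be the intended content of the statement.
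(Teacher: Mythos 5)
Your proposal is correct and follows the same core strategy as the paper's proof. For the socle half, you and the paper both argue by contradiction: a missing simple $S$ forces $\Hom_{\itLamb}(S,T')=0$ for all $T'\in\add T$, which makes $\id_S$ (or any nonzero endomorphism of $S$) a $T$-coghost, contradicting Property~\ref{f_is_not_coghost}. The only cosmetic difference is that you inline the verification that $\id_S$ is a coghost, whereas the paper invokes Proposition~\ref{lemm:cog-criterion} with $M_1=M_2=S$.

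For the top half, the paper is terse (``similar, by Ghost Lemma~\ref{ghostlemm}''); you fill this in with two valid routes. Your second route is exactly what the paper intends: $S$ missing from $\top T$ forces $\Hom_{\itLamb}(T',S)=0$ by Lemma~\ref{soc-and-top}(2), so $\id_S$ is a $T$-ghost, and since $S\in\mod\itLamb=[T]_n\subseteq[\Fac(T)]_n$ (Remark~\ref{inc-Sub}), Ghost Lemma~\ref{ghostlemm}(a) gives $\Gh^n_T(S,-)=0$, killing $\id_S$. Your first route via the duality $D:\mod\itLamb\to\mod\itLamb^{\mathrm{op}}$ is a clean alternative that reduces the top case to the socle case outright; it is not what the paper does but is arguably cleaner, since it avoids restating the ghost analogue of Property~\ref{f_is_not_coghost}. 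Finally, your closing remark about the symbol $\cong$ is well-taken: the argument actually yields that $\itLamb/\rad\itLamb$ is a direct summand of each of $\soc T$ and $\top T$, and conversely that $\soc T$ and $\top T$ are built from the same simples, i.e.\ $\add(\soc T)=\add(\top T)=\add(\itLamb/\rad\itLamb)$. The paper's claim ``$\soc T\le_{\oplus}\bigoplus_S S$ is trivial'' is in fact not correct as a module statement when $\soc T$ carries a simple with multiplicity greater than one, so your cautious reading of $\cong$ is the right one.
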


\begin{proof}
We only show that $\soc T\cong \itLamb/\rad\itLamb$, the proof of $\top T\cong\itLamb/\rad\itLamb$ is similar by using Ghost Lemma \ref{ghostlemm}.

First of all, we show that each simple $\itLamb$-modules is a direct summand of $\soc T$.
Otherwise, there is a simple module $S\in\mod\itLamb$ such that $\Hom_{\Lambda}(S,T')=0$ holds for all $T'\in \add T$.
Then, clearly, any non-zero homomorphism $f:S\to S$ is a $T$-coghost by Proposition \ref{lemm:cog-criterion}.
It contradicts with Proposition \ref{f_is_not_coghost} because, by Schur's lemma, $f$ is an automorphism.
Thus, $S\le_{\oplus} \soc T$ holds for each simple module $S$.
It follows that $\itLamb/\rad\itLamb \cong \bigoplus_S S \le_{\oplus} \soc T$ as required.

On the other hand,  $\soc T \le_{\oplus} \bigoplus_S S$ is trivial.
Thus $\soc T \cong \itLamb/\rad\itLamb$.
\end{proof}

\begin{lemma}\label{hom_nonzero}
If $T$ is a strong generator, then $\Hom_{\itLamb}(Y,T)\ne 0$ and $\Hom_{\itLamb}(T,Y)\ne 0$ hold for all $0\ne Y\in \mod\itLamb$.
\end{lemma}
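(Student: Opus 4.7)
The plan is to derive both non-vanishing statements directly from Proposition \ref{strong_soc_top}, which guarantees that $\soc T\cong\top T\cong\itLamb/\rad\itLamb$ whenever $T$ is a strong generator. This means that every simple $\itLamb$-module $S$ appears as a direct summand of $\soc T$ (giving an inclusion $S\hookrightarrow T$) and as a direct summand of $\top T$ (giving a surjection $T\twoheadrightarrow S$). With these two ingredients, the rest should be a routine construction of non-zero homomorphisms.

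For the first claim, $\Hom_{\itLamb}(Y,T)\ne 0$, I would start from a non-zero $Y\in\mod\itLamb$ and use that $\itLamb$ is Artin to conclude $\top Y\ne 0$. Picking any simple direct summand $S$ of $\top Y$, the canonical quotient $Y\twoheadrightarrow\top Y$ composes with the projection $\top Y\twoheadrightarrow S$ to give a non-zero map $Y\to S$. Combining this with the inclusion $S\hookrightarrow\soc T\hookrightarrow T$ supplied by Proposition \ref{strong_soc_top} yields a non-zero element of $\Hom_{\itLamb}(Y,T)$. (Formally, one can also invoke Lemma \ref{soc-and-top}(1) to identify this composition with an element of $\Hom_{\itLamb}(S,\soc T)$.)

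For the dual claim, $\Hom_{\itLamb}(T,Y)\ne 0$, the argument is symmetric: $\soc Y\ne 0$ gives a simple direct summand $S$ of $\soc Y$, hence an inclusion $S\hookrightarrow Y$. Using the surjection $T\twoheadrightarrow\top T\twoheadrightarrow S$ from Proposition \ref{strong_soc_top} and composing with $S\hookrightarrow Y$ produces a non-zero element of $\Hom_{\itLamb}(T,Y)$, with Lemma \ref{soc-and-top}(2) available as a clean bookkeeping device.

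I do not anticipate any real obstacle here: the entire content of the lemma is packaged into the fact that $\top T$ and $\soc T$ contain every simple summand, which is exactly what Proposition \ref{strong_soc_top} delivers. The only point requiring minor care is to verify non-vanishing of the composed maps, which follows because the composite factors through a non-zero simple module and each of the constituent maps is either a split monomorphism or a split epimorphism onto/from that simple.
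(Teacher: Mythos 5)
Your proposal is correct and takes essentially the same route as the paper: route a non-zero map through a simple summand of $\top Y$ into $\soc T\cong\itLamb/\rad\itLamb\hookrightarrow T$, using Proposition \ref{strong_soc_top} exactly as the paper does. If anything you are more complete, since the paper writes out only the chain $Y\twoheadrightarrow\top Y\to\soc T\hookrightarrow T$ for $\Hom_{\itLamb}(Y,T)\ne 0$ and leaves the dual statement $\Hom_{\itLamb}(T,Y)\ne 0$ implicit, whereas you also spell out the symmetric argument via $\soc Y$.
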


\begin{proof}
Since $f_{1}:Y\to \top Y,\,\, f_{2}:\top Y \to \soc T=\itLamb/\rad\itLamb$, and $f_{3}:\soc T \to T,$
we have $f_{3}\circ f_{2}\circ f_{1}\neq 0$. Then $\Hom_{\Lambda}(Y,T)\ne 0$.
\end{proof}

\begin{property}\label{simple_projective}
If $T$ is a strong generator of $\mod\itLamb$,
then, for arbitrary projective or injection simple modules $S_1, \ldots, S_t$,
the direct sum $\bigoplus_{i=1}^t S_i$ lies in $[T]_{1}$.
\end{property}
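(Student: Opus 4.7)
The plan is to reduce the statement to the case of a single simple summand, since $[T]_1 = \add T$ is closed under finite direct sums: if I can show every simple projective $\itLamb$-module and every simple injective $\itLamb$-module lies in $\add T$, then the direct sum $\bigoplus_{i=1}^t S_i$ of such simples automatically lies in $\add T = [T]_1$.

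Next, I would invoke Proposition \ref{strong_soc_top}, which guarantees $\soc T \cong \top T \cong \itLamb/\rad\itLamb$ whenever $T$ is a strong generator. The key consequence is that every simple $\itLamb$-module $S$ appears as a direct summand of both $\soc T$ and $\top T$. Concretely, this gives a monomorphism $\iota : S \hookrightarrow \soc T \hookrightarrow T$ and an epimorphism $\pi : T \twoheadrightarrow \top T \twoheadrightarrow S$, each with a retraction on the $S$-factor at the semisimple level.

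Now I would split into the two cases. If $S$ is a simple \emph{injective} module, then the inclusion $\iota : S \hookrightarrow T$ splits by injectivity of $S$, so there is a retraction $T \to S$, which forces $S \leqslant_{\oplus} T$ and hence $S \in \add T$. Dually, if $S$ is a simple \emph{projective} module, then the epimorphism $\pi : T \twoheadrightarrow S$ splits by projectivity of $S$, yielding again $S \leqslant_{\oplus} T$ and $S \in \add T$. Combining these two cases with the reduction in the first paragraph completes the argument.

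I do not anticipate a serious obstacle: the proof is essentially a routine bookkeeping argument built on top of Proposition \ref{strong_soc_top}, which has already done the substantive work by producing every simple as a summand of both $\soc T$ and $\top T$. The only subtlety is to remember that we need a splitting at the level of $T$ (not just at $\soc T$ or $\top T$), and this is exactly what the injectivity or projectivity hypothesis on $S$ supplies.
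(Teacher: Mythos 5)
Your proposal is correct, and it follows the same overall route as the paper: produce a monomorphism $S \hookrightarrow T$ or an epimorphism $T \twoheadrightarrow S$ from the fact that $T$ is a strong generator, then split it using the simplicity plus a (co)freeness hypothesis. The difference is in which tool is cited (you invoke Proposition \ref{strong_soc_top} directly to place $S$ as a summand of $\soc T$ and of $\top T$, while the paper routes through Lemma \ref{hom_nonzero} and Schur) and, more importantly, in how the two cases are paired with the two maps. You correctly match the cases: injectivity of $S$ splits the monomorphism $S \hookrightarrow T$, while projectivity of $S$ splits the epimorphism $T \twoheadrightarrow S$. The paper's own proof, by contrast, constructs only the monomorphism $f \colon S \to T$ and then claims that $S$ being \emph{projective} makes $f$ a section; that step as written is not valid, since a monomorphism out of a projective module need not split (e.g.\ $S(2) \hookrightarrow P(1)$ over $k\overrightarrow{\mathbb{A}}_2$). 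The intended argument is surely the one you give, and your version is the one that should appear.
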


\begin{proof}
By Lemma \ref{hom_nonzero}, there is $\itLamb$-homomorphism $f:S\to T$ for any simple module $S$ such that $f\ne 0$. Thus, $f$ is monomorphic by Schur's lemma.
If $S$ is projective, then $f$ is a section, i.e.,
$f \simeq
\left(\begin{smallmatrix}
1_S \\ 0
\end{smallmatrix}\right): S \to S\oplus T'\ (\cong T).$
It follows that $S\le_{\oplus} T$.
We can show that $S\le_{\oplus} T$ holds in the case for $S$ to be injective by dual method.
Thus, $\bigoplus_{i=1}^t S_i \in \add T$ $= [T]_1$ since $\add T$ is closed under direct sum.
\end{proof}

\begin{property}
If $T$ is a strong generator in $\mod\itLamb$,
then, for any $T$-coghost $f:X\to Y$ with $Y$ indecomposable, we have $f\in\rad_A(X,Y)$.
\end{property}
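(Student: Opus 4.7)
The plan is to argue by contradiction, using the ideal property of coghosts (Remark \ref{rmk:ideal}) together with Proposition \ref{f_is_not_coghost}, which says that no automorphism of a non-zero module can be a $T$-coghost when $T$ is a strong generator. The crucial preliminary ingredient I would invoke is the standard Krull--Schmidt characterization of the radical of $\mod\itLamb$ at an indecomposable target: because $Y$ is indecomposable, $\End_\itLamb(Y)$ is local, and consequently a morphism $f\colon X \to Y$ belongs to $\rad_A(X, Y)$ if and only if $f$ is not a split epimorphism. Concretely, decomposing $X = \bigoplus_i X_i$ into indecomposables, $f \in \rad_A(X, Y)$ amounts to saying that no component $f_i\colon X_i \to Y$ is an isomorphism; equivalently, $f$ admits no right inverse $g\colon Y \to X$.

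With this reformulation in hand, I would suppose for contradiction that $f \notin \rad_A(X, Y)$. Then $f$ is a split epimorphism, so there exists $g\colon Y \to X$ with $f \circled g = 1_Y$. By Remark \ref{rmk:ideal}, the coghost condition is closed under post- and pre-composition with arbitrary morphisms, so the composite $f \circled g\colon Y \to Y$ is again a $T$-coghost. Hence $1_Y$ is a $T$-coghost. Since $Y$ is indecomposable we have $Y \neq 0$, so $1_Y$ is an automorphism of a non-zero module, which directly contradicts Proposition \ref{f_is_not_coghost}. This yields $f \in \rad_A(X, Y)$ as claimed.

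The main obstacle, to the extent there is one, is justifying the characterization of $\rad_A(X, Y)$ for indecomposable $Y$ as the set of non-retractions; once that is accepted, the rest of the argument is a one-line application of the ideal structure of coghosts. In a fully written proof I would add a brief justification of this standard fact (for instance, noting that if $fg = 1_Y$ with $f \in \rad_A(X, Y)$, then $1_Y$ would lie in $\rad\End_\itLamb(Y)$, which is impossible because $\End_\itLamb(Y)$ is local). Everything else is formal and requires no additional input beyond Remark \ref{rmk:ideal} and Proposition \ref{f_is_not_coghost}.
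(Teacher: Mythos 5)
Your proof is correct, but it takes a genuinely different route from the paper's. You reduce to Proposition \ref{f_is_not_coghost}: for $Y$ indecomposable, $f\notin\rad_A(X,Y)$ forces $f$ to be a split epimorphism (this is the standard local-ring characterization of the radical at an indecomposable target), so $f\circled g = 1_Y$ for some $g$, and Remark \ref{rmk:ideal} then makes $1_Y$ a $T$-coghost, immediately contradicting Proposition \ref{f_is_not_coghost}. The paper instead argues from scratch: starting from $1_Y - f\circled g$ not invertible (the definition of $\rad_A$), it uses locality of $\End_\itLamb(Y)$ to conclude $1_Y - f\circled g$ is nilpotent, computes $h\circled(1_Y - f\circled g) = h$ for every $h\in\Hom_\itLamb(Y,T')$ using the coghost condition, iterates to get $h = h\circled(1_Y - f\circled g)^m = 0$, and so derives $\Hom_\itLamb(Y,T')=0$, contradicting Lemma \ref{hom_nonzero}. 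Both arguments hinge on $\End_\itLamb(Y)$ being local, but they exploit it differently: you use it to upgrade ``not in the radical'' to ``retraction,'' the paper uses it to upgrade ``not invertible'' to ``nilpotent.'' Your route is shorter and leverages the already-proved Proposition \ref{f_is_not_coghost} (which in turn rests on the coghost chain lemma, Property \ref{chain_map_coghost}); the paper's route is more hands-on, needing only Lemma \ref{hom_nonzero}. One small remark: when you justify the radical characterization, you sketch only the easy direction (split epi $\Rightarrow$ not in the radical); for completeness you would also want the converse, which is where locality is actually used, namely that $1_Y - f\circled g$ non-invertible in the local ring $\End_\itLamb(Y)$ forces $f\circled g$ to be a unit, hence $f$ a retraction.
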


\begin{proof}
If $f\notin \rad_{A}(X,Y)=\{f\in\Hom_{A}(X,Y) \mid 1_{Y}-f\circled g\text{ is an isomorphism for all }g\in\Hom_{A}(Y,X)\}$, then
there exists a morphism $g\in \Hom_{A}(Y,X)$ such that $1_{Y}-fg$ is not an isomorphism.
Since $f$ is a $T$-coghost, we have
$h\circled f =0$ for all $h\in \Hom_{A}(Y,T')$ with $T'\in \add T.$ Then
$$h\circled (1_{Y}-f\circled g)=h\circled 1_{Y}-h\circled f\circled g=h-0\circled g=h.$$
Thus, for each $n\in\mathbb{N}^{+}$, the following equation
\begin{align*}
    h\circled (1_{Y}-f\circled g)^{n}=&h\circled (1_{Y}-f\circled g)\circled (1_{Y}-f\circled g)^{n-1}\\
    =&h\circled (1_{Y}-f\circled g)^{n-1}\\
     &\vdots \\
     =&h\circled (1_{Y}-f\circled g)\\
     =&h
\end{align*}
holds. Since $1_{Y}-f\circled g$ is not an isomorphism and $Y$ is indecomposable, we get
$(1_{Y}-f\circled g)^{m}=0$ for some $m\in\mathbb{N}^{+}$.
And then we have $h=h\circled (1_{Y}-f\circled g)^{m}=h\circled 0=0$.
Thus, $\Hom_{A}(Y,T')=0$. By Lemma \ref{hom_nonzero}, we have $\Hom_{A}(Y,T')\ne 0$, this is a contradiction.
\end{proof}

\section{Loewy lengths}
The {\defines Loewy length} $\LL(M)$ of a $\itLamb$-module $M$ in $\mod\itLamb$ is defined as the length of the ascending chain
\[ 0=\rad^n M\le \rad^{n-1} M \le \cdots \le \rad^1 M \le \rad^0M =M \]
where we use ``$X \le Y$'' to represent a submodule (resp. subobject) $X$ of $Y$ in $\mod\itLamb$ (resp. $\mathcal{A}$), and, for each $1\leqslant i\leqslant n$, $\rad^i M$ is the submodule of $M$ satisfying $\rad^i M=\rad(\rad^{i-1}M)$.
Obviously, the Loewy length of any semi-simple module is $1$, and the Loewy length of a $\itLamb$-module $M$ is zero if and only if $M=0$.

\subsection{Properties of Loewy lengths}
We recall some well-known results for Loewy length in this subsection.

\begin{lemma} \label{lemm:Loewy length 1}
For arbitrary short exact sequence
\[0 \To{}{} X \To{f}{} Y \To{g}{} Z \to 0\]
in $\mod\itLamb$, we have
\[\max\{\LL(Y),\LL(Z)\}\leqslant\LL(Y)\leqslant\LL(X)+\LL(Z).\]
\end{lemma}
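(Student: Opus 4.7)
First, I would silently correct what looks like a typo in the statement: the bound $\max\{\LL(X),\LL(Z)\}\le \LL(Y)\le \LL(X)+\LL(Z)$ is what one expects, and this is the claim I will prove. The strategy is the standard one for Loewy length, using the identity $\rad^{i}M = M\cdot J^{i}$, where $J := \rad\itLamb$ denotes the Jacobson radical of $\itLamb$.

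For the lower bound, I would argue separately for $X$ and $Z$. Because $f:X\hookrightarrow Y$ is a monomorphism, $f(\rad^{i}X) = f(X)\cdot J^{i} \subseteq Y\cdot J^{i} = \rad^{i}Y$, so if $\rad^{\LL(Y)}Y = 0$ then $\rad^{\LL(Y)}X = 0$ and hence $\LL(X)\le \LL(Y)$. For the quotient, since $g:Y\twoheadrightarrow Z$ is surjective, $g(\rad^{i}Y) = g(Y)\cdot J^{i} = Z\cdot J^{i} = \rad^{i}Z$, so $\rad^{\LL(Y)}Y = 0$ forces $\rad^{\LL(Y)}Z = 0$ and hence $\LL(Z)\le \LL(Y)$. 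Together these give $\max\{\LL(X),\LL(Z)\}\le \LL(Y)$.

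For the upper bound, set $m := \LL(X)$ and $n := \LL(Z)$, and I would show $\rad^{m+n}Y = 0$. From the computation above, $g(\rad^{n}Y) = \rad^{n}Z = 0$, so $\rad^{n}Y \subseteq \ker g = \Im f$. Identifying $\Im f$ with $X$, we then have
\[
\rad^{m+n}Y \;=\; (\rad^{n}Y)\cdot J^{m} \;\subseteq\; X\cdot J^{m} \;=\; \rad^{m}X \;=\; 0,
\]
which gives $\LL(Y)\le m+n = \LL(X)+\LL(Z)$, completing the proof.

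There is no real obstacle here; the only thing to be a little careful about is using the module-theoretic identity $\rad^{i}M = M\cdot J^{i}$ uniformly, so that both inclusions of radicals (for submodules) and equalities of radicals (for quotients) are immediate. If one preferred, one could instead replace $\rad^{i}M = M\cdot J^{i}$ by an induction using only the two facts that $\rad$ is functorial and $\rad(Y/N) = (\rad Y + N)/N$ for $N\le Y$, but the $MJ^{i}$ formulation keeps the argument to three lines.
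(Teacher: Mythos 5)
The paper states Lemma~\ref{lemm:Loewy length 1} without proof, as a recalled ``well-known'' fact, so there is no proof of record to compare against. Your argument is correct and is the standard one: you rightly read the left-hand side as $\max\{\LL(X),\LL(Z)\}$ (the ``$\LL(Y)$'' there is plainly a typo), and the three inclusions you derive from $\rad^{i}M = M\cdot J^{i}$ --- $f(\rad^{i}X)\subseteq\rad^{i}Y$, $g(\rad^{i}Y)=\rad^{i}Z$, and $\rad^{n}Y\subseteq\Im f$ forcing $\rad^{m+n}Y=0$ --- are exactly what one wants and are applied correctly.
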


\begin{lemma}\label{lemm:Loewy length 2}
For each $M\in [T]_{m}$, we have $\LL(M)\leqslant m\LL(T)$.
\end{lemma}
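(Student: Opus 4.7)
The plan is to prove the bound by induction on $m$, using Lemma \ref{lemm:Loewy length 1} together with the behavior of $\LL$ with respect to direct summands.

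First I would establish (or recall) the auxiliary fact that for any decomposition $N = N_1 \oplus N_2$ in $\mod\itLamb$, one has $\rad^i N = \rad^i N_1 \oplus \rad^i N_2$, hence $\LL(N) = \max\{\LL(N_1), \LL(N_2)\}$. In particular, if $X \leqslant_{\oplus} Y$, then $\LL(X) \leqslant \LL(Y)$. This monotonicity is exactly what is needed to handle the $\add$ appearing inside the definition of $[T]_m$.

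For the base case $m = 1$, any $M \in [T]_1 = \add T$ is a direct summand of $T^{\oplus k}$ for some $k$; by the monotonicity above together with $\LL(T^{\oplus k}) = \LL(T)$, we get $\LL(M) \leqslant \LL(T) = 1 \cdot \LL(T)$. For the inductive step, assume the bound for $m-1$ and take $M \in [T]_m = [T]_1 \multi [T]_{m-1}$. By the definition of $\multi$, $M$ is a direct summand of some $M'$ sitting in a short exact sequence
\[ 0 \longrightarrow U \longrightarrow M' \longrightarrow V \longrightarrow 0 \]
with $U \in [T]_1$ and $V \in [T]_{m-1}$. Applying Lemma \ref{lemm:Loewy length 1} to this sequence together with the base case and the inductive hypothesis gives
\[ \LL(M') \leqslant \LL(U) + \LL(V) \leqslant \LL(T) + (m-1)\LL(T) = m\, \LL(T), \]
and then the monotonicity under direct summands yields $\LL(M) \leqslant \LL(M') \leqslant m\,\LL(T)$.

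The argument is essentially routine; the only mildly subtle point is making sure the $\add$-closure in the definition of $[T]_m$ is handled, which is exactly why the preliminary observation $\LL(X) \leqslant \LL(Y)$ whenever $X \leqslant_{\oplus} Y$ is recorded first. No obstacle beyond that is anticipated.
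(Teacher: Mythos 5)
Your proof is correct and follows essentially the same induction-on-$m$ strategy as the paper's own argument, with Lemma \ref{lemm:Loewy length 1} supplying the estimate in the inductive step. The one difference is that the paper's proof writes the short exact sequence as having $M$ itself in the middle, implicitly absorbing the $\add$-closure built into $[T]_{m}$, whereas you explicitly record the monotonicity $\LL(X)\leqslant\LL(Y)$ for $X\leqslant_{\oplus}Y$ and pass through a module $M'$ of which $M$ is a direct summand; your version is the more careful rendering of the same argument.
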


\begin{proof}
In the case of $m=1$, it is trivial that if $M\in[T]_1=\add T$ then $\LL(M)\leqslant\LL(T)$.
Consider the induction hypothesis which admit that if $m\le t$ then $\LL(M)\leqslant t\LL(T)$.
Then for $m=t+1$, there is a short exact sequence
\[0 \To{}{} A \To{}{} M \To{}{} B \To{}{} 0\]
such that $A\in [T]_1$ and $B\in [T]_t$ hold for arbitrary $M\in [T]_{t+1}$.
Then, by Lemma \ref{lemm:Loewy length 1}, we have
\[ \LL(M) \leqslant \LL(A)+\LL(B) \leqslant 1\LL(T) + t\LL(T) = (1+t)\LL(T). \]
Thus, this lemma holds by induction.
\end{proof}

Notice that if $T$ is a strong generator of $\mod\itLamb$
and the $m$ is the positive integer satisfying $[T]_m=\mod\itLamb$,
then by Lemma \ref{lemm:Loewy length 2}, the following two statement hold.
\begin{itemize}
  \item[\rm(1)] $\ell\ell(\itLamb)\leqslant m\ell\ell(T)$;
  \item[\rm(2)] for each $M\in [\itLamb/\rad\itLamb]_{m}$, we have $\LL(M)\leqslant m\LL(\itLamb/\rad\itLamb) = m$.
\end{itemize}
The statement (1) holds by $\itLamb\in[T]_m$; and the statement (2) can be shown in the case for $T=\itLamb/\rad\itLamb$.

\begin{proposition}\label{prop:GentTime}
$\gent_{\mod\itLamb}(\itLamb/\rad\itLamb)=\LL(\itLamb)-1.$
\end{proposition}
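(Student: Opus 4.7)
The plan is to prove the equality by establishing two matching inequalities, one for the upper bound and one for the lower bound on the generation time. Write $S := \itLamb/\rad\itLamb$, which is a semisimple module with $\LL(S) = 1$, and let $\ell := \LL(\itLamb)$.

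For the upper bound $\gent_{\mod\itLamb}(S) \leqslant \ell - 1$, I would show by induction on $\LL(M)$ that every $M \in \mod\itLamb$ lies in $[S]_{\LL(M)}$, and then apply this to $M = \itLamb$ (noting $\LL(M) \leqslant \ell$ for every $M \in \mod\itLamb$). The base case $\LL(M) \leqslant 1$ is immediate since $M$ is then semisimple, hence in $\add S = [S]_1$. For the inductive step, consider the short exact sequence
\[ 0 \longrightarrow \rad M \longrightarrow M \longrightarrow M/\rad M \longrightarrow 0, \]
where $M/\rad M$ is semisimple (so lies in $[S]_1$) and $\LL(\rad M) = \LL(M) - 1$ (so by induction lies in $[S]_{\LL(M)-1}$). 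By definition of $\multi$, this places $M$ in $[S]_1 \multi [S]_{\LL(M)-1} = [S]_{\LL(M)}$. Therefore $\itLamb \in [S]_\ell$, i.e., $\mod\itLamb = [S]_\ell$.

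For the lower bound $\gent_{\mod\itLamb}(S) \geqslant \ell - 1$, I would invoke statement (2) recorded right after Lemma \ref{lemm:Loewy length 2}: every object $M \in [S]_m$ satisfies $\LL(M) \leqslant m$. Taking $m = \ell - 1$, this forces $\LL(M) \leqslant \ell - 1$ for every $M \in [S]_{\ell-1}$; but $\LL(\itLamb) = \ell$, so $\itLamb \notin [S]_{\ell-1}$, and hence $[S]_{\ell-1} \neq \mod\itLamb$.

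Combining the two steps, $[S]_\ell = \mod\itLamb \neq [S]_{\ell-1}$, so by Definition \ref{def2.1}, $\gent_{\mod\itLamb}(S) = \ell - 1 = \LL(\itLamb) - 1$. There is no serious obstacle here; the only subtle point is making sure the inductive step correctly uses $[S]_1 \multi [S]_{\LL(M)-1} = [S]_{\LL(M)}$ as dictated by the definition $[\mathcal{U}]_n = [\mathcal{U}]_1 \multi [\mathcal{U}]_{n-1}$ (as emphasized in Example \ref{exp:A4}), rather than any alternative associative splitting.
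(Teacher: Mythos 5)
Your proof is correct and follows essentially the same route as the paper's: the upper bound is obtained by peeling off semisimple layers along the radical filtration (the paper presents the full chain of short exact sequences $0\to\rad^t M\to\rad^{t-1}M\to\rad^{t-1}M/\rad^t M\to 0$ at once, you package it as an induction on $\LL(M)$), and the lower bound is exactly Lemma~\ref{lemm:Loewy length 2} applied with $T=\itLamb/\rad\itLamb$. One small slip worth noting: your exact sequence $0\to\rad M\to M\to M/\rad M\to 0$ literally places $M$ in $[S]_{\LL(M)-1}\multi [S]_1$, not $[S]_1\multi [S]_{\LL(M)-1}$; the two do coincide with $[S]_{\LL(M)}$ by the associativity of $\multi$ quoted from \cite[Proposition~2.2]{dao2014radius}, so the conclusion stands, but your closing remark about avoiding the ``alternative associative splitting'' is in fact pointing the wrong way round.
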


\begin{proof}
The case $\LL(\itLamb)=1$ is trivial. Now consider the
case $\LL(\itLamb)\geqslant 2.$
For each $M\in \mod\itLamb,$
the following family of short exact sequence
\[ (0 \To{}{} \rad^t M \To{}{} \rad^{t-1}M \To{}{} \rad^{t-1}M/\rad^t M \To{}{} 0 )_{1\leqslant t\leqslant \LL(\itLamb)-1} \]
satisfies $\rad^{t-1}M/\rad^t M \in [\itLamb/\rad\itLamb]_1$ for all $1\leqslant t\leqslant \LL(\itLamb)-1$
since all $\rad^{t-1}M/\rad^t M$ are semi-simple.
Then $[\itLamb/\rad\itLamb]_{\LL(\itLamb)} = \mod\itLamb$.
It follows that $\gent_{\mod\itLamb}(\itLamb/\rad\itLamb)\leqslant\LL(\itLamb)-1$.

On the other hand, if $\gent_{\mod\itLamb}(\itLamb/\rad\itLamb)\leqslant\LL(\itLamb)-2$,
then $\itLamb \in \mod\itLamb = [\itLamb/\rad\itLamb]_{\LL(\itLamb)-1}$.
It follows $\LL(\itLamb)\leqslant (\LL(\itLamb)-1)\LL(\itLamb/\rad\itLamb)$
by Lemma \ref{lemm:Loewy length 2}, a contradiction.
\end{proof}


\begin{corollary}\label{coro:LL-udim}
The following two statements hold.
\begin{itemize}
  \item[\rm(1)] $\LL(\itLamb)-1\in \ospec(\itLamb).$
  \item[\rm(2)] $\udim(\itLamb) \geqslant \LL(\itLamb)-1.$
\end{itemize}
\end{corollary}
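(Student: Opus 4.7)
The plan is to derive both parts directly from Proposition \ref{prop:GentTime}, which already pins down the generation time of $\itLamb/\rad\itLamb$ as exactly $\LL(\itLamb)-1$. The corollary is essentially a bookkeeping consequence of that proposition together with the definitions of the Orlov spectrum and the ultimate dimension in Definition \ref{def:OSpec}.

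For part (1), I would first observe that $\gent_{\mod\itLamb}(\itLamb/\rad\itLamb) = \LL(\itLamb)-1$ is a finite natural number, so by Definition \ref{def2.1} the module $\itLamb/\rad\itLamb$ is a strong generator of $\mod\itLamb$. Then, invoking the very definition of $\ospec(\itLamb)$ as the set of generation times of strong generators, the value $\LL(\itLamb)-1$ lies in $\ospec(\itLamb)$. (A degenerate check is needed when $\LL(\itLamb)=1$, i.e.\ when $\itLamb$ is semisimple: then $\itLamb/\rad\itLamb = \itLamb$ already generates $\mod\itLamb$ in one step and its generation time is $0$, matching $\LL(\itLamb)-1=0$.)

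For part (2), since $\udim(\itLamb) := \sup \ospec(\itLamb)$ and $\LL(\itLamb)-1 \in \ospec(\itLamb)$ by part (1), the inequality $\udim(\itLamb) \geqslant \LL(\itLamb)-1$ is immediate from the definition of supremum.

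I do not anticipate any genuine obstacle here; the proof is a one-line deduction from Proposition \ref{prop:GentTime} and the definitions. The only minor subtlety to mention explicitly is that part (1) requires $\itLamb/\rad\itLamb$ to qualify as a strong generator, which is automatic once its generation time is shown to be finite. Thus the write-up will be short: cite Proposition \ref{prop:GentTime}, note strong generation, apply Definition \ref{def:OSpec}, and conclude part (2) by taking the supremum.
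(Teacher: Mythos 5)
Your proof is correct and takes exactly the same route as the paper: cite Proposition \ref{prop:GentTime} to get $\gent_{\mod\itLamb}(\itLamb/\rad\itLamb)=\LL(\itLamb)-1$, conclude membership in $\ospec(\itLamb)$ from Definition \ref{def:OSpec}, and deduce (2) by taking the supremum. Your explicit remark that finiteness of the generation time makes $\itLamb/\rad\itLamb$ a strong generator is a small clarifying addition that the paper leaves implicit.
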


\begin{proof}
By Proposition \ref{prop:GentTime}, we have
$\gent_{\mod\itLamb}(\itLamb/\rad\itLamb) = \LL(\itLamb)-1$
holds for arbitrary Artin algebra $\itLamb$. Then the statement (1) holds by Definition \ref{def:OSpec}.
Furthermore, the statement (2) is obtained by (1) and the definition of ultimate dimension ( see Definition \ref{def:OSpec}) as following formula
\[ \udim(\itLamb) = \sup(\ospec(\itLamb)) \geqslant \LL(\itLamb)-1 \]
as required.
\end{proof}

\subsection{Strong generators}
Recall that the following well-known lemma which will be used to prove Lemma \ref{lemm:pdM=<pdT} in our paper.

\begin{lemma}\label{lemm:ext-pdim}
For any short exact sequence
\[ 0 \To{}{} A \To{}{} B \To{}{} C \To{}{} 0 \]
in $\mod\itLamb$, we have
\begin{center}
$\pd B\leqslant \pd(A\oplus C)$ and $\id B\leqslant\id(A\oplus C)$
\footnote{For any module $X$ in $\mod\itLamb$, ``$\pd X$'' and ``$\id X$'' are projective dimension and injective dimension of $X$, respectively. }
\end{center}
\end{lemma}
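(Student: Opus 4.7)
The plan is to deduce both inequalities from the long exact Ext sequences attached to the given short exact sequence, combined with the elementary identities
\[ \pd(A\oplus C)=\max\{\pd A,\pd C\} \quad\text{and}\quad \id(A\oplus C)=\max\{\id A,\id C\}, \]
which hold because $\Ext^i(A\oplus C,-)\cong\Ext^i(A,-)\oplus\Ext^i(C,-)$ and dually. So it suffices to show $\pd B\leqslant\max\{\pd A,\pd C\}$ and $\id B\leqslant\max\{\id A,\id C\}$.

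First I would handle the projective-dimension inequality. Set $d=\max\{\pd A,\pd C\}$; if $d=\infty$ there is nothing to prove, so assume $d<\infty$. Applying $\Hom_\itLamb(-,X)$ to $0\to A\to B\to C\to 0$ yields, for every $\itLamb$-module $X$ and every integer $i>d$, the exact piece
\[ \Ext^i_\itLamb(C,X)\longrightarrow \Ext^i_\itLamb(B,X)\longrightarrow \Ext^i_\itLamb(A,X). \]
Since $i>\pd C$ forces the left term to vanish and $i>\pd A$ forces the right term to vanish, the middle term is zero for all $X$, hence $\pd B\leqslant d$.

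The injective-dimension inequality is proved by the dual argument: applying $\Hom_\itLamb(X,-)$ to the same short exact sequence and using, for $i>\max\{\id A,\id C\}$, the exact segment
\[ \Ext^i_\itLamb(X,A)\longrightarrow \Ext^i_\itLamb(X,B)\longrightarrow \Ext^i_\itLamb(X,C), \]
whose outer terms vanish, we conclude $\Ext^i_\itLamb(X,B)=0$ for all $X$ and hence $\id B\leqslant\max\{\id A,\id C\}$.

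There is really no main obstacle here; the result is a textbook consequence of the long exact Ext sequence, so the only thing to be careful about is to phrase it correctly when $\pd A$ or $\pd C$ (respectively $\id A$ or $\id C$) is infinite, in which case the right-hand side is $\infty$ and the bound is trivial. I would therefore present the proof by first reducing to the finite case via the additivity of $\pd$ and $\id$ under direct sums, and then invoking the Ext long exact sequence in the two dual directions.
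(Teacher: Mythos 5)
Your proof is correct. The paper itself gives no proof of this lemma, stating it as a well-known fact, so there is no argument in the paper to compare against; your derivation via the long exact sequences in $\Ext$ (applying $\Hom_\itLamb(-,X)$ for the projective bound and $\Hom_\itLamb(X,-)$ for the injective bound) together with the observation that $\pd$ and $\id$ are computed as a maximum over direct summands is exactly the standard argument one would expect, and it is complete and accurate, including the reduction to the case where the right-hand side is finite.
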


\begin{lemma}\label{lemm:pdM=<pdT}
Let $M$ and $T$ be two $\itLamb$-modules such that $M\in[T]_{n+1}$.
Then $\pd M\leqslant \pd T$ and $\id M\leqslant \id T$.
\end{lemma}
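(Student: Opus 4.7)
The plan is to induct on $n$, using the recursive definition $[T]_{n+1}=[T]_1\multi[T]_n$ together with Lemma \ref{lemm:ext-pdim}. I will give the argument for $\pd M\leqslant\pd T$; the argument for $\id M\leqslant\id T$ is entirely dual.

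For the base case $n=0$, we have $M\in[T]_1=\add T$, so $M$ is a direct summand of some $T^{\oplus k}$. Since projective dimension of a finite direct sum equals the maximum of projective dimensions of its summands, we get $\pd M\leqslant \pd T^{\oplus k}=\pd T$. This handles the starting point.

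For the inductive step, assume the inequality $\pd N\leqslant\pd T$ holds whenever $N\in[T]_{k+1}$ for all $k\leqslant n-1$, and consider $M\in[T]_{n+1}$. By the definition of $[T]_{n+1}=[T]_1\multi[T]_n$, the module $M$ is a direct summand of some module $M'$ fitting into a short exact sequence
\[0\longrightarrow A\longrightarrow M'\longrightarrow B\longrightarrow 0\]
with $A\in[T]_1$ and $B\in[T]_n$. The inductive hypothesis (together with the base case applied to $A$) yields $\pd A\leqslant\pd T$ and $\pd B\leqslant\pd T$, hence $\pd(A\oplus B)\leqslant\pd T$. Applying Lemma \ref{lemm:ext-pdim} gives $\pd M'\leqslant\pd(A\oplus B)\leqslant\pd T$, and since $M$ is a direct summand of $M'$ we conclude $\pd M\leqslant\pd M'\leqslant\pd T$.

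The only subtlety to watch out for is the role of the $\add$ in the definition of $[T]_{n+1}$: $M$ itself need not sit as the middle term of an extension with $A\in[T]_1$ and $B\in[T]_n$, only as a direct summand of such a middle term. But this causes no difficulty, since projective (and injective) dimension is monotone under passage to direct summands. I do not expect any genuine obstacle in this proof; the whole argument is a routine induction, and the dual statement for $\id$ follows by replacing Lemma \ref{lemm:ext-pdim}'s first inequality with its second.
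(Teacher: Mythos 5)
Your proof is correct, and both your argument and the paper's rest on exactly the same ingredient, Lemma~\ref{lemm:ext-pdim}, applied along the recursive structure of $[T]_{n+1}$. The difference is purely one of presentation: you formulate a clean induction with hypothesis ``$N\in[T]_{k+1}\Rightarrow\pd N\leqslant\pd T$ for $k\leqslant n-1$,'' apply it directly to $B\in[T]_n$, combine with the base case for $A\in[T]_1$, and finish in one step. The paper instead unrolls the whole tower of short exact sequences
\[(0\to X_i\to Y_{i-1}\to Z_i\to 0)_{1\leqslant i\leqslant n}\]
coming from $M\in[T]_{n+1}$ (with $Z_{i-1}\le_\oplus Y_{i-1}$), and carries a growing direct summand $\bigoplus_{t=1}^i X_t$ along the chain by forming the auxiliary sequences
\[0\to\Big(\bigoplus_{t=1}^i X_t\Big)\oplus X_{i+1}\to\Big(\bigoplus_{t=1}^i X_t\Big)\oplus Y_i\to Z_{i+1}\to 0,\]
then estimates $\pd M=\pd Z_0\leqslant\cdots\leqslant\pd\big((\bigoplus_{t=1}^n X_t)\oplus Y_n\big)\leqslant\pd T$ in a single long display. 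The two proofs are mathematically equivalent; yours packages the bookkeeping into the inductive hypothesis rather than carrying it explicitly through the chain. You also correctly flag the one subtlety — that passing to direct summands is needed because of the $\add$ closure in $\multi$ — which the paper handles implicitly via the notation $Z_{i-1}\le_\oplus Y_{i-1}$.
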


\begin{proof}
First of all, by $M\in [T]_{n+1}$, there is the following family of short exact sequences
\[ (0 \To{}{} X_i \To{}{} Y_{i-1} \To{}{} Z_i \To{}{} 0)_{1\leqslant i\leqslant n} \]
such that $Z_{i-1}\le_{\oplus}Y_{i-1}$ (in this case, $Y_{i-1}\cong Z_{i-1}\oplus Z_{i-1}'$ for some $Z_{i-1}'$),
where $Z_0=M$; $X_1,\ldots, X_n \in [T]_1$; and $Z_i \in [T]_{n-i+1}$.
Then, naturally, we have the following short exact sequences
\[ 0
\To{}{} \bigg(\bigoplus_{t=1}^{i} X_t\bigg) \oplus X_{i+1}
\To{}{} \bigg(\bigoplus_{t=1}^{i} X_t\bigg) \oplus Y_{i}
\To{}{} Z_{i+1}
\To{}{} 0 \]
for any $1\leqslant i\leqslant n-1$.
Then, by Lemma \ref{lemm:ext-pdim} and the above two families of short exact sequences, we have
\begin{align*}
  \pd M & = \pd Z_0 \leqslant \pd Y_0 \leqslant \pd(X_1\oplus Z_1) \\
& \leqslant \pd(X_1\oplus Z_1 \oplus Z_1') = \pd(X_1\oplus Y_1) \\
& \leqslant \pd(X_1\oplus X_2 \oplus Z_2) \\
& \leqslant \pd(X_1\oplus X_2 \oplus Z_2 \oplus Z_2')
  = \pd(X_1\oplus X_2 \oplus Y_2) \\
& \leqslant \pd(X_1\oplus X_2\oplus X_3\oplus Z_3) \\
& \leqslant \pd(X_1\oplus X_2\oplus X_3\oplus Z_3 \oplus Z_3')
  = \pd(X_1\oplus X_2\oplus X_3\oplus Y_3) \\
&  \;\;\;\;\;\vdots \\
& \leqslant \pd\bigg(\bigg(\bigoplus_{t=1}^n X_t\bigg)\oplus Y_n\bigg)
  \mathop{\leqslant}\limits^{\spadesuit} \pd T,
\end{align*}
where $\spadesuit$ holds since all $X_i$ and $Y_n$ are $\itLamb$-modules lying in $[T]_1=\add T$.
We can prove $\id M \leqslant \id T$ by similar way.
\end{proof}

\begin{proposition} \label{prop:pdT=gldimLamb}
If $T$ is a strong generator of $\mod\itLamb$, then the global dimension $\gldim\itLamb$ of $\itLamb$ and the projective dimension $\pd T$ of $T$ coincide.
\end{proposition}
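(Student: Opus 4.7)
The plan is to prove the two inequalities $\pd T \leqslant \gldim\itLamb$ and $\gldim\itLamb \leqslant \pd T$ separately; both are short, the second being essentially a direct invocation of Lemma \ref{lemm:pdM=<pdT}.

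First I would dispatch the inequality $\pd T \leqslant \gldim\itLamb$, which is immediate from the definition of global dimension: since $T$ is itself an object of $\mod\itLamb$, its projective dimension is bounded above by the supremum $\gldim\itLamb = \sup\{\pd M \mid M\in\mod\itLamb\}$.

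For the reverse inequality $\gldim\itLamb \leqslant \pd T$, the argument runs as follows. By hypothesis $T$ is a strong generator of $\mod\itLamb$, so by Definition \ref{def2.1} there exists some $n\in\NN$ with $\mod\itLamb = [T]_{n+1}$. In particular, every $\itLamb$-module $M$ lies in $[T]_{n+1}$. Applying Lemma \ref{lemm:pdM=<pdT} to this containment yields $\pd M \leqslant \pd T$ for every $M\in\mod\itLamb$. Taking the supremum over all $M$ gives $\gldim\itLamb \leqslant \pd T$, and combining the two inequalities gives the desired equality.

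There is no real obstacle here; the work has already been done in Lemma \ref{lemm:pdM=<pdT}, which handled the subtle step of controlling the projective dimension of an object built iteratively by extensions and summands from $T$. The proposition is essentially a clean corollary obtained by pairing that lemma with the definition of a strong generator. One small point worth being explicit about is that $\gldim\itLamb = \sup_{M\in\mod\itLamb}\pd M$ (which is standard for Artin algebras), so that restricting the supremum to finitely generated modules—the only modules covered by the strong generator hypothesis—indeed computes the global dimension.
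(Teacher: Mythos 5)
Your proof is correct and takes essentially the same route as the paper: both inequalities are handled the same way, with Lemma \ref{lemm:pdM=<pdT} doing all the work for $\gldim\itLamb \leqslant \pd T$. The only cosmetic difference is that the paper applies the lemma to the single module $\itLamb/\rad\itLamb$ and invokes $\gldim\itLamb = \pd(\itLamb/\rad\itLamb)$, whereas you apply it to every $M$ and take the supremum.
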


\begin{proof}
Assume $\mod\itLamb=[T]_{n+1}$. Then we have $\pd(\itLamb/\rad\itLamb) \leqslant \pd T$
by Lemma \ref{lemm:pdM=<pdT} since it is clear that $\itLamb/\rad\itLamb$ is a $\itLamb$-module in $\mod\itLamb = [T]_{n+1}$.
It follows that $\gldim\itLamb = \pd(\itLamb/\rad\itLamb) \leqslant \pd T$.
On the other hand, $\pd T \leqslant \gldim \itLamb$ by the definition of global dimension of the Artin algebra $\itLamb$.
Thus, $\pd T=\gldim \itLamb$ as required.
\end{proof}

\begin{corollary} \label{coro:pdT=gldimLamb}
Keep the notations from Proposition \ref{prop:pdT=gldimLamb}, then the following two statements hold.
\begin{itemize}
  \item[\rm(1)] There is an indecomposable module $M$ with
    $\pd M=\gldim \itLamb$ such that $M\in [T]_{1}$.

  \item[\rm(2)] There is an indecomposable module $N$ with
    $\id N=\gldim \itLamb$ such that $N\in [T]_{1}$.
\end{itemize}
\end{corollary}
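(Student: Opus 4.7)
The plan is to combine Proposition \ref{prop:pdT=gldimLamb} with the Krull--Schmidt decomposition of $T$. Since $\itLamb$ is an Artin algebra, $\mod\itLamb$ is Krull--Schmidt, so $T$ admits a finite decomposition $T\cong \bigoplus_{i=1}^{s}T_{i}$ into indecomposable modules, and each summand $T_{i}$ belongs to $\add T=[T]_{1}$.

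For statement (1), Proposition \ref{prop:pdT=gldimLamb} gives $\pd T=\gldim\itLamb$. Since projective dimension of a finite direct sum equals the maximum of the projective dimensions of the summands, we have $\pd T=\max_{1\leqslant i\leqslant s}\pd T_{i}$. Picking an index $i_{0}$ attaining this maximum and setting $M:=T_{i_{0}}$, we obtain an indecomposable module $M\in[T]_{1}$ with $\pd M=\gldim\itLamb$, as desired.

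For statement (2), I would first establish the injective analogue $\id T=\gldim\itLamb$. The argument mirrors that of Proposition \ref{prop:pdT=gldimLamb}: assuming $\mod\itLamb=[T]_{n+1}$, Lemma \ref{lemm:pdM=<pdT} yields $\id(\itLamb/\rad\itLamb)\leqslant \id T$, and for an Artin algebra $\gldim\itLamb=\id(\itLamb/\rad\itLamb)=\sup_{S\text{ simple}}\id S$, while trivially $\id T\leqslant \gldim\itLamb$. Hence $\id T=\gldim\itLamb$. Now repeat the Krull--Schmidt step: $\id T=\max_{i}\id T_{i}$, so some indecomposable summand $N$ satisfies $\id N=\gldim\itLamb$ and $N\in[T]_{1}$.

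There is no real obstacle here; the only point that deserves care is the identity $\gldim\itLamb=\id(\itLamb/\rad\itLamb)$ used in the injective case, which is a standard fact for Artin algebras but should perhaps be cited. Everything else is a direct consequence of Proposition \ref{prop:pdT=gldimLamb}, Lemma \ref{lemm:pdM=<pdT}, and the fact that $\pd$ and $\id$ distribute as maxima over finite direct sums in a Krull--Schmidt category.
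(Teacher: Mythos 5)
Your proposal is correct and follows essentially the same path as the paper: decompose $T$ into indecomposable summands via Krull--Schmidt, use Proposition \ref{prop:pdT=gldimLamb} to get $\pd T = \gldim\itLamb$, observe that $\pd T = \max_i \pd T_i$, and pick a summand achieving the maximum; the paper then remarks that (2) ``is similar,'' which is exactly the injective-dimension argument you spell out.
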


\begin{proof}
We only prove (1), the proof of (2) is similar.
Considering the decomposition $T = \bigoplus_{i=1}^{t} T_i$ of $T$, where $T_1,\ldots, T_t$ are indecomposable,
we obtain \[\gldim\itLamb = \pd T = \max_{1\leqslant i\leqslant t} \pd T_i,\]
that is, there is an index $\imath$ such that $\pd T = \pd T_{\imath}$ as required.
\end{proof}

Next we provide an instance for Corollary \ref{coro:pdT=gldimLamb}.

\begin{example} \rm
Let $\itLamb=\kk\Q/\I$ be the basic algebra whose quiver and admissible ideal is given by
\[ 1 \To{a}{} 2 \To{b}{} 3 \ \text{ and }\ \I=\langle ab\rangle, \]
respectively. One can check that $T:=S(1)\oplus S(2)\oplus S(3)$ is a strong generator of the finitely generated module category $\mod\itLamb$ of $\itLamb$ and the projective dimension of the direct summand $S(1)$ of $T$ equals to $2$.
Moreover, we have $\pd S(2)=1$ and $\pd S(3)=0$.
Therefore, $\pd T = \pd S(1) = \gldim\itLamb = 2$ in this example.
\end{example}

\subsection{Loewy lengths of some Artin alegbras}

In this subsection we consider the Loewy lengths of some Artin algebras. This section containing three parts.
In the parts one, i.e., \ref{subsubsect:simp-proj-inj}, we show that the Loewy length of an {\defines {\rm SPI} Artin algebra},
an Artin algebra whose simple modules are either projective or injective, is less than or equal to two.
In the parts two, i.e., \ref{subsubsect:semisimp}, we provide a description of semi-simple algebra by using Loewy length.
In the last parts, i.e., \ref{subsubsect:spec=0,1}, we prove that the Loewy length of an Artin algebra with Orlov spectrum $\{0,1\}$ equals to two,
and compute the Orlov spectrum of any SPI Artin algebra.

\subsubsection{Loewy lengths of {\rm SPI} Artin algebras} \label{subsubsect:simp-proj-inj}
\begin{proposition}\label{prop:LL=2}
If $\itLamb$ is an {\rm SPI} Artin algebra, then $\LL(\itLamb)\leqslant 2$.
\end{proposition}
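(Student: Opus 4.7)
The goal is to verify $\rad^{2}\itLamb=0$. Since $\itLamb_{\itLamb}=\bigoplus_{i} P(i)$ decomposes into indecomposable projectives and $\rad^{2}\itLamb=\bigoplus_{i}\rad^{2}P(i)$, it suffices to prove $\rad^{2}P=0$ for every indecomposable projective right $\itLamb$-module $P$. Set $S:=\top P$. If $S$ is projective, then by uniqueness of projective covers $P\cong S$, hence $\rad P=0$. Otherwise, by the SPI hypothesis, $S$ must be injective, which is the only substantial case.

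The crux in this case is the key claim that every composition factor of $\rad P$ is projective. To see this, let $T$ be any injective simple module appearing as a composition factor of $P$. A step of a Jordan--H\"older filtration yields a submodule $P'\leqslant P$ together with an epimorphism $P'\twoheadrightarrow T$. Since $T$ is injective, the inclusion $P'\hookrightarrow P$ lifts this to a nonzero morphism $\varphi\colon P\to T$. Because $T$ is simple, $\varphi(\rad P)=\varphi(P)\cdot\rad\itLamb\subseteq \rad T=0$, so $\varphi$ factors through $\top P=S$, forcing $T\cong S$. Moreover $\Hom_{\itLamb}(P,S)=\Hom_{\itLamb}(S,S)=\End(S)$, so $S$ appears in $P$ with multiplicity exactly one, and that single occurrence is the top. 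Hence $\rad P$ contains no injective composition factor, and by SPI every composition factor of $\rad P$ is projective.

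With the key claim established, the proposition follows formally. The semisimple quotient $\rad P/\rad^{2}P$ is a direct sum of projective simples, hence projective, so the short exact sequence
\[ 0\to \rad^{2}P\to \rad P\to \rad P/\rad^{2}P\to 0 \]
splits. Writing $\rad P=\rad^{2}P\oplus B$ with $B$ semisimple, and applying $\rad$, we obtain
\[ \rad^{2}P=\rad(\rad P)=\rad(\rad^{2}P)\oplus\rad B=\rad^{3}P\oplus 0=\rad^{3}P. \]
Since $\rad^{3}P=\rad^{2}P\cdot\rad\itLamb$, Nakayama's lemma yields $\rad^{2}P=0$, and the proof is complete.

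The only step demanding genuine care is the extension-and-factorization argument identifying the injective composition factors of $P$ with the top $S$; once that is in place, the splitting of the radical filtration plus Nakayama's lemma closes the argument without further effort.
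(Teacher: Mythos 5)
Your proof is correct but takes a genuinely different route from the paper. The paper argues by contradiction: assuming $\LL(\itLamb)=n\geqslant 3$, it writes the semisimple quotient $\rad P/\rad^{2}P$ as $Q\oplus E$ with $Q$ projective and $E$ injective (this is exactly what the SPI hypothesis gives), then constructs a pullback square whose outer $3\times 3$ diagram forces $\rad P\cong \rad^{2}P\oplus Q\oplus E$ --- using the projectivity of $Q$ to split one short exact sequence and the injectivity of $E$ to split another --- and finally reads off the contradiction $n-1=\LL(\rad P)=\max\{\LL(\rad^{2}P),\LL(Q),\LL(E)\}=n-2$. You instead prove the sharper structural fact that when $\top P$ is injective, it is the \emph{only} injective composition factor of $P$ and occurs with multiplicity one, forcing every composition factor of $\rad P$ to be projective; then $\rad P/\rad^{2}P$ is itself projective, the radical filtration splits at that stage, and Nakayama finishes. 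Your extension-and-factorization lemma is a pleasant extra piece of information about SPI algebras that the paper never extracts, and it lets you avoid the explicit pullback diagram. On the other hand, it is strictly more work than the task requires: as the paper's proof shows, one does not need $\rad P/\rad^{2}P$ to be purely projective --- the mixed decomposition $Q\oplus E$ already splits off from $\rad P$ because $Q$ is projective and $E$ is injective, and the same direct-sum formula $\rad P\cong\rad^{2}P\oplus(\text{semisimple})$ would feed into your Nakayama step just as well. So the two arguments buy different things: the paper's is economical, yours yields an additional structural corollary about the composition series of indecomposable projectives over an SPI algebra.
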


\begin{proof}
Assume that there is an SPI Artin algebra $\itLamb$ such that all simple modules are either projective or injective and $\LL(\itLamb)=n\geqslant 3$.
Then there is a projective module $P$ with $\LL(P)=n$.
In particular, we have
$\LL(\rad P)=n-1$ and $\LL(\rad^{2} P)=n-2$.
The following short exact sequence
\[0 \longrightarrow \rad^{2} P \longrightarrow  \rad P\longrightarrow  \rad P/\rad^2 P \longrightarrow 0\]
admits $\rad P/\rad^2 P = Q\oplus E$ such that $Q$ is the sum of some simple projective modules and $E$ is the sum of some simple injective modules by our assumption.
Then the pullback of
\[ \xymatrix@C=2cm{
  & Q \ar[d]^{\le_{\oplus}} \\
\rad P \ar[r]_{
\begin{smallmatrix}
\text{canonical} \\ \text{epimorphism}
\end{smallmatrix}
}
  & \rad P/\rad^2 P
} \]
is of the following form.
\begin{gather}
\begin{split}
\xymatrix{
          &                     & 0 \ar[d]                 &   0       \ar[d]                        &  \\
       0\ar[r]   &  \rad^{2} P \ar[r]\ar@{=}[d] & W\ar[r]\ar[d]&     {\color{red}Q}\ar[r]\ar[d]     & 0  \\
0\ar[r]&\rad^{2} P \ar[r]    & {\color{red}\rad P }\ar[r]\ar[d]&{\color{red} \rad P/\rad^{2} P}\ar[d]\ar[r]&0  \\
& & E \ar@{=}[r]\ar[d]& E\ar[d]&  \\
           &                                 &0                             & 0                                  & }
\end{split}.
\end{gather}
Since $Q$ is projective and $E$ is injective, we have
$W\cong \rad^{2} P\oplus Q$ and $\rad P \cong W \oplus E$.
Thus, we have
\begin{align*}
  n-1 & =\LL(\rad P) = \LL(W\oplus E) = \LL(\rad^{2} P\oplus Q\oplus E) \\
   &=\max\{ \LL(\rad^{2} P) , \LL(Q), \LL(E)\} = n-2
\end{align*}
by $n\geqslant 3$, a contradiction.
\end{proof}

\begin{example}\rm
The inverse of Proposition \ref{prop:LL=2} maybe not hold. For example, let the path algebra
$\itLamb=\kk\Q/\langle \alpha\beta,\beta\alpha\rangle$,
where the quiver $Q$ as follows
$$\xymatrix{
  1\ar@/^/[r]^{\alpha}&2\ar@/^/[l]^{\beta}.
    }$$
In this case, we have $\LL(\itLamb)=2$, but $S(1)$ and $S(2)$ are neither projective nor injective.
\end{example}

\subsubsection{Loewy lengths of semi-simple Artin algebras} \label{subsubsect:semisimp}

\begin{lemma}\label{lemm:0-in-ospec}
Assume that $\itLamb$ is a representation-finite Artin algebra which is not semi-simple. Then $\{0,1\} \subseteq \ospec(\itLamb)$.
\end{lemma}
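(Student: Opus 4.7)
The plan is to exhibit two explicit strong generators of $\mod\itLamb$ whose generation times are $0$ and $1$, respectively.

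To see that $0 \in \ospec(\itLamb)$, I would use the representation-finiteness hypothesis. Let $I$ be the direct sum of a complete set of pairwise non-isomorphic indecomposable $\itLamb$-modules (a finite set by assumption). By the Krull--Schmidt theorem for $\mod\itLamb$, every finitely generated $\itLamb$-module is a finite direct sum of indecomposables, hence a direct summand of some $I^{\oplus m}$. Therefore $\mod\itLamb=\add I=[I]_1$, so $\gent_{\mod\itLamb}(I)=0$ by Definition~\ref{def2.1}, and $0\in\ospec(\itLamb)$.

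To see that $1 \in \ospec(\itLamb)$, I would use the non-semi-simplicity to select one indecomposable to omit. Since $\itLamb$ is not semi-simple, there exists an indecomposable $\itLamb$-module $M$ that is not simple, so that $\rad M\ne 0$ and $M/\rad M\ne 0$ are both proper. Let $X$ be the direct sum of all pairwise non-isomorphic indecomposables other than $M$. I claim $\gent_{\mod\itLamb}(X)=1$. First, $[X]_1\ne \mod\itLamb$: by Krull--Schmidt and the indecomposability of $M$, the module $M$ is not a direct summand of any object in $\add X$, so $M\notin [X]_1$. Second, $[X]_2=\mod\itLamb$: every indecomposable $\ne M$ lies in $[X]_1\subseteq [X]_2$ trivially, and for $M$ itself, the canonical exact sequence
\[ 0\longrightarrow \rad M\longrightarrow M\longrightarrow M/\rad M\longrightarrow 0 \]
realizes $M$ as an extension of two modules in $\add X=[X]_1$. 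Indeed, if $M$ were an indecomposable summand of $\rad M$, then $M\hookrightarrow \rad M\subsetneq M$, a contradiction; similarly $M$ is not a summand of the proper quotient $M/\rad M$. Hence every indecomposable summand of $\rad M$ and of $M/\rad M$ belongs to the set defining $X$, so $\rad M, M/\rad M\in\add X$, and therefore $M\in [X]_2$. Combining, $[X]_2=\mod\itLamb$, so $\gent_{\mod\itLamb}(X)=1$ and $1\in\ospec(\itLamb)$.

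The argument involves no essential obstacle. The only point requiring care is the verification that $\rad M$ and $M/\rad M$ have no indecomposable summand isomorphic to $M$, which is immediate from the strict inequality of composition lengths $\length(\rad M)<\length(M)$ and $\length(M/\rad M)<\length(M)$ together with Krull--Schmidt.
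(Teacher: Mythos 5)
Your proof is correct and follows essentially the same strategy as the paper: for $0\in\ospec(\itLamb)$ both take the direct sum of all indecomposables, and for $1\in\ospec(\itLamb)$ both omit a single indecomposable and recover it as an extension of its radical by its top. The only difference is the choice of the omitted module: the paper takes an indecomposable projective $P$ of maximal Loewy length (which automatically has $\rad P\ne 0$ since $\itLamb$ is not semi-simple), whereas you allow any non-simple indecomposable $M$ and justify $\rad M, M/\rad M\in\add X$ via composition length; both choices are valid, and yours is marginally more self-contained in that it does not appeal to Loewy lengths at all.
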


\begin{proof}
Let $\mathsf{ind}(\mod\itLamb)$ be the set of all indecomposable $\itLamb$-modules (up to isomorphism).
Since $\itLamb$ is representation-finite, that is, $\mathsf{ind}(\mod\itLamb)$ is a finite set, it is easy to see that
\[\displaystyle T=\bigoplus_{X\in\mathsf{ind}(\mod\itLamb)} X\]
is a strong generator of $\mod\itLamb$, and, in this case, we obtain $[T]_1=\mod\itLamb$.
Thus, $0\in\ospec(\itLamb)$.

On the other hand, there is an indecomposable projective module $P$ whose Loewy length $\LL(P)$ equals to the Loewy length $\LL(\itLamb)$ of $\itLamb$. Let
\[T' = \bigoplus_{X\in \mathsf{ind}(\mod\itLamb)\backslash\{P\}} X, \]
then the following short exact sequence
\[ 0 \To{}{} \rad P \To{}{} P \To{}{} P/\rad P \To{}{} 0 \]
shows that $P \in [T']_2$ since $\rad P,P/\rad P\in [T']$. On the other hand,
$P\notin [T']_1.$
That is, $[T']_{1}\subsetneq [T']_{2}=\mod \itLamb$.
It follows $1\in\ospec(\itLamb)$.
\end{proof}

\begin{theorem}\label{thm:semi-simp}
For an Artin algebra $\itLamb$, the following statement are equivalent.
\begin{itemize}
  \item[\rm(1)] $\itLamb$ is semi-simple.
  \item[\rm(2)] $\LL(\itLamb)=1$.
  \item[\rm(3)] $\ospec(\itLamb)=\{0\}$.
\end{itemize}
\end{theorem}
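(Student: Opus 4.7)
The plan is to establish the three implications (1) $\Leftrightarrow$ (2), (2) $\Rightarrow$ (3), and (3) $\Rightarrow$ (1), each of which is short once we interpret the definitions carefully. The equivalence (1) $\Leftrightarrow$ (2) is essentially definitional: semi-simplicity of $\itLamb$ is the vanishing of $\rad\itLamb$, which is exactly the statement $\LL(\itLamb)=1$ (since we may exclude the trivial case $\itLamb=0$ giving $\LL=0$). The two remaining directions are where the actual work happens.

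For (2) $\Rightarrow$ (3), the key observation is that when $\LL(\itLamb)=1$, every finitely generated $\itLamb$-module is semi-simple (since $M\cdot\rad\itLamb=0$), hence every short exact sequence in $\mod\itLamb$ splits. Consequently, for any subcategories $\mathcal{U},\V$ one has $\mathcal{U}\multi\V=\add(\mathcal{U}\cup\V)$, which forces $[T]_n=\add T$ for every $T\in\mod\itLamb$ and every $n\geqslant 1$. Thus a strong generator $T$ must satisfy $\mod\itLamb=[T]_1=\add T$, giving $\gent_{\mod\itLamb}(T)=0$. So $\ospec(\itLamb)\subseteq\{0\}$. On the other hand $\itLamb/\rad\itLamb=\itLamb$ is a strong generator (Proposition \ref{prop:GentTime} yields $\gent(\itLamb/\rad\itLamb)=\LL(\itLamb)-1=0$), so $0\in\ospec(\itLamb)$, and equality follows.

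For (3) $\Rightarrow$ (1), I would first use the hypothesis to produce a strong generator $X$ with $\gent(X)=0$, i.e., $\mod\itLamb=[X]_1=\add X$. Since $X$ is a finitely generated $\itLamb$-module, $\add X$ contains only finitely many isomorphism classes of indecomposables, so $\itLamb$ is representation-finite. Now if $\itLamb$ were not semi-simple, Lemma \ref{lemm:0-in-ospec} would give $\{0,1\}\subseteq \ospec(\itLamb)$, contradicting $\ospec(\itLamb)=\{0\}$. Hence $\itLamb$ is semi-simple.

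There is no real obstacle in this argument; the only point that requires a moment's care is justifying in (2) $\Rightarrow$ (3) that \emph{every} strong generator (not merely $\itLamb/\rad\itLamb$) has generation time zero, which is handled by the splitting of all short exact sequences over a semi-simple algebra. The rest is an immediate assembly of Proposition \ref{prop:GentTime}, Lemma \ref{lemm:0-in-ospec}, and the definitions.
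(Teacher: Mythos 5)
Your proposal is correct and takes essentially the same approach as the paper: (1)$\Leftrightarrow$(2) is definitional, (2)$\Rightarrow$(3) rests on the fact that over a semi-simple algebra $[T]_n=\add T$ for every $n\geqslant 1$, and (3)$\Rightarrow$(1) applies Lemma \ref{lemm:0-in-ospec}. You are somewhat more explicit at two points than the paper's version: you justify $[T]_n=\add T$ via the splitting of all short exact sequences rather than by observing which $T$ can contain all simples, and in (3)$\Rightarrow$(1) you correctly note that $0\in\ospec(\itLamb)$ already forces representation-finiteness, which is a hypothesis of Lemma \ref{lemm:0-in-ospec} that the paper invokes without checking.
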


\begin{proof}
First of all, since $\LL(\itLamb)=1$, we obtain $\LL(\itLamb/\rad\itLamb)=0$.
It follows $\itLamb/\rad\itLamb=0$, i.e., $\itLamb$ is semi-simple. Thus, (2) admits (1).

Second, if $\itLamb$ is semi-simple, then all indecomposable $\itLamb$-modules are simple modules which are direct summands of $\itLamb$.
It is easy to see that $\LL(\itLamb)=1$, Thus, (1) and (2) are equivalent.

Third, we show that (1) admits (3). Notice that semi-simple Artin algebra are always representation-finite.
Thus, we can assume that $\itLamb=\bigoplus_{i=1}^{n} S_i$ is basic up to Morita equivalence,
where $S_1,\ldots S_n$ are simple.
Then we obtain that each basic $\itLamb$-module $M$ is isomorphic to
\[ \bigoplus_{i=1}^n S_i^{\oplus t_i},\ \ (t_i\in\{0,1\}) \]
which lies in $[\itLamb]_1=\mod\itLamb$. Thus, $0\in\ospec(\itLamb)$.
However, if a $\itLamb$-module $T$ does not contain some simple module $S_i$ as its direct summand (up to isomorphism),
then $[T]_{n}\subsetneq\mod\itLamb$ holds for all $n\in\NN$. Then $\ospec(\itLamb)=\{0\}$.

Finally, it is easy to prove that $\itLamb$ is semi-simple if $\ospec(\itLamb)=\{0\}$ by Lemma \ref{lemm:0-in-ospec}. Therefore, the statements (1), (2) and (3) are equivalent.
\end{proof}

\subsubsection{Loewy length of Artin algebras with Orlov spectra $\{0,1\}$} \label{subsubsect:spec=0,1}

The following result shows that the Loewy length of an Artin algebra with Orlov spectrum $\{0,1\}$ equals to two.

\begin{proposition} \label{prop:OSpec=(0,1)}
If $\ospec(\itLamb) = \{0,1\}$, then $\LL(\itLamb)=2$.
\end{proposition}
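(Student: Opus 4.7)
The proof should be quite short, as the two preceding results (Theorem \ref{thm:semi-simp} and Corollary \ref{coro:LL-udim}) already give bounds on $\LL(\itLamb)$ in terms of information extracted from $\ospec(\itLamb)$. My plan is to sandwich $\LL(\itLamb)$ between $2$ and $2$ using these two tools.

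For the lower bound, I would invoke Theorem \ref{thm:semi-simp}: the hypothesis $\ospec(\itLamb) = \{0,1\}$ implies in particular $\ospec(\itLamb) \ne \{0\}$, so the equivalence (1)$\Leftrightarrow$(3) of that theorem shows $\itLamb$ is not semi-simple. Then the equivalence (1)$\Leftrightarrow$(2) forces $\LL(\itLamb) \ne 1$. Since $\LL(\itLamb)$ is a nonnegative integer and $\itLamb\ne 0$, we conclude $\LL(\itLamb) \geqslant 2$.

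For the upper bound, I would apply Corollary \ref{coro:LL-udim}(1), which asserts $\LL(\itLamb) - 1 \in \ospec(\itLamb)$. Because $\ospec(\itLamb) = \{0,1\}$, this membership forces $\LL(\itLamb) - 1 \leqslant 1$, i.e., $\LL(\itLamb) \leqslant 2$. Combining the two inequalities gives $\LL(\itLamb) = 2$.

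There is essentially no obstacle: the proposition follows immediately from chaining Theorem \ref{thm:semi-simp} and Corollary \ref{coro:LL-udim}. The only subtlety worth flagging explicitly in the written proof is that the upper-bound step uses $\LL(\itLamb) - 1 \in \ospec(\itLamb)$ as an element, not merely as a bound on $\extdim\itLamb$; this is why the Orlov spectrum (rather than just the extension dimension) is the right invariant to control $\LL(\itLamb)$ from above in this setting.
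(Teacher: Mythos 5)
Your proof is correct and follows essentially the same route as the paper: the upper bound $\LL(\itLamb)\leqslant 2$ comes from Corollary \ref{coro:LL-udim}(1) (equivalently Proposition \ref{prop:GentTime}), and the lower bound $\LL(\itLamb)\geqslant 2$ comes from Theorem \ref{thm:semi-simp}. The only cosmetic difference is that the paper phrases the upper bound contrapositively (assuming $\LL(\itLamb)\geqslant 3$ and deriving a contradiction), whereas you phrase it directly via the membership $\LL(\itLamb)-1\in\ospec(\itLamb)$.
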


\begin{proof}
If $\LL(\itLamb)=n \geqslant 3$, then $n\in\ospec(\itLamb)$ by Proposition \ref{prop:GentTime} (or Corollary \ref{coro:LL-udim} (1)), a contradiction. Thus, $\LL(\itLamb)\leqslant 2$.
By Theorem \ref{thm:semi-simp} and $\ospec(\itLamb) = \{0,1\}$, we obtain that $\itLamb$ is not semi-simple, i.e., $\LL(\itLamb)\geqslant 2$.
Thus, $\LL(\itLamb)=2$.
\end{proof}

\begin{theorem}\label{thm:non semi-simp SPI}
Let $\itLamb$ be an {\rm SPI} Artin algebra which is not semi-simple.
\begin{itemize}
  \item[\rm(1)] If $\itLamb$ is representation-finite,
    then $\ospec(\itLamb)=\{0,1\}.$

  \item[\rm(2)] If $\itLamb$ is representation-infinite,
    then $\ospec(\itLamb)=\{1\}.$
\end{itemize}
\end{theorem}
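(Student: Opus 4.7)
The plan is to first pin down the Loewy length, then derive a uniform upper bound $\gent_{\mod\itLamb}(T)\leqslant 1$ valid for \emph{every} strong generator $T$, and finally split on the representation type of $\itLamb$ to extract the exact spectrum. I would start by combining Proposition \ref{prop:LL=2} (which gives $\LL(\itLamb)\leqslant 2$ from the SPI hypothesis) with Theorem \ref{thm:semi-simp} (which excludes $\LL(\itLamb)=1$ via the non-semi-simplicity hypothesis). Together these pin down $\LL(\itLamb)=2$, so in particular $\rad^{2}\itLamb=0$.

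The key and most substantive step is the uniform upper bound. The argument I have in mind runs as follows: because $\itLamb$ is SPI, every simple $\itLamb$-module is projective or injective, so Proposition \ref{simple_projective} forces each simple, and hence $\itLamb/\rad\itLamb$, to lie in $\add T=[T]_{1}$; Remark \ref{inc-Sub} then promotes this to $[\itLamb/\rad\itLamb]_{n}\subseteq[T]_{n}$ for all $n$. Since $\LL(\itLamb)=2$, the construction in the proof of Proposition \ref{prop:GentTime} shows $[\itLamb/\rad\itLamb]_{2}=\mod\itLamb$, hence $[T]_{2}=\mod\itLamb$ and $\gent_{\mod\itLamb}(T)\leqslant 1$. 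This yields $\ospec(\itLamb)\subseteq\{0,1\}$ in both cases.

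Splitting into cases is then routine. For (1), representation-finiteness combined with non-semi-simplicity and Lemma \ref{lemm:0-in-ospec} already gives $\{0,1\}\subseteq\ospec(\itLamb)$, and the upper bound turns this into $\ospec(\itLamb)=\{0,1\}$. For (2), representation-infiniteness excludes $0$ from $\ospec(\itLamb)$, since $\gent_{\mod\itLamb}(T)=0$ would force $\add T=\mod\itLamb$; meanwhile $T_{0}:=\itLamb/\rad\itLamb$ is a strong generator with $\gent_{\mod\itLamb}(T_{0})=\LL(\itLamb)-1=1$ by Proposition \ref{prop:GentTime}, so $1\in\ospec(\itLamb)$ and therefore $\ospec(\itLamb)=\{1\}$. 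The main obstacle is the uniform upper bound in the middle paragraph: the SPI hypothesis is used precisely at the point where every simple module must be forced to be a summand of an \emph{arbitrary} strong generator, and it is this that transfers the sharp bound $\gent_{\mod\itLamb}(\itLamb/\rad\itLamb)=1$ to the whole class of strong generators.
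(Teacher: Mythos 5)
Your argument is correct and follows essentially the same route as the paper: use Proposition \ref{simple_projective} to force $\itLamb/\rad\itLamb$ into $\add T$ for every strong generator $T$, combine Proposition \ref{prop:GentTime} with $\LL(\itLamb)=2$ (Proposition \ref{prop:LL=2}) to get $[T]_2\supseteq[\itLamb/\rad\itLamb]_2=\mod\itLamb$ and hence $\gent_{\mod\itLamb}(T)\leqslant 1$, then split on representation type using Lemma \ref{lemm:0-in-ospec} for (1) and the trivial exclusion of $0$ for (2). The only cosmetic differences are that the paper packages the inclusion $[\itLamb/\rad\itLamb]_n\subseteq[T]_n$ as Lemma \ref{time1} (after noting $T\cong(\itLamb/\rad\itLamb)\oplus T'$) and passes through $\udim$ via Corollary \ref{coro:LL-udim}(2) rather than citing Proposition \ref{prop:GentTime} directly, while your version spells out explicitly why $0\notin\ospec(\itLamb)$ in the representation-infinite case, a step the paper leaves implicit.
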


\begin{proof}
Assume $\mod\itLamb=[T]_{n+1}$ for some $n\in\NN$. By Property \ref{simple_projective},
all simple modules lie in $[T]_1=\add T$ since every simple module is either a projective module or an injective module.
Thus, $T \cong (\itLamb/\rad\itLamb)\oplus T'$ for some module $T'$.
Then we obtain
\begin{align*}
\gent_{\text{mod}\itLamb}(T)&=\gent_{\text{mod}\itLamb}((\itLamb/\rad\itLamb)\oplus T') \\
    &\leqslant\gent_{\text{mod}\itLamb}(\itLamb/\rad\itLamb) \tag{\text{by Lemma } \ref{time1}} \\
    &=\LL(\itLamb)-1
     \tag{\text{Proposition } \ref{prop:GentTime}} \\
    &=2-1 = 1. \tag{\text{Proposition } \ref{prop:LL=2}}
\end{align*}
It follows that $\udim(\mod\itLamb) = \sup(\ospec(\itLamb))\leqslant 1$.
On the other hand,
$\udim(\mod\itLamb) \geqslant \LL(\itLamb)-1 = 1$ by Corollary \ref{coro:LL-udim} (2).
That is, $\udim(\mod\itLamb) = 1$. Thus, $1\in\ospec(\itLamb)$.

If $\itLamb$ is representation-finite, then,
by Lemma \ref{lemm:0-in-ospec},
we have $\ospec(\itLamb)=\{0,1\}$, that is, the statement (1) holds;
otherwise, $\ospec(\itLamb)=\{1\}$, that is, the statement (2) holds.
\end{proof}

Next, we provide some examples for Theorem \ref{thm:non semi-simp SPI}.

\begin{example} \rm
Consider the hereditary algebra $\itLamb=\vec{A}_2=\kk\Q$ with $\Q= 1\To{\alpha}{} 2$.
It is representation-finite, to be more precise, the number of isoclasses of indecomposable module equals to $3$.
Since the simple module $S(2)$ is projective and the simple module $S(1)$ is injective,
we have $\ospec(\itLamb)=\{0,1\}$ by Theorem \ref{thm:non semi-simp SPI} (1).
\end{example}

\begin{example} \label{exp:non semi-simp SPI} \rm
(1) Consider the path algebra $\itLamb=\kk\Q$ given by the following quiver shown in Figure \ref{fig:quiver_in_Example240623}.
\begin{figure}[htbp]
\centering
\centering
\begin{tikzpicture}[scale=1]
\draw[line width=1pt][rotate=  0][->] (3.25,-0.30) -- ( 0.20,-0.01);
\draw[line width=1pt][rotate= 45][->] (3.25,-0.30) -- ( 0.20,-0.01);
\draw[line width=1pt][rotate= 90][->] (3.25,-0.30) -- ( 0.20,-0.01);
\draw[line width=1pt][rotate=135][->] (3.25,-0.30) -- ( 0.20,-0.01);
\draw[line width=1pt][rotate=180][->] (3.25,-0.30) -- ( 0.20,-0.01);
\draw[line width=1pt][rotate=225][->] (3.25,-0.30) -- ( 0.20,-0.01) [dotted];
\draw[line width=1pt][rotate=270][->] (3.25,-0.30) -- ( 0.20,-0.01) [dotted];
\draw[line width=1pt][rotate=315][->] (3.25,-0.30) -- ( 0.20,-0.01);
\draw[rotate=  0] (1.71, 0.05) node {\small{$\alpha_1$}};
\draw[rotate= 45] (1.71, 0.05) node {\small{$\alpha_2$}};
\draw[rotate= 90] (1.71, 0.05) node {\small{$\alpha_3$}};
\draw[rotate=135] (1.71, 0.05) node {\small{$\alpha_4$}};
\draw[rotate=180] (1.71, 0.05) node {\small{$\alpha_5$}};
\draw[rotate=315] (1.71, 0.05) node {\small{$\alpha_{n-1}$}};
\draw[rotate=  0] (3.45,-0.31) node{\small\rotatebox{0}{$1$}};
\draw[rotate=  0] (0,0) node{\small\rotatebox{0}{$0$}};
\draw[rotate= 45] (3.45,-0.31) node{\small\rotatebox{0}{$2$}};
\draw[rotate= 90] (3.45,-0.31) node{\small\rotatebox{0}{$3$}};
\draw[rotate=135] (3.45,-0.31) node{\small\rotatebox{0}{$4$}};
\draw[rotate=180] (3.45,-0.31) node{\small\rotatebox{0}{$5$}};
\draw[rotate=225] (3.45,-0.31) node{\small\rotatebox{0}{$\cdots$}};
\draw[rotate=270] (3.45,-0.31) node{\small\rotatebox{0}{$\cdots$}};
\draw[rotate=315] (3.45,-0.31) node{\small\rotatebox{0}{$n$}};
\end{tikzpicture}
\caption{The path algebra in Example \ref{exp:non semi-simp SPI} (1)}
\label{fig:quiver_in_Example240623}
\end{figure}
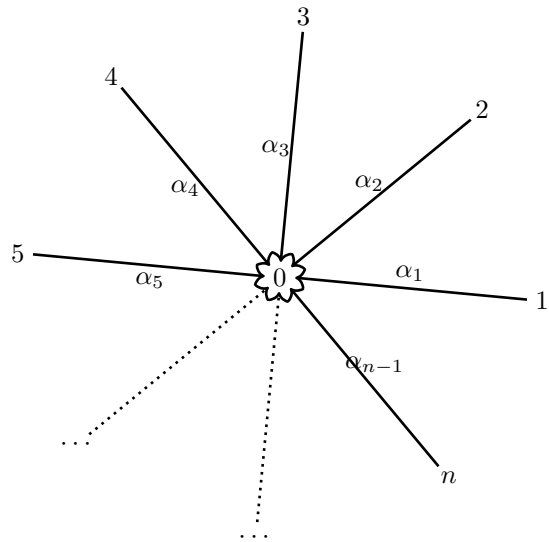
It is clearly that $\itLamb$ is a hereditary algebra,
and then $\itLamb$ is representation-infinite since
the underlying graph of $\Q$ is not a Dynkin type
(cf. \cite[Chap VII, Section 2]{assem2006elements}).
Moreover, the simple module $S(0)$ is projective
and, for any $1\leqslant i\leqslant n$, the simple module $S(i)$ is injective.
Therefore, $\ospec(\Lambda)=\{1\}$ by Theorem \ref{thm:non semi-simp SPI} (2).

(2) Consider the path algebra $\Lambda=\kk\Q$ given by the quiver shown in Figure \ref{fig:path alg}, where $t$ is an even number with $t\geqslant 4$.
Now we compute its Orlov spectrum.
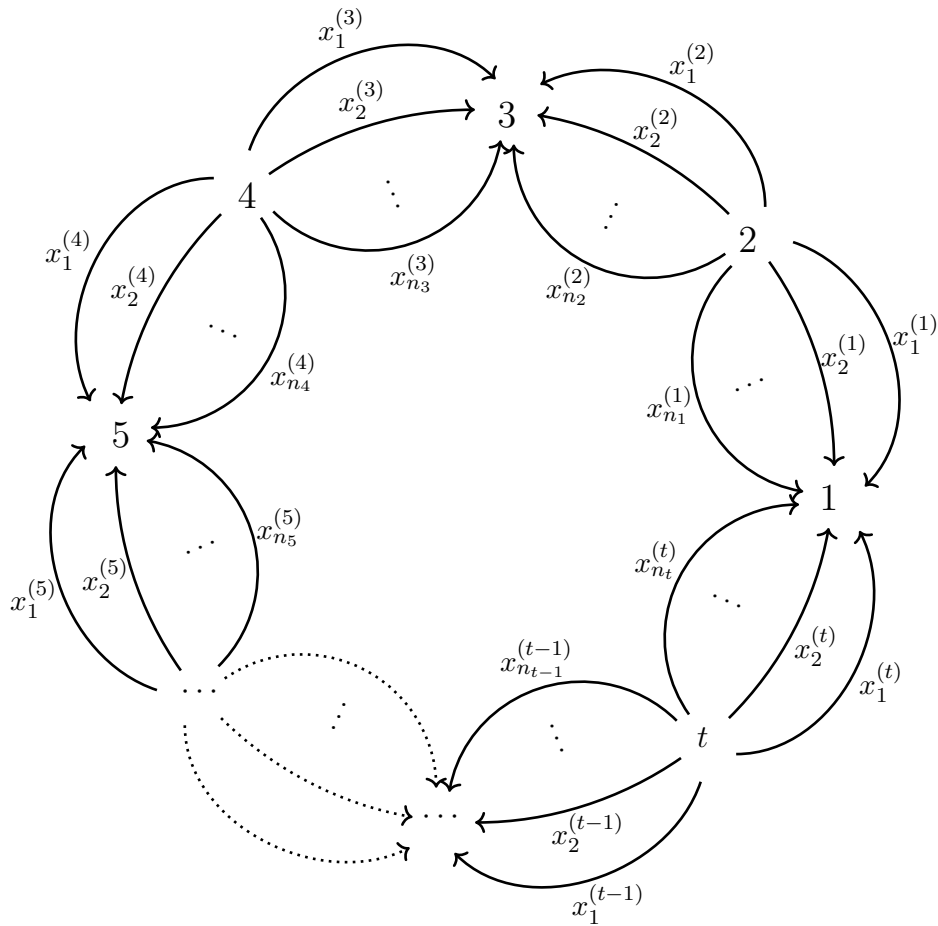
\begin{figure}[htpb]
\centering
\begin{tikzpicture}[scale=1.35]
\draw[<-][line width=1pt][rotate=  0] (3.5,0) arc (0:35:3.5);
\draw[->][line width=1pt][rotate= 45] (3.5,0) arc (0:35:3.5);
\draw[<-][line width=1pt][rotate= 90] (3.5,0) arc (0:35:3.5);
\draw[->][line width=1pt][rotate=135] (3.5,0) arc (0:35:3.5);
\draw[<-][line width=1pt][rotate=180] (3.5,0) arc (0:35:3.5);
\draw[->][line width=1pt][rotate=225] (3.5,0) arc (0:35:3.5)[dotted];
\draw[<-][line width=1pt][rotate=270] (3.5,0) arc (0:35:3.5);
\draw[->][line width=1pt][rotate=315] (3.5,0) arc (0:35:3.5);
\draw[rotate=  0] (3.45,-0.31) node{\Large\rotatebox{0}{$1$}};
\draw[rotate= 45] (3.45,-0.31) node{\Large\rotatebox{0}{$2$}};
\draw[rotate= 90] (3.45,-0.31) node{\Large\rotatebox{0}{$3$}};
\draw[rotate=135] (3.45,-0.31) node{\Large\rotatebox{0}{$4$}};
\draw[rotate=180] (3.45,-0.31) node{\Large\rotatebox{0}{$5$}};
\draw[rotate=225] (3.45,-0.31) node{\large\rotatebox{0}{$\cdots$}};
\draw[rotate=270] (3.45,-0.31) node{\large\rotatebox{0}{$\cdots$}};
\draw[rotate=315] (3.45,-0.31) node{\large\rotatebox{0}{$t$}};
\draw[<-][line width=1pt][rotate=  0] (3.8,-0.2) to[out=45,in=-18.5] (3.1,2.2);
\draw[->][line width=1pt][rotate= 45] (3.8,-0.2) to[out=45,in=-18.5] (3.1,2.2);
\draw[<-][line width=1pt][rotate= 90] (3.8,-0.2) to[out=45,in=-18.5] (3.1,2.2);
\draw[->][line width=1pt][rotate=135] (3.8,-0.2) to[out=45,in=-18.5] (3.1,2.2);
\draw[<-][line width=1pt][rotate=180] (3.8,-0.2) to[out=45,in=-18.5] (3.1,2.2);
\draw[->][line width=1pt][rotate=225] (3.8,-0.2) to[out=45,in=-18.5] (3.1,2.2)[dotted];
\draw[<-][line width=1pt][rotate=270] (3.8,-0.2) to[out=45,in=-18.5] (3.1,2.2);
\draw[->][line width=1pt][rotate=315] (3.8,-0.2) to[out=45,in=-18.5] (3.1,2.2);
\draw[<-][line width=1pt][rotate=  0] (3.2,-0.25) arc(-100:-225:1.31);
\draw[->][line width=1pt][rotate= 45] (3.2,-0.25) arc(-100:-225:1.31);
\draw[<-][line width=1pt][rotate= 90] (3.2,-0.25) arc(-100:-225:1.31);
\draw[->][line width=1pt][rotate=135] (3.2,-0.25) arc(-100:-225:1.31);
\draw[<-][line width=1pt][rotate=180] (3.2,-0.25) arc(-100:-225:1.31);
\draw[->][line width=1pt][rotate=225] (3.2,-0.25) arc(-100:-225:1.31)[dotted];
\draw[<-][line width=1pt][rotate=270] (3.2,-0.25) arc(-100:-225:1.31);
\draw[->][line width=1pt][rotate=315] (3.2,-0.25) arc(-100:-225:1.31);
\draw[rotate=  0] (4.3,1.3) node{\rotatebox{0}{$x_{1}^{(1)}$}};
\draw[rotate=  0] (3.6,1.1) node{\rotatebox{0}{$x_{2}^{(1)}$}};
\draw[rotate=  0] (1.9,0.6) node{\rotatebox{0}{$x_{n_1}^{(1)}$}};
\draw[rotate= 45] (4.3,1.3) node{\rotatebox{0}{$x_{1}^{(2)}$}};
\draw[rotate= 45] (3.6,1.1) node{\rotatebox{0}{$x_{2}^{(2)}$}};
\draw[rotate= 45] (1.9,0.6) node{\rotatebox{0}{$x_{n_2}^{(2)}$}};
\draw[rotate= 90] (4.3,1.3) node{\rotatebox{ 0}{$x_{1}^{(3)}$}};
\draw[rotate= 90] (3.6,1.1) node{\rotatebox{ 0}{$x_{2}^{(3)}$}};
\draw[rotate= 90] (1.9,0.6) node{\rotatebox{0}{$x_{n_3}^{(3)}$}};
\draw[rotate=135] (4.3,1.3) node{\rotatebox{0}{$x_{1}^{(4)}$}};
\draw[rotate=135] (3.6,1.1) node{\rotatebox{0}{$x_{2}^{(4)}$}};
\draw[rotate=135] (1.9,0.6) node{\rotatebox{0}{$x_{n_4}^{(4)}$}};
\draw[rotate=180] (4.3,1.3) node{\rotatebox{0}{$x_{1}^{(5)}$}};
\draw[rotate=180] (3.6,1.1) node{\rotatebox{0}{$x_{2}^{(5)}$}};
\draw[rotate=180] (1.9,0.6) node{\rotatebox{0}{$x_{n_5}^{(5)}$}};
\draw[rotate=270] (4.3,1.3) node{\rotatebox{0}{$x_{1}^{({t-1})}$}};
\draw[rotate=270] (3.6,1.1) node{\rotatebox{0}{$x_{2}^{({t-1})}$}};
\draw[rotate=270] (1.9,0.6) node{\rotatebox{0}{$x_{n_{t-1}}^{({t-1})}$}};

\draw[rotate=315] (4.3,1.3) node{\rotatebox{0}{$x_{1}^{({t})}$}};
\draw[rotate=315] (3.6,1.1) node{\rotatebox{0}{$x_{2}^{({t})}$}};
\draw[rotate=315] (1.9,0.6) node{\rotatebox{0}{$x_{n_{t}}^{({t})}$}};



\draw[rotate=  0] (2.7,0.8) node{\rotatebox{ 22.5}{$\cdots$}};
\draw[rotate= 45] (2.7,0.8) node{\rotatebox{ 67.5}{$\cdots$}};
\draw[rotate= 90] (2.7,0.8) node{\rotatebox{112.5}{$\cdots$}};
\draw[rotate=135] (2.7,0.8) node{\rotatebox{157.5}{$\cdots$}};
\draw[rotate=180] (2.7,0.8) node{\rotatebox{202.5}{$\cdots$}};
\draw[rotate=225] (2.7,0.8) node{\rotatebox{247.5}{$\cdots$}};
\draw[rotate=270] (2.7,0.8) node{\rotatebox{292.5}{$\cdots$}};
\draw[rotate=315] (2.7,0.8) node{\rotatebox{337.5}{$\cdots$}};
\end{tikzpicture}
\caption{The path algebra in Example \ref{exp:non semi-simp SPI} (2)}
\label{fig:path alg}
\end{figure}
First, it is clear that $\Lambda$ is a hereditary algebra.
Then, it is not representation finite since
the underlying graph of $\Q$ is not a Dynkin type
(cf. \cite[Chap VII, Section 2]{assem2006elements}).
On the other hand, it is easy to see that the
simple modules $S(2i)$ ($i\in\NN$, $2\leqslant 2i \leqslant t$) is indecomposable projective
and $S(2j+1)$ ($j\in\NN$, $1\leqslant 2j+1 \leqslant t$) is indecomposable injective.
Therefore, $\ospec(\Lambda)=\{1\}$ by Theorem \ref{thm:non semi-simp SPI} (2) as required.
\end{example}

\section{Radical Layer lengths}

Layer length, introduced by Huard, Lanzilotta, and Hern{\'a}ndez in \cite{huard2013layer}, is closely related to finitistic dimension.

\subsection{Torsion pairs and radical Layer lengths}
\begin{definition} \rm
A {\defines torsion pair} ({\defines torsion theory}) for $\mod\itLamb$ is a pair $(\T,\F)$
of classes of modules in $\mod\itLamb$ satisfying the following conditions:

(1) $\Hom_{\itLamb}(M,N)=0$ for each $M\in\T$ and $N\in\F$;

(2) for each $X\in\mod\itLamb$, if $\Hom_{\itLamb}(X,-)|_{\F}=0$ then $X\in\T$;

(3) for each $Y\in\mod\itLamb$, if $\Hom_{\itLamb}(-,Y)|_{\T}=0$ then $Y\in\F$.
\end{definition}

For a given class $\mathcal{X}$ in $\mod\itLamb$ and a module $M\in\mod\itLamb$,
the trace of $\mathcal{X}$ is the submodule $\text{Trace}_{\mathcal{X}}(M)$ of
$M$ which is the sum of all images $\Im(f)$ of the morphisms $f\in\Hom_{\itLamb}(X,M)$ with $X\in \mathcal{X}.$
Let $(\T,\F)$ be a torsion theory for $\mod\itLamb$. Let us recall that $t:=\text{Trace}_{\T}$
is the so called torsion radical attached to $(\T,\F)$. In fact, $t(M)$ is the largest submodule of $M$
lying in $\T$. And we write
$q_{t}:=1_{\mod\itLamb}/t$, i.e., $q_{t}(M):=M/t(M)$ for each $M\in\mod\itLamb$.
Thus, for each $M\in\mod\itLamb$, we have the following short exact sequence
\[0 \longrightarrow t(M)\longrightarrow  M\longrightarrow  q_{t}(M)\longrightarrow 0\]
where $t(M)\in \T$ and $q_{t}(M)\in\F$.

\begin{definition}[{\cite{huard2013layer}}] \label{radical-length} \rm
Given a torsion pair $(\T,\F)$ with the torsion radical $t$.
The {\defines $t$-radical layer length} is a function
$\LL^{t}: \mod\itLamb \to \NN\cup\{\infty\}$ via
\[\LL^{t}(M)=\inf\{i\geqslant 0 \mid t\circled F_{t}^{i}(M)=0, M\in \mod\itLamb\}\]
where $F_{t}=\rad\circ t$.
\end{definition}

Let $\simp$ be a subset of some simple $\itLamb$-modules and $\simp'$ the set of all the others simple $\itLamb$-modules.
A {\defines $\simp$-filtration} of a $\itLamb$-module $M$ is a filtration $0\subseteq M_{0}\subseteq M_{1} \cdots\subseteq M_{m-1}\subseteq M_m = M$ such that $M_{i}/M_{i-1}\in\simp$ holds for all $1\leqslant i\leqslant m$.
We use $\F_{\simp}$ to represent the subcategory of $\mod\itLamb$ generated by all $\itLamb$-modules $M$ with some $\simp$-filtration.
We have the following statements:
\begin{itemize}
  \item it is easy to check that $\F_{\simp}$ is closed under extensions, submodules and quotients modules;
  \item and there is a subcategory $\T_{\simp}$ of $\mod\itLamb$ such that $(\T_{\simp},\F_{\simp})$ is a torsion pair
    (the corresponding torsion radical is denoted by $t_{\simp}$),
\end{itemize}
and have the following results.

\begin{proposition}[\!\!{\cite[Proposition 3.2]{zheng2020upper}}] \label{prop:rad-layer}
If $\simp=\varnothing$, then $\LL^{t_{\simp}}(M)=\ell\ell(M)$ for each $M\in\mod\itLamb$.
\end{proposition}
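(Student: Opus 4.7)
The plan is to unpack the definitions and show that when $\simp = \varnothing$, the torsion radical $t_{\simp}$ reduces to the identity and hence the iteration of $F_{t_{\simp}} = \rad\circled t_{\simp}$ reduces to iteration of $\rad$, which gives the Loewy length.

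First I would analyze the subcategory $\F_{\simp}$. A $\simp$-filtration of $M$ by definition has all its consecutive quotients in $\simp$; when $\simp = \varnothing$, the only module admitting such a filtration is the zero module. Hence $\F_{\varnothing} = \{0\}$. Then using that $(\T_{\simp}, \F_{\simp})$ is a torsion pair, the defining condition that $X \in \T_{\simp}$ whenever $\Hom_{\itLamb}(X, -)|_{\F_{\simp}} = 0$ is vacuously satisfied for every $X$ in $\mod\itLamb$, since the only object in $\F_{\varnothing}$ is zero. Thus $\T_{\varnothing} = \mod\itLamb$.

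Next I would invoke the description of the torsion radical: $t_{\simp}(M)$ is the largest submodule of $M$ belonging to $\T_{\simp}$. Since every submodule of $M$ lies in $\T_{\varnothing}$, we conclude $t_{\varnothing}(M) = M$ for all $M$. Consequently $F_{t_{\varnothing}}(M) = \rad(t_{\varnothing}(M)) = \rad(M)$, and by a straightforward induction $F_{t_{\varnothing}}^{i}(M) = \rad^{i}(M)$ for every $i \geqslant 0$.

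Finally, applying $t_{\varnothing}$ again (which is the identity), we get $t_{\varnothing} \circled F_{t_{\varnothing}}^{i}(M) = \rad^{i}(M)$. Plugging into Definition \ref{radical-length} yields
\[ \LL^{t_{\varnothing}}(M) = \inf\{i \geqslant 0 \mid \rad^{i}(M) = 0\} = \ell\ell(M), \]
which is the claimed equality. There is no real obstacle here; the content of the proof is simply identifying that the empty choice of $\simp$ collapses the torsion-theoretic machinery of $\LL^{t_{\simp}}$ to the classical radical filtration.
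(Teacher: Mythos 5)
Your proof is correct, and the paper itself gives no proof here (it simply cites \cite[Proposition 3.2]{zheng2020upper}), so your argument fills in exactly what the citation defers to. The route you take — showing $\F_{\varnothing}=\{0\}$, deducing $\T_{\varnothing}=\mod\itLamb$, hence $t_{\varnothing}=\mathrm{id}$, hence $F_{t_{\varnothing}}^{i}=\rad^{i}$ — is the natural unpacking of Definition \ref{radical-length}. One small remark: the identification $\T_{\varnothing}=\mod\itLamb$ can be read off more directly from Lemma \ref{lemm:basic-prop}(1), since $\simp=\varnothing$ gives $\simp'$ equal to the set of all simples, so $\add(\simp')$ is all semisimple modules and the condition $\top(M)\in\add(\simp')$ is automatic; this bypasses having to parse the $\simp$-filtration definition (where, as literally written in the paper, one must take the initial term of the filtration to be $0$ for your claim $\F_{\varnothing}=\{0\}$ to hold, and you should say so). Either way the conclusion $t_{\varnothing}\circled F_{t_{\varnothing}}^{i}(M)=\rad^{i}M$ and $\LL^{t_{\varnothing}}(M)=\inf\{i\geqslant 0\mid \rad^{i}M=0\}=\LL(M)$ follows.
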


\begin{lemma}\label{lemm:basic-prop}
Let the pair $(\T_{\simp}, \F_{\simp})$ be a torsion pair as above.
\begin{itemize}
  \item[\rm(1)] {\rm(\!\cite[Proposition 5.9]{huard2009finitistic})}
    $\T_{\simp} = \{M\in\mod\itLamb \mid \top(M)\in\add(\simp')\}$;
  \item[\rm(2)] {\rm(\!\cite[Lemma 3.3(c)]{huard2009finitistic})}
    If $M\in\T_{\simp}$ and $M\ne0$, then $\LL^{t_{\simp}}(M)=\LL^{t_{\simp}}(\rad M)+1;$
  \item[\rm(3)] {\rm(\!\cite[Lemma 3.4(a)]{huard2009finitistic})}
    $\LL^{t_{\simp}}(\bigoplus_{i=1}^{n}M_{i})=\sup_{1\leqslant i\leqslant n}\LL^{t_{\simp}}(M_{i})$,
    where $M_{i}\in\mod\itLamb$ for all $1\leqslant i \leqslant n$.
\end{itemize}
\end{lemma}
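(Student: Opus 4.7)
The plan is to verify each of the three parts by unwinding the definitions of the torsion pair $(\T_\simp,\F_\simp)$, the torsion radical $t_\simp$, and the function $\LL^{t_\simp}$ from Definition \ref{radical-length}, while invoking the closure properties of $\F_\simp$ (under submodules, quotients, and extensions) recorded just before the lemma. The crucial auxiliary observation is that the simple objects contained in $\F_\simp$ are exactly the elements of $\simp$, since $\F_\simp$ is closed under quotients and is generated by $\simp$-filtrations.

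For part (1), I would argue that any nonzero morphism $f\colon M\to N$ with $N\in\F_\simp$ has image in $\F_\simp$ (closure under submodules), and the semi-simple module $\top(\Im f)$ is a quotient both of $\top M$ and of an object of $\F_\simp$; hence it lies in $\add\simp$. This shows that $\Hom_\itLamb(M,-)|_{\F_\simp}=0$ holds precisely when $\top M$ has no simple summand in $\simp$, i.e.\ $\top M\in\add(\simp')$, which is the characterization of $\T_\simp$ required.

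For part (2), the plan is to note that $M\in\T_\simp$ forces $t_\simp(M)=M$, so $F_{t_\simp}(M)=\rad M$, and then inductively $F_{t_\simp}^{i}(M)=F_{t_\simp}^{i-1}(\rad M)$ for every $i\geqslant 1$. The hypothesis $M\neq 0$ rules out $i=0$ in the infimum of Definition \ref{radical-length}, whence
\[\LL^{t_\simp}(M)=1+\inf\{j\geqslant 0\mid t_\simp\circled F_{t_\simp}^{j}(\rad M)=0\}=\LL^{t_\simp}(\rad M)+1.\]
For part (3), the plan is to use that both $\rad$ and $t_\simp$ preserve finite direct sums (the latter because $\T_\simp$ is a torsion class), so $F_{t_\simp}$ does as well; hence $t_\simp\circled F_{t_\simp}^{i}(\bigoplus_k M_k)=0$ iff $t_\simp\circled F_{t_\simp}^{i}(M_k)=0$ for every $k$, and taking the infimum in $i$ converts this to $\sup_k\LL^{t_\simp}(M_k)$.

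The main point of care is the bookkeeping in part (2), where one must track that the hypothesis $M\in\T_\simp$ is only needed at the outermost application of $t_\simp$ in order to start the iteration; after that, only the semigroup identity $F_{t_\simp}^{i}=F_{t_\simp}^{i-1}\circ F_{t_\simp}$ is used. Otherwise each statement is a direct consequence of the definitions together with the closure properties of $\F_\simp$, and since all three parts are already established in the cited reference \cite{huard2009finitistic}, the plan could equally well be simply to cite those sources.
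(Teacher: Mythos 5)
The paper supplies no proof of this lemma: it is stated as a list of facts borrowed verbatim, with each item carrying a citation to Huard--Lanzilotta--Mendoza. Your proposal, by contrast, proves all three items directly from the definitions, so the comparison is between a citation and a genuine argument rather than between two proofs. Your reasoning is sound throughout. In part (1) you correctly identify the mechanism: $\F_\simp$ is closed under subobjects and quotients, the simple objects of $\F_\simp$ are exactly those in $\simp$, and any nonzero $f\colon M\to N$ with $N\in\F_\simp$ forces a nonzero semi-simple quotient $\top(\Im f)$ of $\top M$ lying in $\add\simp$; this gives the equivalence between $\Hom_\itLamb(M,-)|_{\F_\simp}=0$ and $\top M\in\add(\simp')$, which is exactly the description of ${}^\perp\F_\simp=\T_\simp$. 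In part (2) the key points are that $M\in\T_\simp$ gives $t_\simp(M)=M$ (so $F_{t_\simp}(M)=\rad M$ and $F_{t_\simp}^{i}(M)=F_{t_\simp}^{i-1}(\rad M)$), and that $M\neq 0$ rules out $i=0$ in the infimum since $t_\simp F_{t_\simp}^{0}(M)=M\neq 0$; the index shift then yields the formula. In part (3) the additivity of $\rad$ and of $t_\simp$ makes $F_{t_\simp}$ additive, and since the predicate $t_\simp F_{t_\simp}^{i}(-)=0$ is increasing in $i$ (once $t_\simp F_{t_\simp}^{i}(X)=0$ one has $F_{t_\simp}^{i+1}(X)=0$), the infimum over $i$ of the conjunction becomes the supremum of the individual infima. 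Your version has the merit of being self-contained, which is useful since the paper's readers would otherwise have to chase the reference to check these facts.
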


\subsection{Short exact sequences and radical layer length}
In this section, we show two main results, i.e., Theorems \ref{thm:main 1} and \ref{thm:main 2}, of our paper.

\subsubsection{The first main result and some corollaries}
\label{subsubsect:main result 1}
In this subdivision, we show the following theorem which is the first result of our paper.

\begin{theorem}\label{thm:main 1}
Let $0  \longrightarrow L\longrightarrow  M\longrightarrow N\longrightarrow 0$
be an exact sequence in $\mod\itLamb$. Then
\[\max\{\LL^{t_{\simp}}(L), \LL^{t_{\simp}}(N)\}  \leqslant \LL^{t_{\simp}}(M) \leqslant \LL^{t_{\simp}}(L)+\LL^{t_{\simp}}(N).\]
In particular, if $\LL^{t_{\simp}}(L)=0$, then $\LL^{t_{\simp}}(N)=\LL^{t_{\simp}}(M);$
if $\LL^{t_{\simp}}(N)=0$, then $\LL^{t_{\simp}}(L)=\LL^{t_{\simp}}(M)$.
\end{theorem}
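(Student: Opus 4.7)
The plan is to prove the three inequalities $\LL^{t_\simp}(L) \leqslant \LL^{t_\simp}(M)$, $\LL^{t_\simp}(N) \leqslant \LL^{t_\simp}(M)$ and $\LL^{t_\simp}(M) \leqslant \LL^{t_\simp}(L) + \LL^{t_\simp}(N)$ separately; the two ``in particular'' clauses then drop out immediately by squeezing. The ingredients I would rely on are the Artin-algebra identities $\rad(X+Y) = \rad X + \rad Y$ and $\rad(X/Y) = (\rad X + Y)/Y$, together with the fact recalled in the preliminaries that $\F_\simp$ is closed under submodules, quotients and extensions.

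For the submodule inequality, since $t_\simp(L)\in\T_\simp$ is a submodule of $M$, maximality of $t_\simp(M)$ gives $t_\simp(L)\subseteq t_\simp(M)$; applying $\rad$ and iterating yields $F_{t_\simp}^{\,i}(L)\subseteq F_{t_\simp}^{\,i}(M)$, from which the inequality is immediate. For the quotient inequality the first step is to establish $t_\simp(X/Y)=(t_\simp(X)+Y)/Y$: the right-hand side is a quotient of $t_\simp(X)\in\T_\simp$, and $X/(t_\simp(X)+Y)$ is a quotient of $X/t_\simp(X)\in\F_\simp$ and so lies in $\F_\simp$ by closure under quotients. Combined with the radical formula this gives $F_{t_\simp}(X/Y) \cong F_{t_\simp}(X)/(F_{t_\simp}(X)\cap Y)$, which iterates to $F_{t_\simp}^{\,i}(X/Y) \cong F_{t_\simp}^{\,i}(X)/(F_{t_\simp}^{\,i}(X)\cap Y)$; closure of $\F_\simp$ under quotients then transfers the vanishing of $t_\simp$ from $F_{t_\simp}^{\,k}(M)$ down to $F_{t_\simp}^{\,k}(N)$.

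For the upper bound I would first prove the auxiliary statement that $N\in\F_\simp$ forces $\LL^{t_\simp}(M)=\LL^{t_\simp}(L)$: indeed $\pi(t_\simp(M))\subseteq t_\simp(N)=0$ forces $t_\simp(M)\subseteq L$, and then maximality yields $t_\simp(M)=t_\simp(L)$, so the $F_{t_\simp}$-orbits of $M$ and $L$ coincide from depth one onwards. The general bound then follows by induction on $k:=\LL^{t_\simp}(M)$. For $k\geqslant 1$ one applies the inductive hypothesis to
\[0 \to F_{t_\simp}(M)\cap L \to F_{t_\simp}(M) \to \pi(F_{t_\simp}(M)) \to 0,\]
whose middle term has $\LL^{t_\simp}$ equal to $k-1$; the kernel is bounded by $\LL^{t_\simp}(L)$ via submodule monotonicity, and via $\pi(F_{t_\simp}(M))\subseteq F_{t_\simp}(N)$ the image is bounded by $\LL^{t_\simp}(F_{t_\simp}(N))=\LL^{t_\simp}(N)-1$. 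Adding one for the jump from $F_{t_\simp}(M)$ back up to $M$ gives $\LL^{t_\simp}(M)\leqslant \LL^{t_\simp}(L)+\LL^{t_\simp}(N)$, with the edge case $\LL^{t_\simp}(N)=0$ covered by the auxiliary statement.

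The main obstacle is that $\T_\simp$ is in general \emph{not} closed under submodules, so the clean identity $t_\simp(L)=L\cap t_\simp(M)$ valid for hereditary torsion pairs is unavailable. All structural information must therefore be routed through $\F_\simp$, whose closure under quotients is the crucial property underwriting the formula $t_\simp(X/Y)=(t_\simp(X)+Y)/Y$; this same closure is what makes the iterated quotient-monotonicity work and what forces the $\LL^{t_\simp}(N)=0$ case to be split off as a stand-alone lemma, since the pullback-style induction otherwise degenerates precisely there.
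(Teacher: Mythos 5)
Your argument is correct, and for the key upper bound it takes a genuinely different route from the paper. The two monotonicity inequalities $\LL^{t_\simp}(L)\leqslant\LL^{t_\simp}(M)$ and $\LL^{t_\simp}(N)\leqslant\LL^{t_\simp}(M)$ are handled in the paper by invoking \cite[Lemma~3.4(b)(c)]{huard2013layer} after checking that $F_{t_\simp}$ preserves monomorphisms and epimorphisms, whereas you reconstruct them directly (maximality of $t_\simp$ for the submodule case; the identity $t_\simp(X/Y)=(t_\simp(X)+Y)/Y$, which you correctly justify using that $\F_\simp$ is closed under quotients, for the quotient case); this is the same content, just self-contained.

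For the upper bound $\LL^{t_\simp}(M)\leqslant\LL^{t_\simp}(L)+\LL^{t_\simp}(N)$ the strategies diverge. The paper's argument is non-inductive: it exploits the ideal-theoretic reformulation $t_\simp\bigl(F_{t_\simp}^{i}(X)\bigr)=X\cdot t_\simp\bigl(F_{t_\simp}^{i}(\itLamb_{\itLamb})\bigr)$, where $t_\simp\bigl(F_{t_\simp}^{i}(\itLamb_{\itLamb})\bigr)$ is a two-sided ideal, to show in one stroke that $t_\simp\bigl(F_{t_\simp}^{\LL^{t_\simp}(N)}(M)\bigr)\subseteq L$ and then that applying $t_\simp F_{t_\simp}^{\LL^{t_\simp}(L)}$ kills it. Your argument instead runs an induction on $k=\LL^{t_\simp}(M)$, applying the inductive hypothesis to the induced exact sequence $0\to F_{t_\simp}(M)\cap L\to F_{t_\simp}(M)\to\pi(F_{t_\simp}(M))\to 0$ and using $\pi(F_{t_\simp}(M))\subseteq F_{t_\simp}(N)$ together with $\LL^{t_\simp}(F_{t_\simp}(M))=k-1$ to close the estimate. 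The cost is the split-off edge case $\LL^{t_\simp}(N)=0$, where the ``$-1$'' on the $N$-side is unavailable; your auxiliary lemma ($N\in\F_\simp\Rightarrow t_\simp(M)=t_\simp(L)$, so the $F_{t_\simp}$-iterates of $M$ and $L$ agree) patches this cleanly. What the paper's route buys is a uniform, case-free computation at the price of the ideal machinery of its Lemmas~\ref{lemm:t-rad 1} and~\ref{lemm:t-rad 2}; what yours buys is an argument at the level of modules that needs only $\rad X=X\cdot\rad\itLamb$ and the closure properties of $\F_\simp$, at the price of an induction with a base-case special pleading. Both are valid proofs of the theorem.
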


We need some lemmas for proving it.

\begin{lemma} \label{lemm:t-rad 1}
Let $X$ be a $\itLamb$-module.
\begin{itemize}
  \item[\rm(1)]{\rm(\!\cite[Proposition 5.9 (c)]{huard2013layer})}
    $t_{\simp}(\itLamb_{\itLamb})$ is a two side ideal of $\itLamb$ and $t_{\simp}(X) = X\cdot t_{\simp}(\itLamb_{\itLamb})$.
  \item[\rm(2)]{\rm(\!\cite[Propostion 3.5]{auslander1997representation})}
    $\rad X = X\cdot \rad(\itLamb_{\itLamb})$.
  \item[\rm(3)] $\rad(t_{\simp}(\itLamb_{\itLamb})) = t_{\simp}(\itLamb_{\itLamb})\cdot \rad(\itLamb_{\itLamb})$.
  \item[\rm(4)] $t_{\simp}(\rad(t_{\simp}(\itLamb_{\itLamb}))) = t_{\simp}(\itLamb_{\itLamb}) \cdot \rad(\itLamb_{\itLamb})\cdot t_{\simp}(\itLamb_{\itLamb})$.
  \item[\rm(5)] $t_{\simp}(F_{t_{\simp}}^{i}(\itLamb_{\itLamb}))$ is an ideal of $\itLamb$ for each $i \geqslant 0 $.
\end{itemize}
\end{lemma}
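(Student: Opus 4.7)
The plan is to treat parts (1) and (2) as given input (they are cited to \cite{huard2013layer} and \cite{auslander1997representation} in the statement) and to derive (3), (4), (5) as uniform consequences of the slogan ``applying $\rad$ or $t_{\simp}$ to a right $\itLamb$-module $X$ amounts to right-multiplying by a fixed two-sided ideal of $\itLamb$''. To keep the bookkeeping light I would introduce abbreviations $a := t_{\simp}(\itLamb_{\itLamb})$ and $r := \rad(\itLamb_{\itLamb})$; the ideal $a$ is two-sided by (1), the ideal $r$ is two-sided by standard Artin-algebra theory, and hence any finite product of copies of $a$ and $r$ is again a two-sided ideal of $\itLamb$.

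For (3), apply (2) to the right $\itLamb$-module $X = a$; this gives $\rad(a) = a \cdot r$, which is precisely the asserted identity. For (4), use (3) to rewrite $\rad(a) = ar$, and then apply (1) with $X = ar$ to obtain $t_{\simp}(ar) = (ar)\cdot a = ara$, again matching the claim. Both (3) and (4) are one-line corollaries once the slogan is in place.

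For (5), the cleanest route is to derive a closed form for $F_{t_{\simp}}^{i}(\itLamb)$. Combining (1) and (2), for any right $\itLamb$-module $X$ one has
\[F_{t_{\simp}}(X)=\rad(t_{\simp}(X))=\rad(X\cdot a)=X\cdot a\cdot r,\]
so a straightforward induction on $i$ produces $F_{t_{\simp}}^{i}(\itLamb)=(ar)^{i}$ for every $i\geqslant 0$. Then (1) yields $t_{\simp}(F_{t_{\simp}}^{i}(\itLamb))=(ar)^{i}\cdot a$, which is a finite product of the two-sided ideals $a$ and $r$ and is therefore itself a two-sided ideal of $\itLamb$, as required.

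The only real care needed --- and the closest thing to an obstacle --- is at the single step $\rad(X\cdot a)=X\cdot a\cdot r$: one must observe that (2) is being applied to the submodule $X\cdot a$ of $X$ (which, by (1), agrees with $t_{\simp}(X)$) and not to $X$ itself. After this one-line reduction, parts (3)--(5) reduce to purely formal manipulations inside the lattice of two-sided ideals of $\itLamb$, with no further representation-theoretic content required.
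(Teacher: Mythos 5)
Your proposal is correct and follows essentially the same route as the paper: take (1) and (2) as cited inputs, obtain (3) by applying (2) to $X = t_{\simp}(\itLamb_{\itLamb})$, obtain (4) from (1) and (3), and obtain (5) from the preceding parts. Your explicit closed form $F_{t_{\simp}}^{i}(\itLamb_{\itLamb}) = (t_{\simp}(\itLamb_{\itLamb})\cdot\rad(\itLamb_{\itLamb}))^{i}$ merely makes precise the induction that the paper leaves implicit in its one-line justification of (5).
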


\begin{proof}
The statement (3) can be obtained by (2) in the case of $X = t_{\simp}(\itLamb_{\itLamb})$.
The statement (4) is obtained by (1) and (3), and the statement (5) is obtained by (1)--(4).
\end{proof}

\begin{lemma}\label{lemm:t-rad 2}
For any module $X\in\mod\itLamb$, we have
$t_{\simp} (F_{t_{\simp}}^{i}(X)) =
X\cdot t_{\simp}(F_{t_{\simp}}^{i}(\Lambda_{\Lambda}))$ for each $i \geqslant 0 $.
\end{lemma}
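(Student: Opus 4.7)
The plan is to prove the identity by induction on $i$, using Lemma \ref{lemm:t-rad 1} parts (1) and (2) as the two workhorses. The key observation is that both operators $t_{\simp}$ and $\rad$ can, on any right $\itLamb$-module, be realized as right multiplication by a two-sided ideal of $\itLamb$: namely $t_{\simp}(\itLamb_{\itLamb})$ and $\rad(\itLamb_{\itLamb})$ respectively. Since $F_{t_{\simp}} = \rad\circled t_{\simp}$ is a composition of two such operators, every iterate $F_{t_{\simp}}^{i}$ also acts as right multiplication by an ideal; once this is made precise, the statement to be proved is automatic.

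More concretely, I would strengthen the target to a joint induction hypothesis: for every $i\geqslant 0$ and every $X\in\mod\itLamb$,
\[ F_{t_{\simp}}^{i}(X) = X\cdot F_{t_{\simp}}^{i}(\itLamb_{\itLamb}) \qquad\text{and}\qquad t_{\simp}(F_{t_{\simp}}^{i}(X)) = X\cdot t_{\simp}(F_{t_{\simp}}^{i}(\itLamb_{\itLamb})). \]
The base case $i=0$ is the original Lemma \ref{lemm:t-rad 1}(1). For the inductive step, I would write
\[ F_{t_{\simp}}^{i+1}(X) = \rad\bigl(t_{\simp}(F_{t_{\simp}}^{i}(X))\bigr) = \rad\bigl(X\cdot t_{\simp}(F_{t_{\simp}}^{i}(\itLamb_{\itLamb}))\bigr) \]
by the inductive hypothesis, then apply Lemma \ref{lemm:t-rad 1}(2) to the submodule $X\cdot t_{\simp}(F_{t_{\simp}}^{i}(\itLamb_{\itLamb}))$ to pull the $\rad$ out as a right multiplication by $\rad(\itLamb_{\itLamb})$. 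A second use of Lemma \ref{lemm:t-rad 1}(2), this time applied to the module $t_{\simp}(F_{t_{\simp}}^{i}(\itLamb_{\itLamb}))$ itself, re-bundles the trailing $\rad(\itLamb_{\itLamb})$ inside to yield $X\cdot F_{t_{\simp}}^{i+1}(\itLamb_{\itLamb})$, which is the first half of the hypothesis at level $i+1$. Applying Lemma \ref{lemm:t-rad 1}(1) once more to the module $F_{t_{\simp}}^{i+1}(X)$ gives the second half.

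The only point that demands care is the legitimacy of applying Lemma \ref{lemm:t-rad 1}(1)--(2) to the intermediate objects $t_{\simp}(F_{t_{\simp}}^{i}(\itLamb_{\itLamb}))$ and $F_{t_{\simp}}^{i+1}(\itLamb_{\itLamb})$, since those results are stated for right $\itLamb$-modules but will here be used on submodules of $\itLamb_{\itLamb}$ that need to be two-sided ideals. This is exactly what Lemma \ref{lemm:t-rad 1}(5) guarantees, so the step is legal and no extra work is needed. I expect the main (mild) obstacle to be purely notational: keeping track of associativity of the products $X\cdot I_{1}\cdot I_{2}\cdot\cdots$ and making sure each factor $I_{j}$ is identified as an ideal at the moment Lemma \ref{lemm:t-rad 1} is invoked.
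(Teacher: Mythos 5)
Your proof is correct, and it takes a genuinely different route from the paper's. The paper keeps only the single statement as the induction hypothesis and peels $F_{t_{\simp}}$ off from the \emph{inside}, writing $F_{t_{\simp}}^{n+1}(X)=F_{t_{\simp}}^{n}(F_{t_{\simp}}(X))$ and then applying the hypothesis to the module $F_{t_{\simp}}(X)$; the rest of the paper's computation then re-expands $F_{t_{\simp}}(X)=\rad(t_{\simp}(X))$ and chases a chain of right multiplications by ideals, invoking Lemma~\ref{lemm:t-rad 1}(1)--(4). You instead strengthen the induction to the pair of identities $F_{t_{\simp}}^{i}(X)=X\cdot F_{t_{\simp}}^{i}(\itLamb_{\itLamb})$ and $t_{\simp}(F_{t_{\simp}}^{i}(X))=X\cdot t_{\simp}(F_{t_{\simp}}^{i}(\itLamb_{\itLamb}))$, and peel $F_{t_{\simp}}$ off from the \emph{outside}, writing $F_{t_{\simp}}^{i+1}(X)=\rad\bigl(t_{\simp}(F_{t_{\simp}}^{i}(X))\bigr)$ and applying the hypothesis to $X$ itself. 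What your version buys is a cleaner bookkeeping of the right-multiplications: each use of Lemma~\ref{lemm:t-rad 1}(1) or (2) appears only in the form $t_{\simp}(M)=M\cdot t_{\simp}(\itLamb)$ or $\rad M=M\cdot\rad(\itLamb)$ with the ideal on the right of the module, so the regroupings are literal associativity and you never need to move $\rad(\itLamb_{\itLamb})$ past another ideal. In the paper's computation, by contrast, the passage from $t_{\simp}(X)\cdot\rad(\itLamb_{\itLamb})\cdot t_{\simp}(F^{n}_{t_{\simp}}(\itLamb_{\itLamb}))$ to $t_{\simp}(X)\cdot\rad\bigl(t_{\simp}(F^{n}_{t_{\simp}}(\itLamb_{\itLamb}))\bigr)$ tacitly commutes two ideal factors, which Lemma~\ref{lemm:t-rad 1}(2) alone does not justify; your rearrangement sidesteps that entirely. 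One small point to tighten in your write-up: in the final step, after obtaining $t_{\simp}(F_{t_{\simp}}^{i+1}(X))=X\cdot F_{t_{\simp}}^{i+1}(\itLamb_{\itLamb})\cdot t_{\simp}(\itLamb_{\itLamb})$ you still need one more explicit invocation of Lemma~\ref{lemm:t-rad 1}(1), applied to $F_{t_{\simp}}^{i+1}(\itLamb_{\itLamb})$, to identify $F_{t_{\simp}}^{i+1}(\itLamb_{\itLamb})\cdot t_{\simp}(\itLamb_{\itLamb})$ with $t_{\simp}(F_{t_{\simp}}^{i+1}(\itLamb_{\itLamb}))$; you gesture at it but do not spell it out.
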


\begin{proof}
Lemma \ref{lemm:t-rad 1} (1) admits this statement under the case for $i=0$. Assume that if $i=n$ then $t_{\simp}(F_{t_{\simp}}^{n}(X)) = X\cdot t_{\simp}(F_{t_{\simp}}^{n}(\itLamb_{\itLamb}))$. For the case $i=n+1$, we have
\begin{align*}
    t_{\simp}(F_{t_{\simp}}^{n+1}(X))
& = t_{\simp}(F_{t_{\simp}}^{n}(F_{t_{\simp}}(X))) & \\
& = F_{t_{\simp}}(X)\cdot t_{\simp}(F_{t_{\simp}}^{n}(\itLamb_{\itLamb}))
 \tag{\text{using assumption}}\\
& = \rad(t_{\simp}(X))\cdot t_{\simp}(F_{t_{\simp}}^{n}(\itLamb_{\itLamb}))
 \tag{\text{$F_{t_{\simp}}=\rad \circled t_{\simp}$}} \\
& = t_{\simp}(X)\cdot \rad(\itLamb_{\itLamb})\cdot t_{\simp}(F_{t_{\simp}}^{n}(\itLamb_{\itLamb}))
 \tag{\text{Lemma \ref{lemm:t-rad 1} (2)}}  \\
& = t_{\simp}(X)\cdot \rad(t_{\simp}(F_{t_{\simp}}^{n}(\itLamb_{\itLamb})))
 \tag{\text{Lemma \ref{lemm:t-rad 1} (2)}}  \\
& = t_{\simp}(X)\cdot F_{t_{\simp}}^{n+1}(\itLamb_{\itLamb})
 \tag{\text{$F_{t_{\simp}} = \rad\circled t_{\simp}$}} \\
& = X\cdot  t_{\simp}(\itLamb_{\itLamb})\cdot F_{t_{\simp}}^{n+1}(\itLamb_{\itLamb})
 \tag{\text{Lemma \ref{lemm:t-rad 1} (1)}}  \\
& = X\cdot  t_{\simp }(F_{t_{\simp}}^{n+1}(\itLamb_{\itLamb})),
  \tag{\text{Lemma \ref{lemm:t-rad 1} (3) (4)}}
\end{align*}
which yields that this lemma holds by induction.
\end{proof}

\begin{lemma}\label{lemm:t-rad 3} \
\begin{itemize}
  \item[\rm(1)] The functor $t_{\simp}$ and $\rad $ preserve monomorphism and epimorphism.
  \item[\rm(2)] For each $i \geqslant 0$, $F_{t_{\simp}}^{i}$ and $t_{\simp}\circled F_{t_{\simp}}^{i}$ preserve monomorphism and epimorphism.
\end{itemize}
\end{lemma}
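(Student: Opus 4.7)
The plan is to derive (1) directly from the explicit descriptions in Lemma \ref{lemm:t-rad 1}, namely $\rad X = X\cdot \rad(\itLamb_\itLamb)$ and $t_{\simp}(X) = X\cdot t_{\simp}(\itLamb_\itLamb)$. Both $\rad(\itLamb_\itLamb)$ and $t_{\simp}(\itLamb_\itLamb)$ are two-sided ideals of $\itLamb$, so for any homomorphism $f: X\to Y$, the induced maps $\rad(f)$ and $t_{\simp}(f)$ are simply the restrictions of $f$ to the submodules $X\cdot \rad(\itLamb_\itLamb)$ and $X\cdot t_{\simp}(\itLamb_\itLamb)$, respectively, with codomains in $Y\cdot \rad(\itLamb_\itLamb)$ and $Y\cdot t_{\simp}(\itLamb_\itLamb)$.

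For a monomorphism $f: X\hookrightarrow Y$, I would simply observe that a restriction of an injective map is still injective, so $\rad(f)$ and $t_{\simp}(f)$ are monomorphisms. For an epimorphism $f: X\twoheadrightarrow Y$, I would pick an arbitrary element $z\in\rad Y$, write it as a finite sum $\sum_i y_i r_i$ with $y_i\in Y$ and $r_i\in\rad(\itLamb_\itLamb)$, choose preimages $x_i\in X$ with $f(x_i)=y_i$, and note that $\sum_i x_i r_i$ lies in $X\cdot \rad(\itLamb_\itLamb)=\rad X$ and maps to $z$ under $f$. The identical argument, with $t_{\simp}(\itLamb_\itLamb)$ in place of $\rad(\itLamb_\itLamb)$, handles $t_{\simp}$.

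For (2), since $F_{t_{\simp}} = \rad\circled t_{\simp}$ is a composition of two functors each preserving monomorphisms and epimorphisms by (1), $F_{t_{\simp}}$ inherits this property. An easy induction on $i$ then shows that $F_{t_{\simp}}^{i}$ preserves monomorphisms and epimorphisms, and composing once more with $t_{\simp}$ gives the result for $t_{\simp}\circled F_{t_{\simp}}^{i}$.

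I do not anticipate a genuine obstacle here; the entire lemma is a formal consequence of the ``submodule-as-right-ideal-multiplication'' description collected in Lemma \ref{lemm:t-rad 1}. The only point that deserves care is that $\rad$ and $t_{\simp}$ are being treated as functors (not just as subfunctors of the identity), so one must check that the restriction of $f$ indeed lands inside $\rad Y$ (resp. $t_{\simp}(Y)$); this is immediate from the right-ideal formulas since $f(X\cdot I)\subseteq Y\cdot I$ for any right ideal $I$ of $\itLamb$.
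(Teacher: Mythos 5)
Your proof is correct, but it takes a genuinely different route from the paper's. The paper disposes of part (1) entirely by citation: preservation of monomorphisms and epimorphisms by $t_{\simp}$ is invoked from an earlier paper of the first author, preservation of monomorphisms by $\rad$ from Huard--Lanzilotta--Hern\'andez, and preservation of epimorphisms by $\rad$ from the Assem--Simson--Skowro\'nski textbook; part (2) is then declared ``a direct corollary of (1),'' exactly as you argue. You instead derive everything from the ideal-multiplication formulas $\rad X = X\cdot\rad(\itLamb_{\itLamb})$ and $t_{\simp}(X) = X\cdot t_{\simp}(\itLamb_{\itLamb})$ already recorded in Lemma~\ref{lemm:t-rad 1}, treating $\rad$ and $t_{\simp}$ uniformly as subfunctors of the identity given by right-multiplication by a two-sided ideal. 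This buys a single elementary, self-contained argument covering both functors at once, at the cost of a few lines of element-chasing that the paper replaces with references. The one point worth keeping explicit (which you do flag at the end) is that $\itLamb$-linearity of $f$ is precisely what gives $f(X\cdot I)\subseteq Y\cdot I$, so that the restriction of $f$ really is the value of the subfunctor on $f$; with that observed, the monomorphism case is immediate and the epimorphism case is the preimage-lifting computation you describe.
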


\begin{proof}
(1) The functor $t_{\simp}$ preserving monomorphism and epimorphism
is proven in \cite[Lemma 3.3]{zheng2020upper}.
By \cite[Lemma 3.6(a)]{huard2013layer}, $\rad$ preserves monomorphism,
and by \cite[Chapter V, Lemma 1.1]{assem2006elements}, $\rad$ preserves epimorphism.

(2) This statement is a direct corollary of (1).
\end{proof}

\begin{lemma}\label{lemm:t-rad 4}
Let $X$ be a $\itLamb$-module.
Then we have $t_{\simp}(F_{t_{\simp}}^{\LL^{t_{\simp}}(X)}(X)) = 0$.
\end{lemma}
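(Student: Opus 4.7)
The plan is to obtain this statement essentially by unravelling Definition \ref{radical-length}, with one preliminary step showing the infimum is actually attained. By Definition \ref{radical-length},
\[ \LL^{t_{\simp}}(X) = \inf S_X, \quad \text{where } S_X := \{i \geqslant 0 \mid t_{\simp}(F_{t_{\simp}}^{i}(X)) = 0\} \subseteq \NN, \]
so the conclusion of the lemma is precisely the assertion that $\LL^{t_{\simp}}(X)$ belongs to $S_X$, i.e.\ that the infimum is a minimum. By the well-ordering of $\NN$, this reduces to showing that $S_X \ne \varnothing$, i.e.\ that $\LL^{t_{\simp}}(X)$ is finite.

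To verify finiteness, I would first prove by induction on $i$ that
\[ F_{t_{\simp}}^{i}(X) \subseteq \rad^{i}(X) \quad \text{for every } i \geqslant 0. \]
The case $i = 0$ is trivial. For the inductive step, using $t_{\simp}(Y) \subseteq Y$ together with the fact (Lemma \ref{lemm:t-rad 3}(1)) that $\rad$ preserves monomorphisms, one gets
\[ F_{t_{\simp}}^{i+1}(X) = \rad\bigl(t_{\simp}(F_{t_{\simp}}^{i}(X))\bigr) \subseteq \rad\bigl(F_{t_{\simp}}^{i}(X)\bigr) \subseteq \rad(\rad^{i}(X)) = \rad^{i+1}(X). \]

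Now specialising to $i = \LL(X)$, which is finite since $X \in \mod\itLamb$ is finitely generated over the Artin algebra $\itLamb$, we obtain $F_{t_{\simp}}^{\LL(X)}(X) \subseteq \rad^{\LL(X)}(X) = 0$, hence $t_{\simp}(F_{t_{\simp}}^{\LL(X)}(X)) = t_{\simp}(0) = 0$. Thus $\LL(X) \in S_X$, so $S_X$ is a non-empty subset of $\NN$, and therefore $\LL^{t_{\simp}}(X) = \min S_X \in S_X$, which is exactly the desired equality.

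There is no real obstacle here; the only conceptual point is distinguishing "infimum" from "attained minimum," and the finiteness comparison $\LL^{t_{\simp}}(X) \leqslant \LL(X)$ is the argument that rules out $\inf S_X = +\infty$. As a side benefit, this comparison is itself useful and will probably be reused elsewhere (for instance when combined with Proposition \ref{prop:rad-layer} to bound $\LL^{t_{\simp}}$ by the ordinary Loewy length).
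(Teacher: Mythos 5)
Your proof is correct. The paper's own proof of Lemma \ref{lemm:t-rad 4} is a one-liner, literally just ``By Definition \ref{radical-length}, we have this observation,'' so the authors treat the statement as an immediate unpacking of the definition. You have correctly spotted the point they gloss over: the definition gives $\LL^{t_{\simp}}(X)$ as an \emph{infimum}, so the stated equality is the assertion that this infimum is attained, which requires knowing the set $S_X$ is non-empty (equivalently, that $\LL^{t_{\simp}}(X)$ is finite). Your inclusion $F_{t_{\simp}}^{i}(X) \subseteq \rad^{i}(X)$, proved by a clean induction using $t_{\simp}(Y) \subseteq Y$ and the subfunctor property of $\rad$, gives exactly this (and in fact establishes the sharper bound $\LL^{t_{\simp}}(X) \leqslant \LL(X)$, which is consistent with Proposition \ref{prop:rad-layer} where the two agree for $\simp = \varnothing$). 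So you are not taking a genuinely different route so much as supplying the finiteness argument the paper leaves implicit; it is a harmless and arguably desirable elaboration, and nothing in it would fail.
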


\begin{proof}
By Definition \ref{radical-length}, we have this observation.
\end{proof}

Now, we prove the first main result of our paper.

\begin{proof}[{$\pmb{\text{The proof of Theorem \ref{thm:main 1}}}$}]
By Lemma \ref{lemm:t-rad 3}, we know that $F_{t_{\simp}}=\rad \circ\; t_{\simp}$  preserve monomorphism and epimorphism.
Thus, by \cite[Lemma 3.4(b)(c)]{huard2013layer}, we obtain
$\LL^{t_{\simp}}(L) \leqslant \LL^{t_{\simp}}(M)$ and
$\LL^{t_{\simp}}(N) \leqslant \LL^{t_{\simp}}(M)$, i.e.,
$\max\{\LL^{t_{\simp}}(L), \LL^{t_{\simp}}(N)\} \le \LL^{t_{\simp}}(M)$.

Next, we prove the second ``$\leqslant$'', it can be obtained by proving
\begin{align}\label{formula:rad-ses}
  t_{\simp}\big(F_{t_{\simp}}^{\LL^{t_{\simp}}(L)+\LL^{t_{\simp}}(N)}(M)\big)
\subseteq t_{\simp}\big(F_{t_{\simp}}^{\LL^{t_{\simp}}(L)}(L)\big)=0,
\end{align}
see Lemmas \ref{lemm:t-rad 3} (2) and \ref{lemm:t-rad 4}, and Definition \ref{radical-length}.

By Lemma \ref{lemm:t-rad 1} (5), $t_{\simp}(F_{t_{\simp}}^{i}(\itLamb_{\itLamb}))$ is an ideal of $\itLamb$ for each $i \ge 0$,
then we have
\begin{align*}
    \big( t_{\simp}(F_{t_{\simp}}^{\LL^{t_{\simp}}(N)}(M))+L \big)/L
& = \big(M\cdot  t_{\simp}(F_{t_{\simp}}^{\LL^{t_{\simp}}(N)}(\itLamb_{\itLamb}))+L\big)/L
\tag{Lemma \ref{lemm:t-rad 2}}
\\
& = M/L\cdot t_{\simp}(F_{t_{\simp}}^{\LL^{t_{\simp}}(N)}(\itLamb_{\itLamb}))
\\
& \cong N\cdot t_{\simp}(F_{t_{\simp}}^{\LL^{t_{\simp}}(N)}(\itLamb_{\itLamb}))
\tag{$M/L\cong N$}
\\
& = t_{\simp}(F_{t_{\simp}}^{\LL^{t_{\simp}}(N)}(N))
\tag{\text{Lemma \ref{lemm:t-rad 2}}}
\\
& = 0.
\tag{\text{Lemma \ref{lemm:t-rad 4}}}
\end{align*}
Thus, $t_{\simp}(F_{t_{\simp}}^{\LL^{t_{\simp}}(N)}(M))+L=L$.
It follows $t_{\simp}(F_{t_{\simp}}^{\LL^{t_{\simp}}(N)}(M)) \subseteq L$.
Furthermore, we obtain
\begin{align*}
  t_{\simp}\big(F_{t_{\simp}}^{\LL^{t_{\simp}}(L)+\LL^{t_{\simp}}(N)}(M)\big)
& = \big(t_{\simp} F_{t_{\simp}}^{\LL^{t_{\simp}}(L)}
    \circled F_{t_{\simp}}^{\LL^{t_{\simp}}(N)}\big)(M)
\\
& = \big(t_{\simp} F_{t_{\simp}}^{\LL^{t_{\simp}}(L)}
    \circled
    t_{\simp}F_{t_{\simp}}^{\LL^{t_{\simp}}(N)}\big)(M)
 \tag{$t_{\simp}^{2}=t_{\simp}$}
\\
& \subseteq t_{\simp}
      \big(
        F_{t_{\simp}}^{\LL^{t_{\simp}}(L)}
          (
            L
          )
      \big)
\\
& = 0, \tag{Lemma \ref{lemm:t-rad 4}}
\end{align*}
that is, (\ref{formula:rad-ses}) holds.
\end{proof}

Here are some corollaries about Theorem \ref{thm:main 1}.

\begin{corollary}\label{coro:LL}
Let $M\in [T]_{n+1}$.
Then $\LL^{t_{\simp}}(M)\leqslant (n+1) \LL^{t_{\simp}}(T).$
\end{corollary}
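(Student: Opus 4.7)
The plan is to prove this by induction on $n$, mirroring exactly the strategy used for Lemma \ref{lemm:Loewy length 2}, but now invoking Theorem \ref{thm:main 1} (the analog of the Loewy-length short-exact-sequence inequality) in place of Lemma \ref{lemm:Loewy length 1}, and Lemma \ref{lemm:basic-prop}(3) (additivity of $\LL^{t_{\simp}}$ on direct sums) in place of its Loewy-length counterpart.

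For the base case $n=0$, I have $M\in [T]_{1}=\add T$, so $M$ is a direct summand of $T^{\oplus k}$ for some $k\geqslant 1$. Writing $T^{\oplus k}\cong M\oplus M'$, Lemma \ref{lemm:basic-prop}(3) gives
\[ \LL^{t_{\simp}}(T) = \LL^{t_{\simp}}(T^{\oplus k}) = \max\{\LL^{t_{\simp}}(M),\LL^{t_{\simp}}(M')\}\geqslant \LL^{t_{\simp}}(M), \]
which settles the case $n=0$.

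For the inductive step, assume the statement for all modules in $[T]_{k+1}$ and take $M\in [T]_{k+2}=[T]_{1}\multi [T]_{k+1}$. Then $M$ is a direct summand of some $X$ fitting into a short exact sequence
\[ 0\longrightarrow A\longrightarrow X\longrightarrow B\longrightarrow 0 \]
with $A\in [T]_{1}$ and $B\in [T]_{k+1}$. By the base case $\LL^{t_{\simp}}(A)\leqslant \LL^{t_{\simp}}(T)$, and by the induction hypothesis $\LL^{t_{\simp}}(B)\leqslant (k+1)\LL^{t_{\simp}}(T)$. Applying Theorem \ref{thm:main 1} yields
\[ \LL^{t_{\simp}}(X)\leqslant \LL^{t_{\simp}}(A)+\LL^{t_{\simp}}(B)\leqslant (k+2)\LL^{t_{\simp}}(T), \]
and since $M\leqslant_{\oplus} X$, the direct-summand estimate from Lemma \ref{lemm:basic-prop}(3) again gives $\LL^{t_{\simp}}(M)\leqslant \LL^{t_{\simp}}(X)\leqslant (k+2)\LL^{t_{\simp}}(T)$, completing the induction.

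I do not expect any real obstacle here: the work has already been done in Theorem \ref{thm:main 1}, and the only subtlety worth flagging is that one must genuinely pass through the direct summand (i.e.\ apply Lemma \ref{lemm:basic-prop}(3) twice, both for the base case and to descend from $X$ to $M$ in the inductive step), since the definition of $[T]_{n+1}$ involves closing under direct summands via $\add$ rather than taking the extension itself.
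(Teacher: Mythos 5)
Your proof is correct and follows essentially the same route as the paper: both iterate Theorem~\ref{thm:main 1} one extension at a time and handle direct summands via Lemma~\ref{lemm:basic-prop}(3). The only difference is presentational — you package the argument as an induction (closely paralleling Lemma~\ref{lemm:Loewy length 2}), whereas the paper unrolls the family of short exact sequences coming from $[T]_{n+1}$ into a single explicit chain of inequalities.
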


\begin{proof}
A module $M$ lying in $[T]_{n+1}$ provides a family of short exact sequences
\[ (0 \To{}{} Y_i \To{}{} Z_{i-1}\oplus Z_{i-1}' \To{}{} Z_i \To{}{} 0)_{1\leqslant i\leqslant n}, \]
where $Z_0=M$, $Y_1,\ldots, Y_n\in [T]_1$, $Z_i\in [T]_{n-i+1}$.
Then we obtain
\begin{align*}
  \LL^{t_{\simp}}(M) = \LL^{t_{\simp}}(Z_0)
& \leqslant \LL^{t_{\simp}}(Z_0 \oplus Z_0')
\\
& \leqslant \LL^{t_{\simp}}(Y_{1}) + \LL^{t_{\simp}}(Z_1)
  \tag{Lemma \ref{lemm:ext-pdim}}
\\
& \leqslant \LL^{t_{\simp}}(T)+\LL^{t_{\simp}}(Z_1\oplus Z_1')
\\
& \leqslant \LL^{t_{\simp}}(T)+\LL^{t_{\simp}}(Y_{2})+\LL^{t_{\simp}}(Z_{2})
\\
& \leqslant \LL^{t_{\simp}}(T)+\LL^{t_{\simp}}(T)+\LL^{t_{\simp}}(Z_{2})
 \tag{$Y_{2}\in\add T$}
\\
& = 2\LL^{t_{\simp}}(T)+\LL^{t_{\simp}}(Z_{2})
\\
& \ \ \ \ \vdots
\\
& \leqslant n\LL^{t_{\simp}}(T)+\LL^{t_{\simp}}(Z_{n})
\\
& \leqslant n\LL^{t_{\simp}}(T)+\LL^{t_{\simp}}(T)
 \tag{$Z_{n}\in\add T$}
\\
& = (n+1)\LL^{t_{\simp}}(T).
\end{align*}
\end{proof}

In the case for $\simp = \{ S \in \mod\itLamb \text{ is simple } \mid \pd S <\infty \}$,
we define $\LL^{\infty}(M) := \LL^{t_{\simp}}(M)$ in this situation,
see \cite[Definition 4.2]{huard2009finitistic}.
Then the following corollary holds.

\begin{corollary}\label{coro:LL-infty}
Let $0  \longrightarrow L\longrightarrow  M\longrightarrow N\longrightarrow 0$
be an exact sequence in $\mod\itLamb$.
Then the following statements hold.
\begin{itemize}
  \item[\rm(1)]
    $\max\{\LL(L), \LL(N)\}  \leqslant \LL(M) \leqslant \LL(L)+\LL(N).$
  \item[\rm(2)]
    $\max\{\LL^{\infty}(L), \LL^{\infty}(N)\} \leqslant \LL^{\infty}(M) \leqslant \LL^{\infty}(L) + \LL^{\infty}(N)$.
  \item[\rm(3)]
    If $\LL^{\infty}(L)=0$ {\rm(}resp. $\LL^{\infty}(N)=0${\rm)},
    then $\LL^{\infty}(N)=\LL^{\infty}(M)$ {\rm(}resp. $\LL^{\infty}(L)=\LL^{\infty}(M)${\rm)}.
\end{itemize}
\end{corollary}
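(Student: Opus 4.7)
The plan is to derive all three parts as immediate specializations of Theorem \ref{thm:main 1}, since that theorem is stated uniformly for an arbitrary set $\simp$ of simple modules, and both $\LL$ and $\LL^{\infty}$ are the $t_{\simp}$-radical layer length for specific choices of $\simp$.

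For part (1), I would choose $\simp = \varnothing$. Proposition \ref{prop:rad-layer} then gives $\LL^{t_{\simp}}(X) = \LL(X)$ for every $X \in \mod\itLamb$, so the inequality
\[ \max\{\LL^{t_{\simp}}(L), \LL^{t_{\simp}}(N)\} \leqslant \LL^{t_{\simp}}(M) \leqslant \LL^{t_{\simp}}(L) + \LL^{t_{\simp}}(N) \]
supplied by Theorem \ref{thm:main 1} is exactly what is claimed.

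For part (2), I would choose $\simp = \{ S \in \mod\itLamb \text{ is simple} \mid \pd S < \infty \}$, which is precisely the choice used to define $\LL^{\infty}$ in the paragraph preceding the corollary. Theorem \ref{thm:main 1} applied to this $\simp$ gives the chain of inequalities for $\LL^{\infty}$ verbatim.

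For part (3), I would invoke the ``In particular'' clause of Theorem \ref{thm:main 1} with the same $\simp$ as in part (2): if $\LL^{\infty}(L) = \LL^{t_{\simp}}(L) = 0$, then $\LL^{t_{\simp}}(N) = \LL^{t_{\simp}}(M)$, which reads $\LL^{\infty}(N) = \LL^{\infty}(M)$; the dual case is symmetric. There is no real obstacle here — the corollary is a direct rewording of the theorem under two instantiations of $\simp$, and the main conceptual work was done in establishing Theorem \ref{thm:main 1}. The only thing worth being careful about is simply citing Proposition \ref{prop:rad-layer} so the identification $\LL^{t_{\varnothing}} = \LL$ is explicit.
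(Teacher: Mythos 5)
Your proposal is correct and follows essentially the same route as the paper: parts (1) and (2) are obtained by specializing $\simp$ in Theorem \ref{thm:main 1} to $\varnothing$ (via Proposition \ref{prop:rad-layer}) and to $\{S \text{ simple} \mid \pd S < \infty\}$ respectively. For part (3) you invoke the ``In particular'' clause of Theorem \ref{thm:main 1}, whereas the paper re-derives it by squeezing $\LL^{\infty}(N) \leqslant \LL^{\infty}(M) \leqslant \LL^{\infty}(L)+\LL^{\infty}(N) = \LL^{\infty}(N)$ directly from part (2); these are the same argument, only packaged at different levels of the chain of results.
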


\begin{proof}
(1) and (2) are particular cases of Theorem \ref{thm:main 1}. We need prove (3).

If $\LL^{\infty}(L)=0$, then we have
\[ \LL^{\infty}(N)
 = \max\{\LL^{\infty}(L), \LL^{\infty}(N)\}
 \leqslant \LL^{\infty}(M)
 \leqslant \LL^{\infty}(L) + \LL^{\infty}(N)
 = \LL^{\infty}(N) \]
by (2). Thus, $\LL^{\infty}(M)=\LL^{\infty}(N)$ in this situation.
We can show that $\LL^{\infty}(N)=0$ admits $\LL^{\infty}(L)=\LL^{\infty}(M)$ by similar way.
\end{proof}

\begin{remark} \rm
Note that the functions Loewy length $\LL$ and infinite layer length $\LL^{\infty}$
are particular radical layer length, see more details \cite{huard2013layer,huard2009finitistic}. Corollary \ref{coro:LL-infty} (1) is a classical result.
The first ``$\leqslant$'' in Corollary \ref{coro:LL-infty} (2)
is first established in \cite[Proposition 4.5 (a) (b)]{huard2009finitistic}.
\end{remark}

\begin{corollary}\label{coro:rad-LL}
Assume $\LL^{t_{\simp}}(\itLamb)=n$. Then for each $M\in \mod\itLamb$,
we have $\LL^{t_{\simp}}(F_{t_{\simp}}^{i}(M))\leqslant n-i$ for each $0\leqslant i \leqslant n$.
\end{corollary}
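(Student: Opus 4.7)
The plan is to derive the bound $\LL^{t_{\simp}}(F_{t_{\simp}}^{i}(M)) \leqslant n-i$ as a direct consequence of Definition \ref{radical-length} combined with the module-theoretic description of $t_{\simp}(F_{t_{\simp}}^{k}(-))$ supplied by Lemma \ref{lemm:t-rad 2}. There is no need to invoke Theorem \ref{thm:main 1} or to write down any short exact sequence or filtration of $M$: the argument is purely a composition-of-functors calculation, reducing everything from the arbitrary module $M$ to the regular module $\itLamb_{\itLamb}$.

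By Definition \ref{radical-length}, proving $\LL^{t_{\simp}}(F_{t_{\simp}}^{i}(M)) \leqslant n-i$ amounts to exhibiting an integer $j$ with $0 \leqslant j \leqslant n-i$ such that $t_{\simp}(F_{t_{\simp}}^{j}(F_{t_{\simp}}^{i}(M))) = 0$. I would take the extreme choice $j = n-i$, so that the expression collapses via the trivial iteration identity $F_{t_{\simp}}^{n-i} \circ F_{t_{\simp}}^{i} = F_{t_{\simp}}^{n}$ to the single quantity $t_{\simp}(F_{t_{\simp}}^{n}(M))$. The whole problem is therefore to verify that this last module vanishes.

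For this, I would apply Lemma \ref{lemm:t-rad 2} with the exponent $n$, which yields the identification
\[
t_{\simp}\bigl(F_{t_{\simp}}^{n}(M)\bigr) \;=\; M \cdot t_{\simp}\bigl(F_{t_{\simp}}^{n}(\itLamb_{\itLamb})\bigr).
\]
Since $\LL^{t_{\simp}}(\itLamb) = n$ by hypothesis, Lemma \ref{lemm:t-rad 4} applied to $X = \itLamb_{\itLamb}$ gives $t_{\simp}(F_{t_{\simp}}^{n}(\itLamb_{\itLamb})) = 0$, so the right-hand side vanishes. Consequently $t_{\simp}(F_{t_{\simp}}^{n-i}(F_{t_{\simp}}^{i}(M))) = 0$, which by Definition \ref{radical-length} is precisely the statement $\LL^{t_{\simp}}(F_{t_{\simp}}^{i}(M)) \leqslant n-i$.

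There is no genuine obstacle here; the heavy lifting has already been done in Lemma \ref{lemm:t-rad 2}, which transfers the vanishing question from $M$ to the ring $\itLamb_{\itLamb}$. The only subtlety worth recording is that the right-multiplication $M \cdot t_{\simp}(F_{t_{\simp}}^{n}(\itLamb_{\itLamb}))$ is well-defined because $t_{\simp}(F_{t_{\simp}}^{n}(\itLamb_{\itLamb}))$ is a two-sided ideal of $\itLamb$ by Lemma \ref{lemm:t-rad 1}(5), so that the resulting product is genuinely the submodule $t_{\simp}(F_{t_{\simp}}^{n}(M))$ of $M$ rather than merely an abelian subgroup.
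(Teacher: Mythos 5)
Your proof is correct, and it is a genuinely different — and arguably cleaner — route than the one in the paper. The paper's proof first invokes an external result (Lemma 2.6 of the authors' earlier work in {\em J.\ Algebra} 556 (2020)), which says that if $\LL^{t_{\simp}}(M)=m$ then $\LL^{t_{\simp}}(F_{t_{\simp}}^i(M))=m-i$, and then bounds $\LL^{t_{\simp}}(M)\leqslant\LL^{t_{\simp}}(\itLamb)=n$ by choosing a free precover $\itLamb^{\oplus I}\twoheadrightarrow M$ and applying the short-exact-sequence inequality of Theorem \ref{thm:main 1} together with Lemma \ref{lemm:basic-prop}(3). You instead bypass both the external citation and the free resolution: you reduce directly to the single vanishing statement $t_{\simp}\bigl(F_{t_{\simp}}^{n}(M)\bigr)=0$, transfer it to the regular module via Lemma \ref{lemm:t-rad 2}, and finish with Lemma \ref{lemm:t-rad 4}. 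The one point worth making explicit (which you did implicitly use) is that $\LL^{t_{\simp}}(N)\leqslant k$ is equivalent to $t_{\simp}(F_{t_{\simp}}^{k}(N))=0$, not merely to the existence of \emph{some} $j\leqslant k$ with $t_{\simp}(F_{t_{\simp}}^{j}(N))=0$; this follows because $t_{\simp}(F_{t_{\simp}}^{j}(N))=0$ forces $F_{t_{\simp}}^{j+1}(N)=\rad\bigl(t_{\simp}(F_{t_{\simp}}^{j}(N))\bigr)=0$, so the vanishing persists for all larger exponents. With that observation in place your choice $j=n-i$ is indeed the decisive one, and the identity $F_{t_{\simp}}^{n-i}\circ F_{t_{\simp}}^{i}=F_{t_{\simp}}^{n}$ is just iterated composition. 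Your approach buys self-containment: it stays entirely inside the lemmas proved in this paper and avoids appealing to Theorem \ref{thm:main 1} or to the external reference, at the (small) cost of proving only the inequality rather than the sharper equality recorded in the cited Lemma 2.6.
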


\begin{proof}
By \cite[Lemma 2.6]{zheng2020upper}, if $M\in \mod\itLamb$ is a $\itLamb$-module $M$ with $\LL^{t_{\simp}}(M)=n$, then $\LL^{t_{\simp}}(F_{t_{\simp}}^i(M))=n-i$ for each $0\leqslant i \leqslant n$.
Obviously, any module $M$ has a free precover $f: A^{\oplus I} \to M$ which induces a short exact sequence
\[ 0 \To{}{} \ker f \To{}{} A^{\oplus I} \To{}{} M \To{}{} 0, \]
where $I$ is a finite index set.
It yields $\LL^{t_{\simp}}(M) \leqslant \LL^{t_{\simp}}(\itLamb) \leqslant n$
by using Theorem \ref{thm:main 2} as required.
\end{proof}

\subsubsection{The second main result and some corollaries}
\label{subsubsect:main result 2}
In this subdivision, we show the following theorem which is the second result of our paper.

\begin{theorem}\label{thm:main 2}
If $\itLamb$ is representation-finite, then
\[\Big\{
         \Big\lceil
            \dfrac{\LL^{t_{\simp}}(\itLamb)}{d}
         \Big\rceil
         -1
     \ \Big|\
         d \in \NN
         \text{ and }
         1 \leqslant d < \LL^{t_{\simp}}(\itLamb)
 \Big\}
\subseteq \ospec(\itLamb),\]
where $\lceil \alpha\rceil$ is the least integer greater than $\alpha.$
\end{theorem}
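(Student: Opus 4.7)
The plan is, for each $d$ with $1 \leqslant d < L$ (writing $L := \LL^{t_{\simp}}(\itLamb)$), to exhibit an explicit strong generator $T_d$ of $\mod\itLamb$ whose generation time equals $\lceil L/d \rceil - 1$, and then invoke Lemma \ref{belongtoOrlov}. Using the representation-finiteness of $\itLamb$, I would define $T_d$ to be the direct sum of representatives of the (finitely many) isoclasses of indecomposable $\itLamb$-modules $X$ with $\LL^{t_{\simp}}(X) \leqslant d$; then Lemma \ref{lemm:basic-prop}(3) gives $\LL^{t_{\simp}}(T_d) \leqslant d$, and the same lemma combined with Krull--Schmidt shows that a module $N$ lies in $\add T_d$ if and only if $\LL^{t_{\simp}}(N) \leqslant d$.

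For the upper bound $[T_d]_{\lceil L/d \rceil} = \mod\itLamb$, I would set $k := \lceil L/d \rceil - 1$ and, for an arbitrary $M \in \mod\itLamb$, analyse the descending chain of submodules
\[0 \subseteq F_{t_{\simp}}^{kd}(M) \subseteq F_{t_{\simp}}^{(k-1)d}(M) \subseteq \cdots \subseteq F_{t_{\simp}}^d(M) \subseteq M.\]
The bottom piece satisfies $\LL^{t_{\simp}}(F_{t_{\simp}}^{kd}(M)) \leqslant L - kd \leqslant d$ by Corollary \ref{coro:rad-LL}, the last inequality holding because the choice of $k$ forces $kd \geqslant L - d$; so $F_{t_{\simp}}^{kd}(M) \in \add T_d$. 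For an intermediate successive quotient, writing $N := F_{t_{\simp}}^{id}(M)$, I would apply Lemma \ref{lemm:t-rad 2} together with the inclusion $t_{\simp}(F_{t_{\simp}}^d(N)) \subseteq F_{t_{\simp}}^d(N)$ to compute
\[t_{\simp}\bigl(F_{t_{\simp}}^d\bigl(N/F_{t_{\simp}}^d(N)\bigr)\bigr) = \bigl(t_{\simp}(F_{t_{\simp}}^d(N)) + F_{t_{\simp}}^d(N)\bigr)/F_{t_{\simp}}^d(N) = 0,\]
yielding $\LL^{t_{\simp}}(N/F_{t_{\simp}}^d(N)) \leqslant d$ and hence $N/F_{t_{\simp}}^d(N) \in \add T_d$. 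All $k+1$ successive pieces being in $\add T_d$ then forces $M \in [T_d]_{k+1} = [T_d]_{\lceil L/d \rceil}$, completing the upper bound.

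For the matching lower bound, I would pick an indecomposable direct summand $P$ of $\itLamb$ with $\LL^{t_{\simp}}(P) = L$ (which exists because $\LL^{t_{\simp}}$ of a direct sum is the maximum of those of its summands). If $P$ were in $[T_d]_{\lceil L/d \rceil - 1}$, then Corollary \ref{coro:LL} would give $L = \LL^{t_{\simp}}(P) \leqslant (\lceil L/d \rceil - 1)\LL^{t_{\simp}}(T_d) \leqslant (\lceil L/d \rceil - 1) d < L$, a contradiction. Therefore $[T_d]_{\lceil L/d \rceil - 1} \subsetneq \mod\itLamb = [T_d]_{\lceil L/d \rceil}$, and Lemma \ref{belongtoOrlov} delivers $\lceil L/d \rceil - 1 \in \ospec(\itLamb)$.

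The main obstacle I anticipate is the ideal-theoretic verification that every intermediate quotient $N/F_{t_{\simp}}^d(N)$ has $\LL^{t_{\simp}} \leqslant d$, which rests on the identity of Lemma \ref{lemm:t-rad 2} (expressing $t_{\simp}\circled F_{t_{\simp}}^d$ as right multiplication by the ideal $t_{\simp}(F_{t_{\simp}}^d(\itLamb_{\itLamb}))$). Once that identity is in hand, the remainder of the proof simply chains Corollaries \ref{coro:rad-LL} and \ref{coro:LL} together in a direct way.
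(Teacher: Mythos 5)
Your proposal is correct and follows the same global strategy as the paper: same generator $T_d$ (direct sum of all indecomposables with $\LL^{t_\simp}$-value at most $d$), same filtration of an arbitrary $M$ by the submodules $F_{t_\simp}^{id}(M)$, and the same use of Corollary~\ref{coro:LL} to rule out generation in fewer steps. The genuine difference is in how you establish that the successive quotients $F_{t_\simp}^{(i-1)d}(M)/F_{t_\simp}^{id}(M)$ lie in $\add T_d$. The paper first constructs, via pushouts, short exact sequences $0 \to F_{t_\simp}^{i}(M) \to F_{t_\simp}^{i-1}(M) \to D_i \to 0$ with $\LL^{t_\simp}(D_i) \leqslant 1$, and then needs the auxiliary Lemma~\ref{lemm:family of ses} (which itself relies on Theorem~\ref{thm:main 1}) to add these single-step bounds together and reach $\LL^{t_\simp}(D_{[ud,(u+1)d]}) \leqslant d$. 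You instead prove the bound on the $d$-step quotient $N/F_{t_\simp}^{d}(N)$ in one stroke, via the ideal-theoretic identity of Lemma~\ref{lemm:t-rad 2}: writing $I = t_\simp(F_{t_\simp}^{d}(\itLamb_\itLamb))$, you observe $(N/F_{t_\simp}^{d}(N))\cdot I = (NI + F_{t_\simp}^{d}(N))/F_{t_\simp}^{d}(N) = (t_\simp(F_{t_\simp}^{d}(N)) + F_{t_\simp}^{d}(N))/F_{t_\simp}^{d}(N) = 0$, using only that $t_\simp$ is a subfunctor of the identity. This bypasses both the pushout diagram and Lemma~\ref{lemm:family of ses} entirely, and also avoids the somewhat delicate index bookkeeping with $n = dm + r$ that the paper engages in. The trade-off is that your argument leans more heavily on $\mod\itLamb$-specific module/ideal manipulations (Lemmas~\ref{lemm:t-rad 1} and~\ref{lemm:t-rad 2}), whereas the paper's route through $D_i$ and Lemma~\ref{lemm:family of ses} is expressed more in the language of short exact sequences and would transfer more readily to a general Abelian setting; but since the whole radical-layer-length framework already lives over $\mod\itLamb$, your version is a clean simplification here.
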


We need the following lemma for proving Theorem \ref{thm:main 2}

\begin{lemma} \label{lemm:family of ses}
A family
\[(0 \To{}{} M_i \To{\varphi_i}{} M_{i-1} \To{}{} N_i \To{}{} 0)_{1\leqslant i\leqslant d}\]
of short exact sequences induces a short exact sequence
\[ 0 \To{}{} M_d \To{}{} M_0 \To{}{} N_{[0,d]} \To{}{} 0 \]
satisfying $\LL^{t_{\simp}}(N_{[0,d]}) \leqslant\sum_{i=1}^{d}\LL^{t_{\simp}}(N_{i})$,
where $d\geqslant 1.$
\end{lemma}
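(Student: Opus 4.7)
The plan is to build the required short exact sequence by iteratively composing the given monomorphisms, and then bound the radical layer length of the cokernel by a telescoping application of Theorem \ref{thm:main 1}.

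First, I would compose the monomorphisms $\varphi_i: M_i \hookrightarrow M_{i-1}$ to obtain an injection $\Phi := \varphi_1 \circled \varphi_2 \circled \cdots \circled \varphi_d : M_d \hookrightarrow M_0$, and define $N_{[0,d]} := \CoKer \Phi = M_0/\Phi(M_d)$. This gives the desired short exact sequence
\[ 0 \To{}{} M_d \To{\Phi}{} M_0 \To{}{} N_{[0,d]} \To{}{} 0. \]
Identifying each $M_i$ with its image in $M_0$ yields a chain of submodules $\Phi(M_d) \subseteq M_{d-1} \subseteq \cdots \subseteq M_1 \subseteq M_0$ together with isomorphisms $M_{i-1}/M_i \cong N_i$ for each $1 \leqslant i \leqslant d$.

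Next, I would set $Q_i := M_i / \Phi(M_d)$ for $0 \leqslant i \leqslant d$; then $Q_d = 0$ and $Q_0 = N_{[0,d]}$, and the third isomorphism theorem produces a short exact sequence
\[ 0 \To{}{} Q_i \To{}{} Q_{i-1} \To{}{} N_i \To{}{} 0 \]
for each $1 \leqslant i \leqslant d$. Applying Theorem \ref{thm:main 1} to this sequence yields $\LL^{t_{\simp}}(Q_{i-1}) \leqslant \LL^{t_{\simp}}(Q_i) + \LL^{t_{\simp}}(N_i)$, and a straightforward downward induction on $i$ starting from $\LL^{t_{\simp}}(Q_d) = 0$ gives $\LL^{t_{\simp}}(Q_0) \leqslant \sum_{i=1}^{d} \LL^{t_{\simp}}(N_i)$, which is the claimed bound.

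There is no serious obstacle here: the content of the statement lies entirely in Theorem \ref{thm:main 1}, and what remains is essentially categorical bookkeeping. The only place requiring a bit of care is verifying that the iterated quotients $Q_{i-1}/Q_i$ really do recover $N_i$ (a direct application of the third isomorphism theorem), and that the inductive telescoping is set up in the correct direction (starting from $Q_d = 0$ rather than $Q_0$). Once these formalities are in place, the inequality follows at once.
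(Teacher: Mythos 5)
Your proof is correct and follows the same strategy as the paper's: filter $N_{[0,d]} = M_0/\Phi(M_d)$ by the subquotients $Q_i = M_i/\Phi(M_d)$, identify the successive quotients as the $N_i$, and telescope the subadditivity from Theorem \ref{thm:main 1}. Your appeal to the third isomorphism theorem is a slightly more direct route to the same family of short exact sequences $0 \to Q_i \to Q_{i-1} \to N_i \to 0$ that the paper extracts from a pushout/snake-lemma diagram; your indexing is also cleaner than the paper's, which has off-by-one slips in its family $(0\to N_{[d-j,d]}\to N_{[d-(j+1),d]}\to N_{d-(j+1)}\to 0)$ and cites Theorem \ref{thm:main 2} where it means Theorem \ref{thm:main 1}.
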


\begin{proof}
The case for $d=1$ is trivial. Next, we show this lemma holds for $d\geqslant 2$.

For simplification, we define $N_{[u,u+i]} := M_u/M_{u+i}$ which is induced by the cokernel of the composition
\[\varphi_{[u,u+i]}:=\varphi_{u+1}\circled\cdots\circled\varphi_{u+i}: M_{u+i} \to M_u\]
\begin{center}
($0\leqslant u<u+i\leqslant d$, $1\leqslant i\leqslant d-u$).
\end{center}
Since composition $\varphi_{[u-1,u+i]}$ is always monomorphic,
$\varphi_{[0,d]}$ induces a short exact sequence which is of the form
\begin{center}
$0 \To{}{} M_d \To{\varphi_{[0,d]}}{} M_0 \To{}{} N_{[0,d]} \To{}{} 0$,
\end{center}
where $N_{[0,d]}\cong M_0/M_d$.
Moreover, for any $1\leqslant i\leqslant d$, the epimorphism $\phi_{[d-(i+1),d]}: M_{d-(i+1)} \to N_{[d-(i+1), d]}$ induced by the cokernel of $\varphi_{[d-(i+1),d]}$
and the monomorphism $\phi_{d-i}: N_{[d-i,d]} \to N_{[d-(i+1),d]}$ obtained by the universal property of the cokernel of $\varphi_{[d-i,d]}$ provide the following diagram
\begin{center}
\begin{tikzpicture}
\filldraw [top color = cyan!25, bottom color=blue!25]
      (8.6,6.5) to[out=   0,in=  90] (9.8,3.3) to[out= -90,in=   0]
      (8.6,0.0) to[out= 180,in= -90] (7.4,3.3) to[out=  90,in= 180]
      (8.6,6.5);
\draw(7,4) node{$\spadesuit$};
\draw(0,0) node[above right]{$
\xymatrix@C=1.5cm{
 & & 0 \ar[d] & 0 \ar[d]
 & \\
   0 \ar[r]
 & M_{d}
   \ar[r]^{\varphi_{[d-i,d]}}
   \ar@{=}[d]
 & M_{d-i}
   \ar[r]
   \ar[d]^{\varphi_{d-i}}
 & N_{[d-i,d]}
   \ar[r]
   \ar[d]^{\phi_{d-i}}
 & 0
 \\
   0 \ar[r]
 & M_{d}
   \ar[r]_{\varphi_{[d-(i+1),d]}}
 & M_{d-(i+1)}
   \ar[r]_{\phi_{[d-(i+1),d]}}
   \ar[d]
 & N_{[d-(i+1),d]}
   \ar[d]
   \ar[r]
 & 0 \\
 &
 & N_{d-(i+1)}
   \ar@{=}[r]
   \ar[d]
 & N_{d-{i+1}}
   \ar[d]
 &  \\
 && 0 & 0 & }
$};
\end{tikzpicture}
\end{center}
commutes, whose rows and columns are exact, and the commutative square marked by ``$\spadesuit$''
is both a push-out and pull-back square. Of course, the above diagram can also be induced by using the snake lemma.
Then we obtain a family
\[ (0 \To{}{} N_{[d-j,d]} \To{}{} N_{[d-(j+1),d]} \To{}{} N_{d-(j+1)} \To{}{} 0)_{0\leqslant j\leqslant d-2} \]
of short exact sequences (see the shadow of the above diagram), where $N_{[d,d]} := N_d$.
Then, by Theorem \ref{thm:main 2}, we have
\begin{align*}
    \LL^{t_{\simp}}(N_{[1,d]})
& \leqslant \LL^{t_{\simp}}(N_{[2,d]})
   +  \LL^{t_{\simp}}(N_{1})
  \\
& \leqslant \LL^{t_{\simp}}(N_{[3,d]})
   +  \LL^{t_{\simp}}(N_{2})
   +  \LL^{t_{\simp}}(N_{1})\\
& \,\,\,\,\vdots \\
& \leqslant \LL^{t_{\simp}}(N_d)
   +  \cdots
   +  \LL^{t_{\simp}}(N_2)
   +  \LL^{t_{\simp}}(N_1)
   =  \sum_{i=1}^{d} \LL^{t_{\simp}}(N_{i}).
\end{align*}
\end{proof}

Now we prove Theorem \ref{thm:main 2}.

\begin{proof}[$\pmb{\text{The proof of Theorem \ref{thm:main 2}}}$]
This proof is divided to two parts (i) and (ii).

(i) In the case of $\LL^{t_\simp}(\itLamb)=1$, we have
\begin{align*}
\Big\{
         \Big\lceil
            \dfrac{\LL^{t_{\simp}}(\itLamb)}{d}
         \Big\rceil
         -1
     \ \Big|\
         d \in \NN
         \text{ and }
         1 \leqslant d < \LL^{t_{\simp}}(\itLamb)
 \Big\}\Big|_{\LL^{t_{\simp}}(\itLamb)=1}
= \{ 0 \} \subseteq \ospec(\itLamb).
\end{align*}

(ii) Assume that $\LL^{t_{\simp}}(\itLamb) = n\geqslant 2$, and for any $1\leqslant d < \LL^{t_{\simp}}(\itLamb)$, assume $n=dm+r$, where $0\leqslant r < d$.
Thus, we need show $m-1\in\ospec(\itLamb)$ for proving this theorem.
To do this, we construct a $\itLamb$-module $T$ such that $\gent_{\mod\itLamb}(T)=m-1$ in this proof.

First of all, notice that, for arbitrary $M\in\mod\itLamb$, we have the following two short exact sequences
\begin{align}\label{formula:twoses}
& 0 \To{}{} t_{\simp}\circled F^{i-1}_{t_{\simp}}(M)
    \To{f_{i-1}}{} F^{i-1}_{t_{\simp}}(M)
    \To{}{} q_{t_{\simp}}\circled F^{i-1}_{t_{\simp}}(M) \nonumber
    \To{}{} 0 \\ \text{ and }
& 0 \To{}{} F^{i}_{t_{\simp}}(M)
    \To{}{} t_{\simp}\circled F^{i-1}_{t_{\simp}}(M)
    \To{g_{i-1}}{} \top(t_{\simp}\circled F^{i}_{t_{\simp}}(M) )
    \To{}{} 0
\end{align}
for each $1\leqslant i\leqslant n-1$. Here, $F^{i}_{t_{\simp}}(M) = \rad (t_{\simp}\circled F^{i-1}_{t_{\simp}}(M))$, see Definition \ref{radical-length}.
It follows that the following push-out square
\[\xymatrix{
 & t_{\simp}\circled F_{t_{\simp}}^{i-1}(M)
   \ar[r]^{f_{i-1}} \ar[d]_{g_{i-1}}
 & F_{t_{\simp}}^{i-1}(M)
   \ar[d] \\
 & \top (t_{\simp}\circled F_{t_{\simp}}^{i-1}(M))
   \ar[r]
 & D_i
}\]
provides the diagram
\[\xymatrix{
& 0 \ar[d] & 0 \ar[d] & &  \\ 
& F_{t_{\simp}}^{i}(M)
  \ar@{=}[r] \ar[d]
& F_{t_{\simp}}^{i}(M)
  \ar[d]
&
&  \\ 
  0 \ar[r]
& t_{\simp}\circled F_{t_{\simp}}^{i-1}(M)
  \ar[r]^{f_{i-1}}\ar[d]_{g_{i-1}}
& F_{t_{\simp}}^{i-1}(M)
  \ar[r]
  \ar[d]
& q_{t_{\simp}}\circled F_{t_{\simp}}^{i-1}(M)
  \ar@{=}[d]
  \ar[r]
& 0  \\ 
  0 \ar[r]
& \top(t_{\simp}\circled F_{t_{\simp}}^{i-1}(M))
  \ar[r]
  \ar[d]
& D_{i}
  \ar[r]
  \ar[d]
& q_{t_{\simp}}\circled F_{t_{\simp}}^{i-1}(M)
  \ar[r]
& 0 \\
& 0 & 0 & & }
\]
commutes, whose rows and columns are exact. Thus, we obtain that
\begin{align}\label{formula:1st}
\LL^{t_{\simp}}(D_i)
\leqslant \LL^{t_{\simp}}(\top(t_{\simp}\circled F_{t_{\simp}}^{i-1}(M)))
  + \LL^{t_{\simp}}(q_{t_{\simp}}\circled F_{t_{\simp}}^{i-1}(M))
\leqslant 1+0 = 1
\end{align}
holds for each $1\leqslant i \leqslant n-1$, and there is a family
\[ (0 \To{}{} F^i_{t_{\simp}}(M)
      \To{\varphi_i}{} F^{i-1}_{t_{\simp}}(M)
      \To{}{} D_i
      \To{}{} 0
   )_{1\leqslant i\leqslant n-1} \]
of short exact sequences.

Second, the composition $\varphi_{[ud,(u+1)d]} := \varphi_{(u+1)d} \circled \cdots \circled \varphi_{ud+1}$
($1\leqslant u < m$) (resp. $\varphi_{[md,md+r]} := \varphi_{md+r} \circled \cdots \circled \varphi_{md+1}$)
is monomorphic since all $\varphi_i$ are monomorphic,
it induces a short exact sequence which is of the form
\begin{align}
& 0 \To{}{} F^{(u+1)d}_{t_{\simp}}(M)
    \To{}{} F^{ud}_{t_{\simp}}(M)
    \To{}{} D_{[ud,(u+1)d]}
    \To{}{} 0  \label{formula:2nd-1} \\
(\text{resp. }
& 0 \To{}{} F^{(m-1)d-r}_{t_{\simp}}(M)
    \To{}{} F^{(m-2)d}_{t_{\simp}}(M)
    \To{}{} D_{[(m-2)d,(m-1)d-r]}
    \To{}{} 0). \label{formula:2nd-2}
\end{align}
where
\begin{center}
$D_{[ud,(u+1)d]} \cong F^{ud}_{t_{\simp}}(M)/F^{(u+1)d}_{t_{\simp}}(M)$

(resp. $D_{[(m-2)d,(m-1)d-r]} \cong F^{(m-2)d}_{t_{\simp}}(M)/F^{(m-1)d-r}_{t_{\simp}}(M)$).
\end{center}
By Lemma \ref{lemm:family of ses} and (\ref{formula:1st}), we have
\[ \LL^{t_{\simp}}(D_{[ud,(u+1)d]})
\leqslant \sum_{i=ud+1}^{(u+1)d} \LL^{t_{\simp}}(D_i)
= d \]
\[\Big(\text{resp. }
\LL^{t_{\simp}}(D_{[(m-2)d,(m-1)d-r]})
\leqslant \sum_{i = (m-2)d+1}^{(m-1)d-r} \LL^{t_{\simp}}(D_i)
\leqslant d
\Big). \]
Then, by using (\ref{formula:2nd-1}) and (\ref{formula:2nd-2}), we have
\begin{align} \label{formula:2nd-3}
[M]_1
& = [\rad^0\circled t_{\simp}^0 (M)]_1 =[F_{t_{\simp}}^{0}(M)]_{1}
  \nonumber \\
& \subseteq [F_{t_{\simp}}^{d}(M)]_1
  \multi [D_{[0,d]}]_{1}
  \nonumber \\
& \subseteq [F_{t_{\simp}}^{2d}(M)]_1
  \multi [D_{[d,2d]}]_{1}\multi [D_{[0,d]}]_1
  \nonumber \\
& \;\;\; \vdots
  \nonumber \\
& \subseteq [F_{t_{\simp}}^{(m-2)d}(M)]_1
  \multi [D_{[(m-3)d,(m-2)d]}]_1
  \multi \cdots
  \multi [D_{[0,d]}]_1
  \nonumber \\
& \subseteq [F_{t_{\simp}}^{(m-1)d-r}(M)]_1
  \multi [D_{[(m-2)d, (m-1)d-r]}]_1
  \multi [D_{[(m-3)d, (m-2)d]}]_1
  \multi \cdots
  \multi [D_{[0,d]}]_1.
\end{align}

Finally, we construct the $\itLamb$-module $T$ as follows
\[ T=\bigoplus_{Z\in \mathcal{W}_d} Z, \]
where $\mathcal{W}_d := \{Z \in \mathsf{ind}(\mod\itLamb) \mid \LL^{t_{\simp}}(Z) \leqslant d\}$.
We claim that $[T]_m=\mod\itLamb$ and $[T]_{m-1}\subsetneq\mod\itLamb$ hold.
Indeed, by Lemma \ref{lemm:basic-prop} (3), we have
\[\LL^{t_{\simp}}(T)
= \LL^{t_{\simp}}
  \bigg(
    \bigoplus_{Z\in \mathcal{W}_d} Z
  \bigg)
= \sup\{\LL^{t_{\simp}}(Z) \mid Z\in\mathcal{W}_d \}
\leqslant d.\]
Then we have $[M]_1 \subseteq \overbrace{[T]_1 \multi [T]_1 \multi \cdots \multi [T]_1}^{m} = [T]_m$ by (\ref{formula:2nd-3}). In this case, $\mod\itLamb = [T]_{m}$.

On the other hand, if $[T]_{m-1} = \mod\itLamb$,
then, by Corollary \ref{coro:LL}, we obtain
\[  \LL^{t_{\simp}}(\itLamb)
\leqslant (m-1)\LL^{t_{\simp}}(T)
\leqslant (m-1)d=md-d< md-j
 =  \LL^{t_{\simp}}(\itLamb), \]
This is a contradiction.

Therefore, $\gent_{\mod\itLamb}(T)=m-1$, and then $m-1\in \ospec(\itLamb)$ holds.
\end{proof}

\begin{example}\rm
Take \[\itLamb=\kk\Q/\langle (\alpha_1\alpha_2\alpha_3\alpha_4)^5 \rangle,\]
where $\Q$ is defined as
\[\xymatrix{
1\ar[r]^{\alpha_{1}} & 2 \ar[d]^{\alpha_{2}}\\
4\ar[u]^{\alpha_{4}} & 3.\ar[l]^{\alpha_{3}}
}\]
We have seven items as follows:
\begin{itemize}
\item[(a)]
  Let $\simp=\varnothing$, then $\LL^{t_{\simp}}(\itLamb)$ by Proposition \ref{prop:rad-layer}.
  On the other hand,
  $\LL(\itLamb)=\sup\{\LL(P(i)) \mid 1\leqslant i \leqslant 4\}=23$,
  then by Theorem \ref{thm:main 2}, we have
  \[\{0,1,2,3,4,5,7,11,22\}\subseteq \ospec(\itLamb). \]

\item[(b)] Let $\simp=\{S(1)\},$ then $\LL^{t_{\simp}}(\itLamb) = \sup\{\LL^{t_{\simp}}(P(i)) \mid 1\leqslant i \leqslant 4\}=\LL^{t_{\simp}}(P(2))=18$.
  Furthermore, by Theorem \ref{thm:main 2}, we have
  \[\{0,1,2,3,4,5,8,17\}\subseteq \ospec(\itLamb).\]

\item[(c)] Let $\simp=\{S(2)\}, \{S(3)\},$ or $\{S(4)\}$, then
  $ \LL^{t_{\simp}}(\itLamb)=\sup\{\LL^{t_{\simp}}(P(i)) \mid 1\leqslant i \leqslant 4\}
   = \LL^{t_{\simp}}(P(2))=17.$
  Furthermore, by Theorem \ref{thm:main 2}, we have
  \[\{0,1,2,3,4,5,8,16\}\subseteq \ospec(\itLamb).\]

\item[(d)]  Let $\simp=\{S(1),S(2)\}, \{S(1),S(3)\},$ or $\{S(1),S(4)\}$, then
  $\LL^{t_{\simp}}(\itLamb) = \sup\{\LL^{t_{\simp}}(P(i)) \mid 1\leqslant i \leqslant 4\}=\LL^{t_{\simp}}(P(2))=12.$
  Furthermore, by Theorem \ref{thm:main 2}, we have
  \[\{0,1,2,3,5,11\}\subseteq \ospec(\itLamb).\]

\item[(e)]  Let $\simp=\{S(2),S(3)\}, \{S(2),S(4)\},$ or $\{S(3),S(4)\}$, then
  $\LL^{t_{\simp}}(\itLamb) = \sup\{\LL^{t_{\simp}}(P(i)) \mid 1 \leqslant i \leqslant 4\}=\LL^{t_{\simp}}(P(2))=11$.
  Furthermore, by Theorem \ref{thm:main 2}, we have
  \[\{0,1,2,3,5,10\}\subseteq \ospec(\itLamb).\]

\item[(f)]  Let $\simp=\{S(1),S(2),S(3)\},\{S(1),S(2),S(4)\},$ or $\{S(1),S(3),S(4)\},$
  then $\LL^{t_{\simp}}(\itLamb) = \sup\{\LL^{t_{\simp}}(P(i)) \mid 1\leqslant i \leqslant 4\} = \LL^{t_{\simp}}(P(2))=6$.
  Furthermore, by Theorem \ref{thm:main 2}, we have
  \[\{0,1,2,3,5\}\subseteq \ospec(\itLamb). \]

\item[(g)]  Let $\simp=\{S(2),S(3),S(4)\}$, then $\LL^{t_{\simp}}(\itLamb) = \sup\{\LL^{t_{\simp}}(P(i)) \mid 1\leqslant i \leqslant 4\}=\LL^{t_{\simp}}(P(2))=5.$
  Furthermore, by Theorem \ref{thm:main 2}, we have
  \[\{0,1,2,4\}\subseteq \ospec(\itLamb).\]
\end{itemize}

By (a)--(g), we have
\[\{0,1,2,3,4,5,6,7,8,10,11,16,17,22\}\subseteq \ospec(\itLamb).\]
\end{example}

\begin{prop-coro}\label{prop-coro:oriented cycle}
The Orlov spectrum $\ospec(\itLamb)$ of the finite-dimensional algebra $\itLamb=\kk\Q/I$ given by
the quiver shown in Figure \ref{fig:path alg3} and the admissible ideal
$I=\langle \alpha_{1}\alpha_{2}\cdots\alpha_{m}\rangle$ and $2\leqslant m \leqslant n-1$
satisfies
\begin{align}\label{formula:oriented cycle}
  \{0,1,2,\cdots,m+n-1\}\subseteq\ospec(\itLamb).
\end{align}
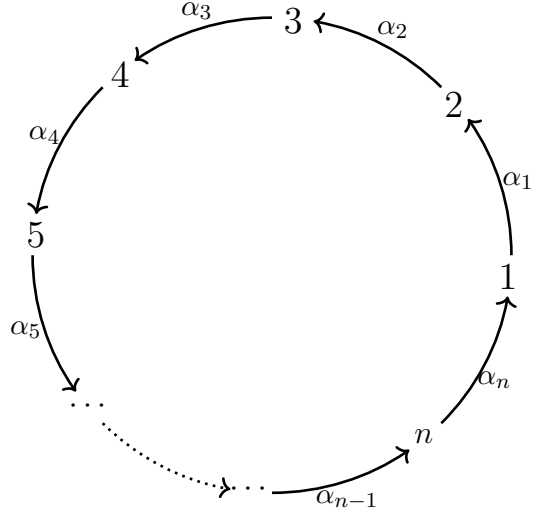
\begin{figure}[H]
\centering
\begin{tikzpicture}[scale=0.9]
\draw[->][line width=1pt][rotate=  0] (3.5,0) arc (0:35:3.5);
\draw[->][line width=1pt][rotate= 45] (3.5,0) arc (0:35:3.5);
\draw[->][line width=1pt][rotate= 90] (3.5,0) arc (0:35:3.5);
\draw[->][line width=1pt][rotate=135] (3.5,0) arc (0:35:3.5);
\draw[->][line width=1pt][rotate=180] (3.5,0) arc (0:35:3.5);
\draw[->][line width=1pt][rotate=225] (3.5,0) arc (0:35:3.5)[dotted];
\draw[->][line width=1pt][rotate=270] (3.5,0) arc (0:35:3.5);
\draw[->][line width=1pt][rotate=315] (3.5,0) arc (0:35:3.5);
\draw[rotate=  0] (3.45,-0.31) node{\Large\rotatebox{0}{$1$}};
\draw[rotate= 45] (3.45,-0.31) node{\Large\rotatebox{0}{$2$}};
\draw[rotate= 90] (3.45,-0.31) node{\Large\rotatebox{0}{$3$}};
\draw[rotate=135] (3.45,-0.31) node{\Large\rotatebox{0}{$4$}};
\draw[rotate=180] (3.45,-0.31) node{\Large\rotatebox{0}{$5$}};
\draw[rotate=225] (3.45,-0.31) node{\large\rotatebox{0}{$\cdots$}};
\draw[rotate=270] (3.45,-0.31) node{\large\rotatebox{0}{$\cdots$}};
\draw[rotate=315] (3.45,-0.31) node{\large\rotatebox{0}{$n$}};
\draw[rotate=  0] (3.6,1.1) node{\rotatebox{0}{$\alpha_{1}$}};
\draw[rotate= 45] (3.6,1.1) node{\rotatebox{0}{$\alpha_{2}$}};
\draw[rotate= 90] (3.6,1.1) node{\rotatebox{ 0}{$\alpha_{3}$}};
\draw[rotate=135] (3.6,1.1) node{\rotatebox{0}{$\alpha_{4}$}};
\draw[rotate=180] (3.6,1.1) node{\rotatebox{0}{$\alpha_{5}$}};
\draw[rotate=270] (3.6,1.1) node{\rotatebox{0}{$\alpha_{n-1}$}};

\draw[rotate=315] (3.6,1.0) node{\rotatebox{0}{$\alpha_{n}$}};
\end{tikzpicture}
\caption{The path algebra in Corollary/Proposition \ref{prop-coro:oriented cycle}}
\label{fig:path alg3}
\end{figure}
\end{prop-coro}

\begin{proof}
We have the five items as follows.
\begin{itemize}
\item[(a)] If $\simp = \varnothing$, then $\LL^{t_{\simp}}(\itLamb) = \LL(\itLamb)m+n$.
\item[(b)] If $\simp = \{S(2),S(3),\ldots,S(i)\}$ (for any $i$ satisfying $2\leqslant i\leqslant m$), then
  $\LL^{t_{\simp}}(\itLamb) = \sup\{P(i) \mid 1\leqslant i\leqslant n\} = \LL^{t_{\simp}}(P(2)) = m+n+2-2i$.
\item[(c)] If $\simp = \{S(1),S(2),S(3),\ldots,S(i)\}$ (for any $i$ satisfying $2\leqslant i\leqslant m$), then
  $\LL^{t_{\simp}}(\itLamb) = \sup\{P(i) \mid 1\leqslant i\leqslant n\} = \LL^{t_{\simp}}(P(2)) = m+n+1-2i$.
\item[(d)] If $\simp=\{S(1),S(2),S(3),\ldots,S(m),S(m+1),\ldots,S(i)\}$ (for any $i$ satisfying $m+2\leqslant i\leqslant n$),
  then $\LL^{t_{\simp}}(\itLamb)=\sup\{P(i) \mid 1\leqslant i \leqslant n\} = \LL^{t_{\simp}}(P(2))=n+2-i$.
\item[(e)] If $\simp=\{S(2),S(3),\ldots,S(m),S(m+1),\ldots,S(n)\}$,
  then $\LL^{t_{\simp}}(\itLamb)=\sup\{P(i) \mid 1\leqslant i \leqslant n\} = \LL^{t_{\simp}}(P(2))=1$.

\end{itemize}
By Theorem \ref{thm:main 2}, we obtain $\LL^{t_{\simp}}(\itLamb)-1\in\ospec(\itLamb)$.
Thus, (a)--(e) yields that $\{1,2,\cdots,m+n-1\}\subseteq\ospec(\itLamb)$.
Moreover, $\itLamb$ is representation-finite, it follows $0\in \ospec(\itLamb)$,
see Lemma \ref{lemm:0-in-ospec}. Therefore, (\ref{formula:oriented cycle}) holds.
\end{proof}

\section{Application: The Orlov spectrum with type $\protect\A_{n}$}

Let $\dA_n$ be the linearly oriented quiver with type $\A_n$, i.e.,
\[\dA_n = \xymatrix{1\ar[r]^{a_1} & 2 \ar[r]^{a_2} & \cdots \ar[r]^{a_{n-1}} & n}. \]
In this subsection, we compute the Orlov spectrum $\ospec(A_n)$ of $A_n:=\kk\dA_n$.

\begin{proposition} \label{prop-An}
$\{0,1,2,\cdots,n-1\}\subseteq\ospec(A_n)$.
\end{proposition}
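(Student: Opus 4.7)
The plan is to combine Lemma~\ref{lemm:0-in-ospec}, which already produces $0\in\ospec(A_n)$ from the representation-finiteness of $A_n$, with repeated applications of Theorem~\ref{thm:main 2} for cleverly chosen torsion pairs, in order to realise each value in $\{1,2,\ldots,n-1\}$ as a generation time.

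For each integer $m$ with $0\leqslant m\leqslant n-2$, set
\[
\simp_m:=\{S(1),S(2),\ldots,S(m)\}\qquad(\simp_0=\varnothing).
\]
Because $A_n$ is a serial algebra, every indecomposable is a uniserial module $M_{i,j}$ with top $S(i)$, socle $S(j)$ and composition factors $S(i),S(i+1),\ldots,S(j)$; in particular $P(i)=M_{i,n}$, and the submodules of $M_{a,b}$ are exactly the modules $M_{k,b}$ with $a\leqslant k\leqslant b+1$ (convention $M_{b+1,b}=0$). Since the top of $M_{k,b}$ is $S(k)$, Lemma~\ref{lemm:basic-prop}(1) yields the explicit formula
\[
t_{\simp_m}(M_{a,b})=M_{\max(a,\,m+1),\,b},
\]
and iterating $F_{t_{\simp_m}}=\rad\circ t_{\simp_m}$ gives $F_{t_{\simp_m}}^{i}(P(1))=M_{m+1+i,\,n}$ for every $i\geqslant 1$. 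Hence $\LL^{t_{\simp_m}}(P(1))=n-m$, and a parallel computation for the other indecomposable projectives shows that the supremum is attained here (the projectives $P(i)$ with $i\leqslant m+1$ all achieve $n-m$ while $P(i)$ with $i>m+1$ gives only $n-i+1\leqslant n-m$), so Lemma~\ref{lemm:basic-prop}(3) yields $\LL^{t_{\simp_m}}(A_n)=n-m$.

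Now apply Theorem~\ref{thm:main 2} to the torsion pair associated with $\simp_m$. Taking $d=1$ (valid because $1\leqslant 1<n-m$ whenever $m\leqslant n-2$) gives
\[
\Big\lceil\frac{n-m}{1}\Big\rceil-1=n-m-1\;\in\;\ospec(A_n).
\]
Letting $m$ run through $0,1,\ldots,n-2$ produces all of the values $n-1,n-2,\ldots,1$, and combining with $0\in\ospec(A_n)$ establishes the desired containment $\{0,1,\ldots,n-1\}\subseteq\ospec(A_n)$.

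The main obstacle will be the bookkeeping in the computation of $\LL^{t_{\simp_m}}$: although the serial structure of $\mod A_n$ makes the action of $t_{\simp_m}$ on each chain of submodules transparent, one must verify carefully that the alternation of $\rad$ and $t_{\simp_m}$ strips exactly one composition factor per iteration after the initial truncation, and that the maximum of $\LL^{t_{\simp_m}}(P(i))$ over $i$ is really $n-m$.
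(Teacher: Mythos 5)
Your proof is correct and follows essentially the same route as the paper: compute $\LL^{t_{\simp}}(A_n)=n-j$ for $\simp=\{S(1),\ldots,S(j)\}$ (taking the supremum over the projectives $P(i)$), then apply Theorem~\ref{thm:main 2} with $d=1$ to harvest $n-j-1\in\ospec(A_n)$. The only cosmetic difference is that you obtain $0\in\ospec(A_n)$ from Lemma~\ref{lemm:0-in-ospec} while stopping at $m=n-2$, whereas the paper pushes item (c) to $j=n-1$ where $\LL^{t_{\simp}}(A_n)=1$ and the index set in Theorem~\ref{thm:main 2} is formally empty — your variant sidesteps that degenerate case and is arguably the cleaner bookkeeping.
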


\begin{proof}

For each $1 \leqslant i\leqslant n-1$, we have the following three items.
\begin{itemize}
  \item[(a)] For the case of $\simp=\varnothing$, we have $\LL(A_n)-1=n-1\in\ospec(A_n)$.
  \item[(b)] For the case of $\simp=\{S(1)\}$, we have $\LL^{t_{\simp}}(A_n) = \sup \{P(i) \mid 1\leqslant i\leqslant n\} = n-1$.
      Then we obtain $\LL^{t_{\simp}}(A_n) -1 = n-2 \in \ospec(A_n)$ by Theorem \ref{thm:main 2}.
  \item[(c)] For the case of $\simp=\{S(1),\ldots,S(j)\}$ ($1\leqslant j\leqslant n-1$), we have $\LL^{t_{\simp}}(A_n) = \sup\{P(i) \mid 1\leqslant i\leqslant n\}=n-j$.
      Then we obtain $\LL^{t_{\simp}}(A_n)-1=(n-j)-1 = n-j-1 \in \ospec(A_n)$.
\end{itemize}
The above three items yield $\{0,1,2,\cdots,n-1\} \subseteq \ospec(A_n)$.
\end{proof}

Next, we define some notation to describe the Auslander-Reiten quiver $\Gamma(A_n)$ of $A_n$.
Recall that a path $\wp_{[i,j]}=a_ia_{i+1}\cdots a_j$ ($j\geqslant i$) on $\dA_n$ corresponds to an indecomposable representation $M_{[i,j]} = ((M_{[i,j]})_k, (M_{[i,j]})_{\alpha})_{k\in(\dA_n)_0, \alpha\in(\dA_n)_1}$ satisfying
\[ (M_{[i,j]})_k =
  \begin{cases}
    \kk, & \text{ if } i\leqslant k \leqslant j+1; \\
    0,   & \text{ otherwise},
  \end{cases}
\text{\ and\ }
(M_{[i,j]})_{\alpha} =
  \begin{cases}
    1_{\kk}, & \text{ if } \alpha\in\{a_i,a_{i+1},\cdots,a_{j}\}; \\
    0, & \text{ otherwise}.
  \end{cases}
\]
Then each irreducible morphisms in $\mod A_n$ is one of the following form:
\begin{itemize}
  \item the irreducible monomorphism $f^{+}_{[i,j]}: M_{[i,j]} \to M_{[i-1,j]}$ ($2\leqslant i\leqslant n$),
  \item the irreducible epimorphism $f^{-}_{[i,j]}: M_{[i,j]} \to M_{[i,j-1]}$ ($2\leqslant j\leqslant n$);
\end{itemize}
and in this case, $\Gamma(A_n)=(\Gamma(A_n)_0, \Gamma(A_n)_1, \s, \t)$, where
\begin{itemize}
  \item $\Gamma(A_n)_0 = \{M_{[i,j]} \mid 1\leqslant i\leqslant j\leqslant n\}$;
  \item $\Gamma(A_n)_1 = \{ f^{+}_{[i,j]} \mid 2\leqslant i\leqslant n, 1\leqslant j\leqslant n\} \cup \{f^{-}_{[i,j]} \mid 1\leqslant i\leqslant n, 2\leqslant j\leqslant n\}$;
  \item $\s$ and $\t$ respectively send any arrow $f^{\pm}_{[-,-]}$ to its starting and ending points, see \Pic\ \ref{fig:linaer-An}
\end{itemize}

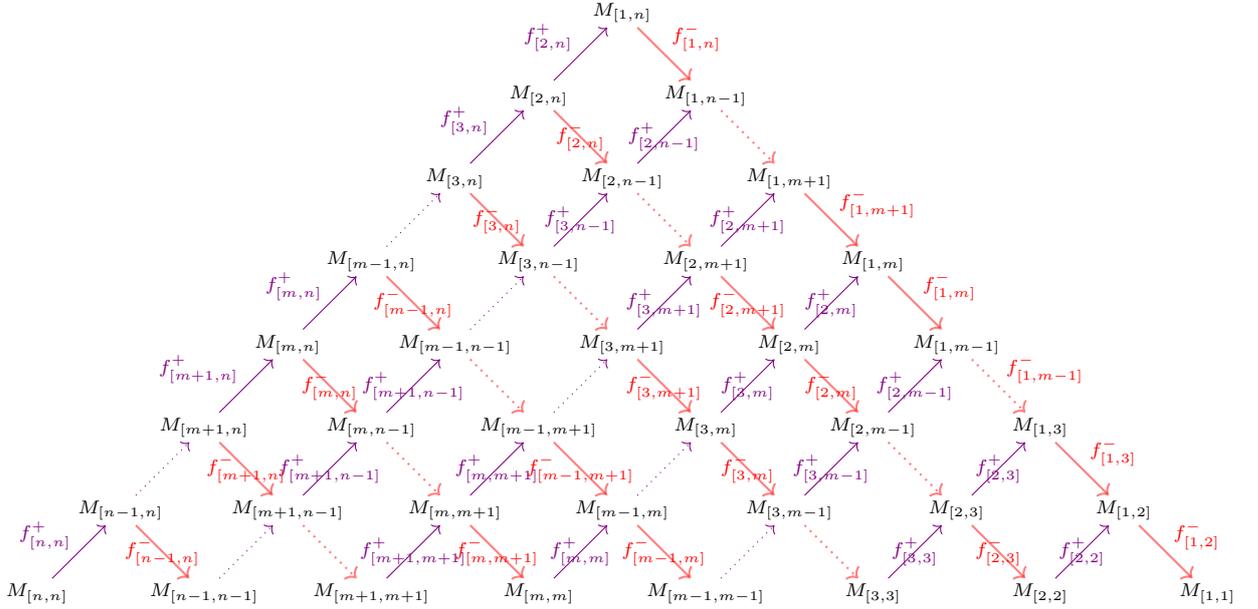
\begin{figure}[htbp]
\tiny
\hspace{-0.8cm}
\begin{tikzpicture}[scale=1.1]
\draw (-2+0,-1+0) node{$M_{[n  , n  ]}$}
      (-2+1,-1+1) node{$M_{[n-1, n  ]}$}
      (-2+2,-1+2) node{$M_{[m+1, n  ]}$}
      (-2+3,-1+3) node{$M_{[m  , n  ]}$}
      (-2+4,-1+4) node{$M_{[m-1, n  ]}$}
      (-2+5,-1+5) node{$M_{[3  , n  ]}$}
      (-2+6,-1+6) node{$M_{[2  , n  ]}$}
      (-2+7,-1+7) node{$M_{[1  , n  ]}$};
\draw ( 0+0,-1+0) node{$M_{[n-1, n-1]}$}
      ( 0+1,-1+1) node{$M_{[m+1, n-1]}$}
      ( 0+2,-1+2) node{$M_{[m  , n-1]}$}
      ( 0+3,-1+3) node{$M_{[m-1, n-1]}$}
      ( 0+4,-1+4) node{$M_{[3  , n-1]}$}
      ( 0+5,-1+5) node{$M_{[2  , n-1]}$}
      ( 0+6,-1+6) node{$M_{[1  , n-1]}$};
\draw ( 2+0,-1+0) node{$M_{[m+1, m+1]}$}
      ( 2+1,-1+1) node{$M_{[m  , m+1]}$}
      ( 2+2,-1+2) node{$M_{[m-1, m+1]}$}
      ( 2+3,-1+3) node{$M_{[3  , m+1]}$}
      ( 2+4,-1+4) node{$M_{[2  , m+1]}$}
      ( 2+5,-1+5) node{$M_{[1  , m+1]}$};
\draw ( 4+0,-1+0) node{$M_{[m  , m  ]}$}
      ( 4+1,-1+1) node{$M_{[m-1, m  ]}$}
      ( 4+2,-1+2) node{$M_{[3  , m  ]}$}
      ( 4+3,-1+3) node{$M_{[2  , m  ]}$}
      ( 4+4,-1+4) node{$M_{[1  , m  ]}$};
\draw ( 6+0,-1+0) node{$M_{[m-1, m-1]}$}
      ( 6+1,-1+1) node{$M_{[3  , m-1]}$}
      ( 6+2,-1+2) node{$M_{[2  , m-1]}$}
      ( 6+3,-1+3) node{$M_{[1  , m-1]}$};
\draw ( 8+0,-1+0) node{$M_{[3  , 3  ]}$}
      ( 8+1,-1+1) node{$M_{[2  , 3  ]}$}
      ( 8+2,-1+2) node{$M_{[1  , 3  ]}$};
\draw (10+0,-1+0) node{$M_{[2  , 2  ]}$}
      (10+1,-1+1) node{$M_{[1  , 2  ]}$};
\draw (12+0,-1+0) node{$M_{[1  , 1  ]}$};
\draw[violet] (-1.5+0,-0.3+0) node[left]{$f^+_{[n  , n  ]}$}
              (-1.5+2,-0.3+2) node[left]{$f^+_{[m+1, n  ]}$}
              (-1.5+3,-0.3+3) node[left]{$f^+_{[m  , n  ]}$}
              (-1.5+5,-0.3+5) node[left]{$f^+_{[3  , n  ]}$}
              (-1.5+6,-0.3+6) node[left]{$f^+_{[2  , n  ]}$};
\draw[violet] ( 0.5+1,-0.5+1) node{$f^+_{[m+1, n-1]}$}
              ( 0.5+2,-0.5+2) node{$f^+_{[m+1, n-1]}$}
              ( 0.5+4,-0.5+4) node{$f^+_{[3  , n-1]}$}
              ( 0.5+5,-0.5+5) node{$f^+_{[2  , n-1]}$};
\draw[violet] ( 2.5+0,-0.5+0) node{$f^+_{[m+1, m+1]}$}
              ( 2.5+1,-0.5+1) node{$f^+_{[m  , m+1]}$}
              ( 2.5+3,-0.5+3) node{$f^+_{[3  , m+1]}$}
              ( 2.5+4,-0.5+4) node{$f^+_{[2  , m+1]}$};
\draw[violet] ( 4.5+0,-0.5+0) node{$f^+_{[m  , m  ]}$}
              ( 4.5+2,-0.5+2) node{$f^+_{[3  , m  ]}$}
              ( 4.5+3,-0.5+3) node{$f^+_{[2  , m  ]}$};
\draw[violet] ( 6.5+1,-0.5+1) node{$f^+_{[3  , m-1]}$}
              ( 6.5+2,-0.5+2) node{$f^+_{[2  , m-1]}$};
\draw[violet] ( 8.5+0,-0.5+0) node{$f^+_{[3  , 3  ]}$}
              ( 8.5+1,-0.5+1) node{$f^+_{[2  , 3  ]}$};
\draw[violet] (10.5+0,-0.5+0) node{$f^+_{[2  , 2  ]}$};
\draw[   red] (11.5-0,-0.3+0) node[right]{$f^-_{[1  , 2  ]}$}
              (11.5-1,-0.3+1) node[right]{$f^-_{[1  , 3  ]}$}
              (11.5-2,-0.3+2) node[right]{$f^-_{[1  , m-1]}$}
              (11.5-3,-0.3+3) node[right]{$f^-_{[1  , m  ]}$}
              (11.5-4,-0.3+4) node[right]{$f^-_{[1  , m+1]}$}
              (11.5-6,-0.3+6) node[right]{$f^-_{[1  , n  ]}$};
\draw[   red] ( 9.5-0,-0.5+0) node{$f^-_{[2  , 3  ]}$}
              ( 9.5-2,-0.5+2) node{$f^-_{[2  , m  ]}$}
              ( 9.5-3,-0.5+3) node{$f^-_{[2  , m+1]}$}
              ( 9.5-5,-0.5+5) node{$f^-_{[2  , n  ]}$};
\draw[   red] ( 7.5-1,-0.5+1) node{$f^-_{[3  , m  ]}$}
              ( 7.5-2,-0.5+2) node{$f^-_{[3  , m+1]}$}
              ( 7.5-4,-0.5+4) node{$f^-_{[3  , n  ]}$};
\draw[   red] ( 5.5-0,-0.5+0) node{$f^-_{[m-1, m  ]}$}
              ( 5.5-1,-0.5+1) node{$f^-_{[m-1, m+1]}$}
              ( 5.5-3,-0.5+3) node{$f^-_{[m-1, n  ]}$};
\draw[   red] ( 3.5-0,-0.5+0) node{$f^-_{[m  , m+1]}$}
              ( 3.5-2,-0.5+2) node{$f^-_{[m  , n  ]}$};
\draw[   red] ( 1.5-1,-0.5+1) node{$f^-_{[m+1, n  ]}$};
\draw[   red] (-0.5-0,-0.5+0) node{$f^-_{[n-1, n  ]}$};
%
\draw[shift={(-1,-1)}][violet][->] (-1+0.18, 0+0.18)--( 0-0.18, 1-0.18);
\draw[shift={( 0, 0)}][violet][->] (-1+0.18, 0+0.18)--( 0-0.18, 1-0.18)[dotted];
\draw[shift={( 1, 1)}][violet][->] (-1+0.18, 0+0.18)--( 0-0.18, 1-0.18);
\draw[shift={( 2, 2)}][violet][->] (-1+0.18, 0+0.18)--( 0-0.18, 1-0.18);
\draw[shift={( 3, 3)}][violet][->] (-1+0.18, 0+0.18)--( 0-0.18, 1-0.18)[dotted];
\draw[shift={( 4, 4)}][violet][->] (-1+0.18, 0+0.18)--( 0-0.18, 1-0.18);
\draw[shift={( 5, 5)}][violet][->] (-1+0.18, 0+0.18)--( 0-0.18, 1-0.18);
\draw[shift={(-1+2,-1)}][violet][->] (-1+0.18, 0+0.18)--( 0-0.18, 1-0.18)[dotted];
\draw[shift={( 0+2, 0)}][violet][->] (-1+0.18, 0+0.18)--( 0-0.18, 1-0.18);
\draw[shift={( 1+2, 1)}][violet][->] (-1+0.18, 0+0.18)--( 0-0.18, 1-0.18);
\draw[shift={( 2+2, 2)}][violet][->] (-1+0.18, 0+0.18)--( 0-0.18, 1-0.18)[dotted];
\draw[shift={( 3+2, 3)}][violet][->] (-1+0.18, 0+0.18)--( 0-0.18, 1-0.18);
\draw[shift={( 4+2, 4)}][violet][->] (-1+0.18, 0+0.18)--( 0-0.18, 1-0.18);
\draw[shift={(-1+4,-1)}][violet][->] (-1+0.18, 0+0.18)--( 0-0.18, 1-0.18);
\draw[shift={( 0+4, 0)}][violet][->] (-1+0.18, 0+0.18)--( 0-0.18, 1-0.18);
\draw[shift={( 1+4, 1)}][violet][->] (-1+0.18, 0+0.18)--( 0-0.18, 1-0.18)[dotted];
\draw[shift={( 2+4, 2)}][violet][->] (-1+0.18, 0+0.18)--( 0-0.18, 1-0.18);
\draw[shift={( 3+4, 3)}][violet][->] (-1+0.18, 0+0.18)--( 0-0.18, 1-0.18);
\draw[shift={(-1+6,-1)}][violet][->] (-1+0.18, 0+0.18)--( 0-0.18, 1-0.18);
\draw[shift={( 0+6, 0)}][violet][->] (-1+0.18, 0+0.18)--( 0-0.18, 1-0.18)[dotted];
\draw[shift={( 1+6, 1)}][violet][->] (-1+0.18, 0+0.18)--( 0-0.18, 1-0.18);
\draw[shift={( 2+6, 2)}][violet][->] (-1+0.18, 0+0.18)--( 0-0.18, 1-0.18);
\draw[shift={(-1+8,-1)}][violet][->] (-1+0.18, 0+0.18)--( 0-0.18, 1-0.18)[dotted];
\draw[shift={( 0+8, 0)}][violet][->] (-1+0.18, 0+0.18)--( 0-0.18, 1-0.18);
\draw[shift={( 1+8, 1)}][violet][->] (-1+0.18, 0+0.18)--( 0-0.18, 1-0.18);
\draw[shift={(-1+10,-1)}][violet][->] (-1+0.18, 0+0.18)--( 0-0.18, 1-0.18);
\draw[shift={( 0+10, 0)}][violet][->] (-1+0.18, 0+0.18)--( 0-0.18, 1-0.18);
\draw[shift={(-1+12,-1)}][violet][->] (-1+0.18, 0+0.18)--( 0-0.18, 1-0.18);
\draw[shift={(12-0, 0+0)}][red][opacity=0.5][line width=0.8pt][->] (-1+0.18, 0-0.18)--( 0-0.18,-1+0.18);
\draw[shift={(12-1, 0+1)}][red][opacity=0.5][line width=0.8pt][->] (-1+0.18, 0-0.18)--( 0-0.18,-1+0.18);
\draw[shift={(12-2, 0+2)}][red][opacity=0.5][line width=0.8pt][->] (-1+0.18, 0-0.18)--( 0-0.18,-1+0.18)[dotted];
\draw[shift={(12-3, 0+3)}][red][opacity=0.5][line width=0.8pt][->] (-1+0.18, 0-0.18)--( 0-0.18,-1+0.18);
\draw[shift={(12-4, 0+4)}][red][opacity=0.5][line width=0.8pt][->] (-1+0.18, 0-0.18)--( 0-0.18,-1+0.18);
\draw[shift={(12-5, 0+5)}][red][opacity=0.5][line width=0.8pt][->] (-1+0.18, 0-0.18)--( 0-0.18,-1+0.18)[dotted];
\draw[shift={(12-6, 0+6)}][red][opacity=0.5][line width=0.8pt][->] (-1+0.18, 0-0.18)--( 0-0.18,-1+0.18);
\draw[shift={(10-0, 0+0)}][red][opacity=0.5][line width=0.8pt][->] (-1+0.18, 0-0.18)--( 0-0.18,-1+0.18);
\draw[shift={(10-1, 0+1)}][red][opacity=0.5][line width=0.8pt][->] (-1+0.18, 0-0.18)--( 0-0.18,-1+0.18)[dotted];
\draw[shift={(10-2, 0+2)}][red][opacity=0.5][line width=0.8pt][->] (-1+0.18, 0-0.18)--( 0-0.18,-1+0.18);
\draw[shift={(10-3, 0+3)}][red][opacity=0.5][line width=0.8pt][->] (-1+0.18, 0-0.18)--( 0-0.18,-1+0.18);
\draw[shift={(10-4, 0+4)}][red][opacity=0.5][line width=0.8pt][->] (-1+0.18, 0-0.18)--( 0-0.18,-1+0.18)[dotted];
\draw[shift={(10-5, 0+5)}][red][opacity=0.5][line width=0.8pt][->] (-1+0.18, 0-0.18)--( 0-0.18,-1+0.18);
\draw[shift={(8-0, 0+0)}][red][opacity=0.5][line width=0.8pt][->] (-1+0.18, 0-0.18)--( 0-0.18,-1+0.18)[dotted];
\draw[shift={(8-1, 0+1)}][red][opacity=0.5][line width=0.8pt][->] (-1+0.18, 0-0.18)--( 0-0.18,-1+0.18);
\draw[shift={(8-2, 0+2)}][red][opacity=0.5][line width=0.8pt][->] (-1+0.18, 0-0.18)--( 0-0.18,-1+0.18);
\draw[shift={(8-3, 0+3)}][red][opacity=0.5][line width=0.8pt][->] (-1+0.18, 0-0.18)--( 0-0.18,-1+0.18)[dotted];
\draw[shift={(8-4, 0+4)}][red][opacity=0.5][line width=0.8pt][->] (-1+0.18, 0-0.18)--( 0-0.18,-1+0.18);
\draw[shift={(6-0, 0+0)}][red][opacity=0.5][line width=0.8pt][->] (-1+0.18, 0-0.18)--( 0-0.18,-1+0.18);
\draw[shift={(6-1, 0+1)}][red][opacity=0.5][line width=0.8pt][->] (-1+0.18, 0-0.18)--( 0-0.18,-1+0.18);
\draw[shift={(6-2, 0+2)}][red][opacity=0.5][line width=0.8pt][->] (-1+0.18, 0-0.18)--( 0-0.18,-1+0.18)[dotted];
\draw[shift={(6-3, 0+3)}][red][opacity=0.5][line width=0.8pt][->] (-1+0.18, 0-0.18)--( 0-0.18,-1+0.18);
\draw[shift={(4-0, 0+0)}][red][opacity=0.5][line width=0.8pt][->] (-1+0.18, 0-0.18)--( 0-0.18,-1+0.18);
\draw[shift={(4-1, 0+1)}][red][opacity=0.5][line width=0.8pt][->] (-1+0.18, 0-0.18)--( 0-0.18,-1+0.18)[dotted];
\draw[shift={(4-2, 0+2)}][red][opacity=0.5][line width=0.8pt][->] (-1+0.18, 0-0.18)--( 0-0.18,-1+0.18);
\draw[shift={(2-0, 0+0)}][red][opacity=0.5][line width=0.8pt][->] (-1+0.18, 0-0.18)--( 0-0.18,-1+0.18)[dotted];
\draw[shift={(2-1, 0+1)}][red][opacity=0.5][line width=0.8pt][->] (-1+0.18, 0-0.18)--( 0-0.18,-1+0.18);
\draw[shift={(0-0, 0+0)}][red][opacity=0.5][line width=0.8pt][->] (-1+0.18, 0-0.18)--( 0-0.18,-1+0.18);
\end{tikzpicture}
  \caption{The Auslander-Reiten quiver of $A_n$}
  \label{fig:linaer-An}
\end{figure}

For any $1\leqslant m < n$, let
\[T_m = \Bigg(\bigoplus_{1\leqslant j\leqslant m}M_{[1,j]}\Bigg)
        \bigoplus  \Bigg(\bigoplus_{i\in(\dA_n)_0}M_{[i,i]}\Bigg). \]
The computing is divided to two steps as follows:

Step 1: describe all irreducible $T_m$-coghosts (see Proposition \ref{prop:Tm-cog});

Step 2: show that $\sup\ospec(A_n)=n-1$.

Thus, we can obtain the Orlov spectrum of $A_n$ is $\ospec(A_n)=\{1,2,\ldots,n-1\}$ by using Proposition \ref{prop-An}.

\begin{lemma} \label{lemm:Tm-cog1}
All $f^+_{[i,n]}: M_{[i,n]}\to M_{[i-1,n]}$ {\rm(}$m+1\leqslant i\leqslant n${\rm)} are $T_m$-coghosts.
\end{lemma}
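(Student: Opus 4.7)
The plan is to invoke the Hom-vanishing criterion of Proposition~\ref{lemm:cog-criterion}: to show that $f^+_{[i,n]}$ is a $T_m$-coghost it suffices to verify that for every indecomposable direct summand $N$ of $T_m$, at least one of the two Hom-spaces $\Hom_{A_n}(M_{[i,n]},N)$ and $\Hom_{A_n}(M_{[i-1,n]},N)$ vanishes. The indecomposable summands of $T_m$ split naturally into two families, namely the interval modules $M_{[1,j]}$ with $1\leqslant j\leqslant m$, and the simples $M_{[k,k]}$ with $1\leqslant k\leqslant n$, so the verification reduces to two cases.

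First I would record the standard Hom-formula for indecomposable modules over the linearly oriented quiver $\dA_n$: a nonzero morphism $M_{[a,b]}\to M_{[c,d]}$ exists if and only if $c\leqslant a\leqslant d\leqslant b$. This is a routine check on representations, since any such morphism factors as the canonical projection onto the common quotient $M_{[a,d]}$ followed by the canonical inclusion into $M_{[c,d]}$; it can also be read off from the shape of the AR quiver already drawn in Figure~\ref{fig:linaer-An}.

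Next I would run the two cases. For $N=M_{[1,j]}$ with $j\leqslant m$, the Hom-formula applied to $(a,b)=(i,n)$ and $(c,d)=(1,j)$ would require $i\leqslant j$; but the hypothesis $i\geqslant m+1>j$ rules this out, so $\Hom_{A_n}(M_{[i,n]},M_{[1,j]})=0$. For $N=M_{[k,k]}$ simple, the Hom-formula forces $k=i$ whenever $\Hom_{A_n}(M_{[i,n]},M_{[k,k]})\ne 0$ and $k=i-1$ whenever $\Hom_{A_n}(M_{[i-1,n]},M_{[k,k]})\ne 0$; since $i\ne i-1$, at most one of these two conditions can hold, so at least one of the two Hom-spaces vanishes.

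Combining these two sub-cases with Proposition~\ref{lemm:cog-criterion} yields that $f^+_{[i,n]}$ is a $T_m$-coghost, as claimed. I do not anticipate a real obstacle here; the only points of care are the strict inequality $i>m\geqslant j$ in the first sub-case, and the observation that $M_{[i,n]}$ and $M_{[i-1,n]}$ cannot both admit a nonzero map into the same simple because they have distinct tops $S(i)$ and $S(i-1)$.
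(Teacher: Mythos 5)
Your proof is correct and follows the same basic route as the paper, namely reducing to the Hom-vanishing criterion of Proposition~\ref{lemm:cog-criterion}. Two small differences are worth noting. First, where the paper verifies the vanishing condition separately for the simples $M_{[j,j]}$ (via an explicit diagram chase on representations) and then combines summands via Lemma~\ref{lemm:cog-directsum}, you package the computation once through the standard interval-Hom formula $\Hom_{A_n}(M_{[a,b]},M_{[c,d]})\neq 0$ iff $c\leqslant a\leqslant d\leqslant b$ and apply the criterion to all indecomposable summands of $T_m$ in one pass; this is tidier and equivalent. Second, and more substantively, in the case $N=M_{[1,j]}$ the paper argues that $\Hom_{A_n}(M_{[i-1,n]},M_{[1,j]})=0$ because $i>j$, but this is actually false at the boundary $i=m+1$, $j=m$: there $\Hom_{A_n}(M_{[m,n]},M_{[1,m]})\cong\kk$ (the formula requires only $1\leqslant i-1\leqslant j$, which holds when $i-1=j=m$). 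The paper's conclusion is nevertheless saved because the composite lands in $\Hom_{A_n}(M_{[i,n]},M_{[1,j]})$, which does vanish. You check exactly this vanishing $\Hom_{A_n}(M_{[i,n]},M_{[1,j]})=0$ (valid for all $i>m\geqslant j$), which is the correct side of the ``either--or'' in Proposition~\ref{lemm:cog-criterion} to use, so your argument is airtight where the paper's justification has a small slip.
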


\begin{proof}
First of all, for any $M_{[j,j]}$, we have $M_{[j,j]}$ is isomorphic to the simple module $S(j)$ corresponded by $j\in(\dA_n)_0$,
and so the following statements hold.
\begin{itemize}
  \item[(1)] If $j\leqslant i-1$ then $\Hom_{A_n}(M_{[i,n]}, M_{[j,j]})=0$.
  \item[(2)] If $j\geqslant i$, then $\Hom_{A_n}(M_{[i-1,n]}, M_{[j,j]})=0$.
\end{itemize}
The proof of (1) is trivial. Next we prove (2).
For any $1\leqslant p < q\leqslant n$ and $p< r\leqslant q$,
the morphism $h=(h_v)_{v\in(\dA_n)_0} \in \rep(\dA_n)$ from
the representation $((M_{[p,q]})_v, (M_{[p,q]})_{\alpha})_{v\in(\dA_n)_0, \alpha\in(\dA_n)_1}$ of $M_{[p,q]}$
to the simple representation of $S(r)$ is a family of
$\kk$-linear maps such that the following diagram commutes.
\[\xymatrix{
\cdots \ar[r] & (M_{[p,q]})_{r-1} \ar[r]^{d_{r-1}} \ar[d]
              & (M_{[p,q]})_r     \ar[r]^{d_r} \ar[d]_{h_r}
              & (M_{[p,q]})_{r+1} \ar[r]^{d_{r+1}} \ar[d] & \cdots \\
\cdots \ar[r] & 0 \ar[r] & S(j)e_r \ar[r] & 0 \ar[r] & \cdots
}\]
($e_r$ is the path of length zero corresponded by the vertex $r$ of $(\dA_n)_0$).
Then $r-1\geqslant p$, and so $(M_{[p,q]})_{r-1}=\kk$. Since $p\ne q$, we obtain $(M_{[p,q]})_r=\kk$, and so $d_{r-1} = 1_{\kk}$.
By the commutativity of the above diagram, we obtain $h_r=0$.
Take $p=i-1$, $q=n$ and $r=j\geqslant i$. Then any $h\in\Hom_{A_n}(M_{[i-1,n]}, M_{[j,j]})$ is zero holds for all $p=i-1< r=j \leqslant q=n$.
Thus, the statement (2) holds.
Then, by Lemma \ref{lemm:cog-criterion}, every $f^+_{[i,n]}$ ($m+1\leqslant i\leqslant n$) is an $M_{[j,j]}$-coghost ($j\in(\dA_n)_0$).
Furthermore, we obtain that it is also a $\big(\bigoplus_{j\in(\dA_n)_0}M_{[j,j]}\big)$-coghost
by Lemma \ref{lemm:cog-directsum}.

Next, we prove that every $f^+_{[i,n]}$ ($m+1\leqslant i\leqslant n$) is a
$\big(\bigoplus_{1\leqslant j\leqslant n}M_{[1,j]}\big)$-coghost ($1\leqslant j\leqslant m$).
For any $g\in\Hom_{A_n}(M_{[i-1,n]}, M_{[1,j]})$, note that $i>j$, then it is obvious that $g$ is zero.
In this case, $gf^+_{[i,n]}=0$, then $f^+_{[i,n]}$ is an $M_{[1,j]}$-coghost by Lemma \ref{lemm:cog-criterion}. Thus, $f^+_{[i,n]}$ is also a
$\big(\bigoplus_{1\leqslant j\leqslant n}M_{[1,j]}\big)$-coghost
by Lemma \ref{lemm:cog-directsum}.

Finally, using Lemma \ref{lemm:cog-directsum} again, we have $f^+_{[i,n]}$ is a $T_m$-coghost.
\end{proof}

\begin{lemma}\label{lemm:Tm-cog2}
If $f^+_{[r,s]}$ $(s\geqslant 1)$ is a $T_m$-coghost, then so is $f^+_{[r,s-1]}$.
\end{lemma}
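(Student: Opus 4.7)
The plan is to exploit the Auslander–Reiten mesh relation on $\mod A_n$ centered at the non-projective module $M_{[r-1,s-1]}$, which can be read off directly from Figure \ref{fig:linaer-An}. First I would write down the almost-split sequence ending at $M_{[r-1,s-1]}$, namely
\[ 0 \longrightarrow M_{[r,s]} \longrightarrow M_{[r-1,s]} \oplus M_{[r,s-1]} \longrightarrow M_{[r-1,s-1]} \longrightarrow 0, \]
whose commutativity (the mesh relation) yields
\[ f^{+}_{[r,s-1]} \circ f^{-}_{[r,s]} \;=\; f^{-}_{[r-1,s]} \circ f^{+}_{[r,s]}. \]

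The main step is then a short diagram chase. Given any $N\in\add T_m$ and any morphism $h: M_{[r-1,s-1]}\to N$, I would pre-compose the mesh relation with $h$ to obtain
\[ h \circ f^{+}_{[r,s-1]} \circ f^{-}_{[r,s]} \;=\; (h \circ f^{-}_{[r-1,s]}) \circ f^{+}_{[r,s]}. \]
Since $h \circ f^{-}_{[r-1,s]}$ is a morphism from $M_{[r-1,s]}$ into $N\in\add T_m$, and $f^{+}_{[r,s]}$ is a $T_m$-coghost by hypothesis, the right-hand side vanishes. Because $f^{-}_{[r,s]}: M_{[r,s]}\to M_{[r,s-1]}$ is an (irreducible) epimorphism, I may cancel it to conclude $h \circ f^{+}_{[r,s-1]} = 0$. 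As this holds for every $h$ and every $N\in\add T_m$, the morphism $f^{+}_{[r,s-1]}$ is a $T_m$-coghost, which is exactly what is to be shown.

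The hard part, such as it is, will be handling the index bookkeeping and boundary cases: one must check that all four modules $M_{[r,s]}, M_{[r-1,s]}, M_{[r,s-1]}, M_{[r-1,s-1]}$ really exist (forcing $r\geqslant 2$ and $s\geqslant r+1$, which is guaranteed by the very existence of $f^{+}_{[r,s-1]}$), and that the almost-split sequence still has the stated four-term shape when $M_{[r-1,s-1]}$ sits near the boundary of the AR quiver, in particular when $s=n$ (so $M_{[r,s]}$ is projective) or when $r-1=1$ (so $M_{[r-1,s-1]}$ is injective); in all these cases the mesh ending at $M_{[r-1,s-1]}$ keeps the same shape, so the argument above applies uniformly.
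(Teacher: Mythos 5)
Your proof is correct and follows essentially the same route as the paper: both pre-compose an arbitrary morphism $h\colon M_{[r-1,s-1]}\to T'$ (with $T'\in\add T_m$) with the mesh relation in the AR quiver, use the coghost hypothesis to kill $(h\circ f^{-}_{[r-1,s]})\circ f^{+}_{[r,s]}$, and then cancel the epimorphism $f^{-}_{[r,s]}$. The only cosmetic difference is that the paper writes the mesh as anti-commutative, $f^{-}_{[r-1,s]}\circ f^{+}_{[r,s]}=-f^{+}_{[r,s-1]}\circ f^{-}_{[r,s]}$, while you drop the sign; this is immaterial since both sides vanish, and your extra attention to the almost-split sequence and boundary cases is fine but not needed beyond what the paper records.
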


\begin{proof}
For any morphism $h: M_{[r-1,s-1]}\to T'$ with $T'\in\add T_m$, we have
\[(h\circ f^-_{[r-1,s]}) \circ f^+_{[r,s]}=0\]
since $h\circ f^-_{[r-1,s]}$ is a morphism in $\Hom_{A_n}(M_{[r-1,s]},T')$ and $f^+_{[r,s]}$ is a $T_m$-coghost.
\begin{figure}[htbp]
\centering
\begin{tikzpicture}[xscale=1.8,yscale=1.2]
\draw[->] (-1+0.18, 0+0.18)--( 0-0.18, 1-0.18); \draw (-0.5, 0.5) node[above  left]{$f^+_{[r,s]}$};
\draw[->] ( 0+0.18,-1+0.18)--( 1-0.18, 0-0.18); \draw ( 0.5,-0.5) node[below right]{$f^+_{[r,s-1]}$};
\draw[->] ( 0+0.18, 1-0.18)--( 1-0.18, 0+0.18); \draw ( 0.5, 0.5) node[above right]{$f^-_{[r-1,s]}$};
\draw[->] (-1+0.18, 0-0.18)--( 0-0.18,-1+0.18); \draw (-0.5,-0.5) node[below  left]{$f^-_{[r,s]}$};
\draw (-1,0) node{$M_{[r,s]}$} (0,1) node{$M_{[r-1,s]}$} (0,-1) node{$M_{[r,s-1]}$} (1,0) node{$M_{[r-1,s-1]}$};
\end{tikzpicture}
\caption{Anti-commutative block in $\Gamma(\mod A_n)$}
\label{fig:Tm-cog2}
\end{figure}
Then $0 = h\circ f^-_{[r-1,s]}\circ f^+_{[r,s]} = -h\circ f^+_{[r,s-1]}\circ f^-_{[r,s]}$ (see \Pic\ \ref{fig:Tm-cog2}).
Since $f^-_{[r,s]}$ is an epimorphism, we obtain $hf^+_{[r,s-1]}=0$.
\end{proof}

The following result is given by Lemmas \ref{lemm:Tm-cog1} and \ref{lemm:Tm-cog2}, which provide all irreducible $T_m$-coghosts.

\begin{proposition} \label{prop:Tm-cog}
The morphisms $f^+_{[i,n-t]}$ {\rm(}$m+1\leqslant i\leqslant n$, $0\leqslant t<n${\rm)} constitute all irreducible $T_m$-coghosts in $\mod A_n$.
\end{proposition}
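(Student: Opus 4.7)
My plan is to split the proof into two directions. The first direction, that every listed $f^+_{[i, n-t]}$ (with the implicit constraint $m+1 \leqslant i \leqslant n-t$) is a $T_m$-coghost, is an immediate consequence of the preceding two lemmas: Lemma \ref{lemm:Tm-cog1} handles the base case $t=0$, and Lemma \ref{lemm:Tm-cog2} lets me decrement the second index one step at a time, so $t$ iterations yield $f^+_{[i, n-t]}$ for each admissible $t$.

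The opposite direction, that no other irreducible morphism of $\mod A_n$ is a $T_m$-coghost, proceeds by case analysis on the two remaining families of irreducibles, namely the epimorphisms $f^-_{[i,j]}$ and the monomorphisms $f^+_{[i,j]}$ with $2 \leqslant i \leqslant m$. For each such $f$ the plan is to produce an explicit morphism $h:\mathrm{cod}(f) \to T'$ with $T' \in \add T_m$ such that $h \circ f \ne 0$, which by definition rules out $f$ being a $T_m$-coghost.

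For $f^-_{[i,j]}: M_{[i,j]} \twoheadrightarrow M_{[i,j-1]}$, I will take $T' = M_{[i,i]} = S(i) \in \add T_m$ and let $h: M_{[i,j-1]} \twoheadrightarrow S(i)$ be the canonical top quotient; the composite $h \circ f^-_{[i,j]}$ then coincides with the top quotient $M_{[i,j]} \twoheadrightarrow S(i)$, which is nonzero. For $f^+_{[i,j]}: M_{[i,j]} \hookrightarrow M_{[i-1,j]}$ with $2 \leqslant i \leqslant m$, I will take $T' = M_{[1,i]} \in \add T_m$ (available because $i \leqslant m$) and define $h$ as the composite $M_{[i-1,j]} \twoheadrightarrow M_{[i-1,i]} \hookrightarrow M_{[1,i]}$, namely the quotient onto the length-two uniserial module $M_{[i-1,i]}$ followed by its inclusion as a submodule of $M_{[1,i]}$. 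Chasing the submodule $M_{[i,j]} \subset M_{[i-1,j]}$ through this composite lands in $\soc M_{[1,i]} = S(i)$, which is nonzero, so $h \circ f^+_{[i,j]} \ne 0$.

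The main technical point is the bookkeeping on uniserial composition factors needed to confirm that each chosen $h$ genuinely produces a nonzero composite. This uses only the standard description of the indecomposable $A_n$-modules together with the direct definition of the $T_m$-coghost condition; no new homological machinery beyond the Ghost/Coghost framework and the two preceding lemmas is required.
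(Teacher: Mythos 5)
Your proposal is correct, and for the harder containment (no other irreducible is a $T_m$-coghost) it takes a route that is genuinely different from the paper's. You treat the two remaining families in parallel and, for each $f^+_{[i,j]}$ with $2\leqslant i\leqslant m$, you write down a single test map
\[
h : M_{[i-1,j]} \twoheadrightarrow M_{[i-1,i]} \hookrightarrow M_{[1,i]}\ \in \add T_m,
\]
and observe that $h\circ f^+_{[i,j]}$ carries $M_{[i,j]}$ onto $\soc M_{[1,i]}=S(i)\neq 0$, so $f^+_{[i,j]}$ is not a coghost for any $j$. The paper instead handles this family in two steps: it first rules out only the morphisms $f^+_{[i,j]}$ with $i\leqslant j\leqslant m$ by composing $f^+_{[i,j]}$ with the chain of monomorphisms $M_{[i-1,j]}\hookrightarrow M_{[1,j]}$ (using $M_{[1,j]}\in\add T_m$, which requires $j\leqslant m$), and then invokes the contrapositive of Lemma~\ref{lemm:Tm-cog2} to push ``not a coghost'' upward in $j$ through the AR-quiver. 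Your $h$ works uniformly in $j$ by aiming at $M_{[1,i]}$ instead of $M_{[1,j]}$, so no propagation via the anti-commutative AR square is needed in this direction; the paper's version localizes the test maps to plain inclusions, at the cost of the extra induction. The first direction (via Lemmas~\ref{lemm:Tm-cog1} and~\ref{lemm:Tm-cog2}) and the exclusion of the epimorphisms $f^-_{[i,j]}$ via the top quotient $M_{[i,j-1]}\twoheadrightarrow S(i)$ coincide with what the paper does. You should make explicit, as the paper's statement implicitly does, that the range in the proposition is to be read with $m+1\leqslant i\leqslant n-t$, i.e.\ you are describing exactly the $f^+_{[i,j]}$ with $m+1\leqslant i\leqslant j\leqslant n$.
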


\begin{proof}
Let $\mathfrak{F}=\{f^+_{[i,n-t]} \mid m+1\leqslant i\leqslant n, 0\leqslant t<n\}$.
We show that a homomorphism $f$ is an irreducible $T_m$-coghosts if and only if $f\in \mathfrak{F}$.

First of all, for any $1\leqslant t< j$ $(\leqslant m)$, $f^+_{[t+1,j]}: M_{[t+1,j]}\to M_{[t,j]}$
(see the irreducible morphisms $f^+_{[-,-]}$ in the {\color{green}part A} of \Pic\ \ref{fig:Tm-cog})
is not a $T_m$-coghost. Indeed, every $f^+_{[i,j]}$ ($2\leqslant i\leqslant j$) is monomorphic,
then $h = \prod\limits_{i=2}^t f^+_{[i,j]}: M_{[t,j]} \to M_{[1,j]}$ and $hf^+_{[t+1,j]}$ are monomorphic,
and then $hf^+_{[t+1,j]}\ne 0$. Since $M_{[1,j]}\in\add T$, $f^+_{[t+1,j]}$ is not a $T_m$-coghost.

Secondly, for any $f^+_{[r,s]}$, if $f^+_{[r,s]}$ is not a $T_m$-coghost, then by Lemma \ref{lemm:Tm-cog2},
we obtain that $f^+_{[r,s+1]}$ is not a $T_m$-coghost.
Thus any irreducible morphisms $f^+_{[-,-]}$ in the {\color{blue}part B} of \Pic\ \ref{fig:Tm-cog}.
Therefore, $f^+_{[r,s]}$ is an irreducible $T_m$-coghosts if and only if $f^+_{[r,s]}\in \mathfrak{F}$.

Finally, any $f^-_{[r,s]}:M_{[r,s]}\to M_{[r,s-1]}$ is not a $T_m$-coghost because $\mathfrak{e}_{[r,s-1]}f^-_{[r,s]}\ne 0$,
where $\mathfrak{e}_{[r,s-1]}: M_{[r,s-1]} \to \top(M_{[r,s-1]})$ is the natural epimorphism
from $M_{[r,s-1]}$ to its top $\top(M_{[r,s-1]})\cong M_{[r,r]} \in \add T_m$.
Thus, $f^-_{[r,s]}\notin\mathfrak{F}$.

In conclusion, $f$ is an irreducible $T_m$-coghosts if and only if $f\in \mathfrak{F}$.
\end{proof}

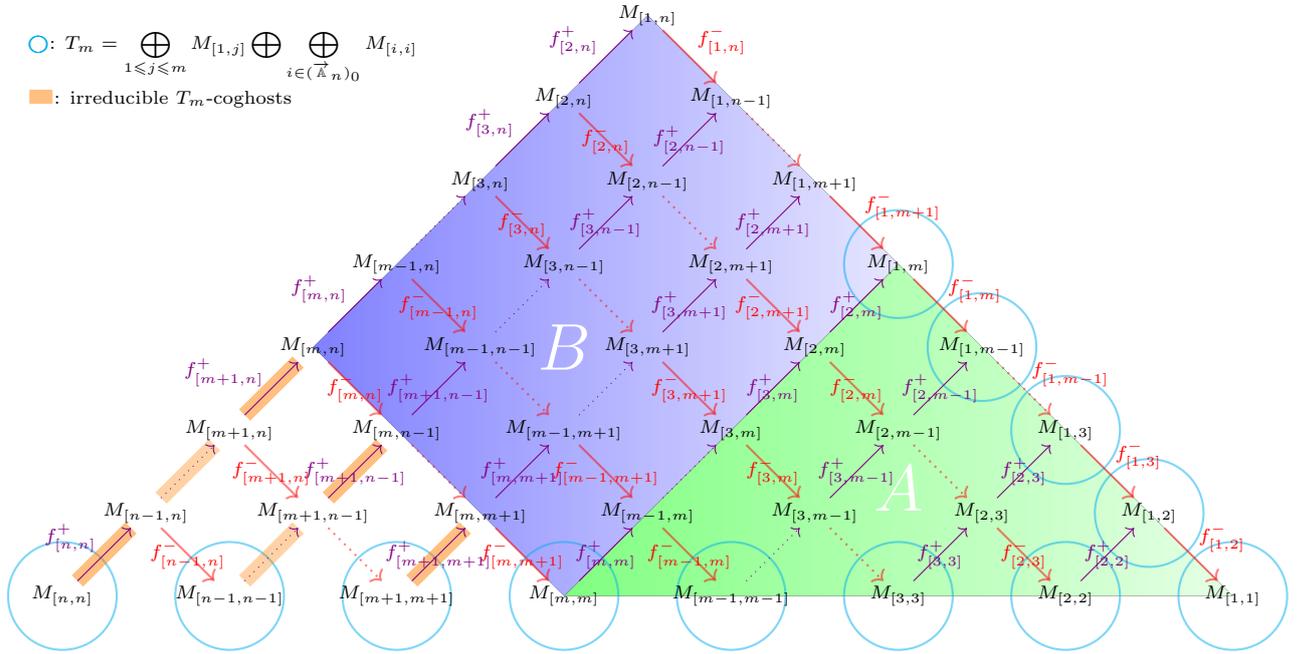
\begin{figure}[htbp]
\centering
\hspace{-1cm}
\begin{tikzpicture}[scale=1.1]\tiny
\fill[left color=green!50, right color=green!10] (8, 3) -- ( 4,-1) -- (12,-1) -- ( 8, 3);
\draw[white] (8,0.3) node[font=\fontsize{30}{0}\selectfont]{$A$};
\fill[left color=blue!50, right color=blue!10] (8, 3) -- (5,6) -- (1,2) -- ( 4,-1);
\draw[white] (4,2) node[font=\fontsize{30}{0}\selectfont]{$B$};
\draw[cyan][line width=0.8pt][opacity=0.5]
     (-2,-1) circle(0.65) ( 0,-1) circle(0.65) ( 2,-1) circle(0.65) ( 4,-1) circle(0.65)
     ( 6,-1) circle(0.65) ( 8,-1) circle(0.65) (10,-1) circle(0.65) (12,-1) circle(0.65)
     (11, 0) circle(0.65) (10, 1) circle(0.65) ( 9, 2) circle(0.65) ( 8, 3) circle(0.65);
\draw[orange][shift={(-1,-1)}][line width=6pt][opacity=0.50] (-1+0.18, 0+0.18)--( 0-0.18, 1-0.18);
\draw[orange][shift={( 0, 0)}][line width=6pt][opacity=0.35] (-1+0.18, 0+0.18)--( 0-0.18, 1-0.18);
\draw[orange][shift={( 1, 1)}][line width=6pt][opacity=0.50] (-1+0.18, 0+0.18)--( 0-0.18, 1-0.18);
\draw[orange][shift={( 1,-1)}][line width=6pt][opacity=0.35] (-1+0.18, 0+0.18)--( 0-0.18, 1-0.18);
\draw[orange][shift={( 2, 0)}][line width=6pt][opacity=0.50] (-1+0.18, 0+0.18)--( 0-0.18, 1-0.18);
\draw[orange][shift={( 3,-1)}][line width=6pt][opacity=0.50] (-1+0.18, 0+0.18)--( 0-0.18, 1-0.18);
\draw (-2+0,-1+0) node{$M_{[n  , n  ]}$}
      (-2+1,-1+1) node{$M_{[n-1, n  ]}$}
      (-2+2,-1+2) node{$M_{[m+1, n  ]}$}
      (-2+3,-1+3) node{$M_{[m  , n  ]}$}
      (-2+4,-1+4) node{$M_{[m-1, n  ]}$}
      (-2+5,-1+5) node{$M_{[3  , n  ]}$}
      (-2+6,-1+6) node{$M_{[2  , n  ]}$}
      (-2+7,-1+7) node{$M_{[1  , n  ]}$};
\draw ( 0+0,-1+0) node{$M_{[n-1, n-1]}$}
      ( 0+1,-1+1) node{$M_{[m+1, n-1]}$}
      ( 0+2,-1+2) node{$M_{[m  , n-1]}$}
      ( 0+3,-1+3) node{$M_{[m-1, n-1]}$}
      ( 0+4,-1+4) node{$M_{[3  , n-1]}$}
      ( 0+5,-1+5) node{$M_{[2  , n-1]}$}
      ( 0+6,-1+6) node{$M_{[1  , n-1]}$};
\draw ( 2+0,-1+0) node{$M_{[m+1, m+1]}$}
      ( 2+1,-1+1) node{$M_{[m  , m+1]}$}
      ( 2+2,-1+2) node{$M_{[m-1, m+1]}$}
      ( 2+3,-1+3) node{$M_{[3  , m+1]}$}
      ( 2+4,-1+4) node{$M_{[2  , m+1]}$}
      ( 2+5,-1+5) node{$M_{[1  , m+1]}$};
\draw ( 4+0,-1+0) node{$M_{[m  , m  ]}$}
      ( 4+1,-1+1) node{$M_{[m-1, m  ]}$}
      ( 4+2,-1+2) node{$M_{[3  , m  ]}$}
      ( 4+3,-1+3) node{$M_{[2  , m  ]}$}
      ( 4+4,-1+4) node{$M_{[1  , m  ]}$};
\draw ( 6+0,-1+0) node{$M_{[m-1, m-1]}$}
      ( 6+1,-1+1) node{$M_{[3  , m-1]}$}
      ( 6+2,-1+2) node{$M_{[2  , m-1]}$}
      ( 6+3,-1+3) node{$M_{[1  , m-1]}$};
\draw ( 8+0,-1+0) node{$M_{[3  , 3  ]}$}
      ( 8+1,-1+1) node{$M_{[2  , 3  ]}$}
      ( 8+2,-1+2) node{$M_{[1  , 3  ]}$};
\draw (10+0,-1+0) node{$M_{[2  , 2  ]}$}
      (10+1,-1+1) node{$M_{[1  , 2  ]}$};
\draw (12+0,-1+0) node{$M_{[1  , 1  ]}$};
\draw[violet] (-1.5+0,-0.3+0) node[left]{$f^+_{[n  , n  ]}$}
              (-1.5+2,-0.3+2) node[left]{$f^+_{[m+1, n  ]}$}
              (-1.5+3,-0.3+3) node[left]{$f^+_{[m  , n  ]}$}
              (-1.5+5,-0.3+5) node[left]{$f^+_{[3  , n  ]}$}
              (-1.5+6,-0.3+6) node[left]{$f^+_{[2  , n  ]}$};
\draw[violet] ( 0.5+1,-0.5+1) node{$f^+_{[m+1, n-1]}$}
              ( 0.5+2,-0.5+2) node{$f^+_{[m+1, n-1]}$}
              ( 0.5+4,-0.5+4) node{$f^+_{[3  , n-1]}$}
              ( 0.5+5,-0.5+5) node{$f^+_{[2  , n-1]}$};
\draw[violet] ( 2.5+0,-0.5+0) node{$f^+_{[m+1, m+1]}$}
              ( 2.5+1,-0.5+1) node{$f^+_{[m  , m+1]}$}
              ( 2.5+3,-0.5+3) node{$f^+_{[3  , m+1]}$}
              ( 2.5+4,-0.5+4) node{$f^+_{[2  , m+1]}$};
\draw[violet] ( 4.5+0,-0.5+0) node{$f^+_{[m  , m  ]}$}
              ( 4.5+2,-0.5+2) node{$f^+_{[3  , m  ]}$}
              ( 4.5+3,-0.5+3) node{$f^+_{[2  , m  ]}$};
\draw[violet] ( 6.5+1,-0.5+1) node{$f^+_{[3  , m-1]}$}
              ( 6.5+2,-0.5+2) node{$f^+_{[2  , m-1]}$};
\draw[violet] ( 8.5+0,-0.5+0) node{$f^+_{[3  , 3  ]}$}
              ( 8.5+1,-0.5+1) node{$f^+_{[2  , 3  ]}$};
\draw[violet] (10.5+0,-0.5+0) node{$f^+_{[2  , 2  ]}$};
\draw[   red] (11.5-0,-0.3+0) node[right]{$f^-_{[1  , 2  ]}$}
              (11.5-1,-0.3+1) node[right]{$f^-_{[1  , 3  ]}$}
              (11.5-2,-0.3+2) node[right]{$f^-_{[1  , m-1]}$}
              (11.5-3,-0.3+3) node[right]{$f^-_{[1  , m  ]}$}
              (11.5-4,-0.3+4) node[right]{$f^-_{[1  , m+1]}$}
              (11.5-6,-0.3+6) node[right]{$f^-_{[1  , n  ]}$};
\draw[   red] ( 9.5-0,-0.5+0) node{$f^-_{[2  , 3  ]}$}
              ( 9.5-2,-0.5+2) node{$f^-_{[2  , m  ]}$}
              ( 9.5-3,-0.5+3) node{$f^-_{[2  , m+1]}$}
              ( 9.5-5,-0.5+5) node{$f^-_{[2  , n  ]}$};
\draw[   red] ( 7.5-1,-0.5+1) node{$f^-_{[3  , m  ]}$}
              ( 7.5-2,-0.5+2) node{$f^-_{[3  , m+1]}$}
              ( 7.5-4,-0.5+4) node{$f^-_{[3  , n  ]}$};
\draw[   red] ( 5.5-0,-0.5+0) node{$f^-_{[m-1, m  ]}$}
              ( 5.5-1,-0.5+1) node{$f^-_{[m-1, m+1]}$}
              ( 5.5-3,-0.5+3) node{$f^-_{[m-1, n  ]}$};
\draw[   red] ( 3.5-0,-0.5+0) node{$f^-_{[m  , m+1]}$}
              ( 3.5-2,-0.5+2) node{$f^-_{[m  , n  ]}$};
\draw[   red] ( 1.5-1,-0.5+1) node{$f^-_{[m+1, n  ]}$};
\draw[   red] (-0.5-0,-0.5+0) node{$f^-_{[n-1, n  ]}$};
%
\draw[shift={(-1,-1)}][violet][->] (-1+0.18, 0+0.18)--( 0-0.18, 1-0.18);
\draw[shift={( 0, 0)}][violet][->] (-1+0.18, 0+0.18)--( 0-0.18, 1-0.18)[dotted];
\draw[shift={( 1, 1)}][violet][->] (-1+0.18, 0+0.18)--( 0-0.18, 1-0.18);
\draw[shift={( 2, 2)}][violet][->] (-1+0.18, 0+0.18)--( 0-0.18, 1-0.18);
\draw[shift={( 3, 3)}][violet][->] (-1+0.18, 0+0.18)--( 0-0.18, 1-0.18)[dotted];
\draw[shift={( 4, 4)}][violet][->] (-1+0.18, 0+0.18)--( 0-0.18, 1-0.18);
\draw[shift={( 5, 5)}][violet][->] (-1+0.18, 0+0.18)--( 0-0.18, 1-0.18);
\draw[shift={(-1+2,-1)}][violet][->] (-1+0.18, 0+0.18)--( 0-0.18, 1-0.18)[dotted];
\draw[shift={( 0+2, 0)}][violet][->] (-1+0.18, 0+0.18)--( 0-0.18, 1-0.18);
\draw[shift={( 1+2, 1)}][violet][->] (-1+0.18, 0+0.18)--( 0-0.18, 1-0.18);
\draw[shift={( 2+2, 2)}][violet][->] (-1+0.18, 0+0.18)--( 0-0.18, 1-0.18)[dotted];
\draw[shift={( 3+2, 3)}][violet][->] (-1+0.18, 0+0.18)--( 0-0.18, 1-0.18);
\draw[shift={( 4+2, 4)}][violet][->] (-1+0.18, 0+0.18)--( 0-0.18, 1-0.18);
\draw[shift={(-1+4,-1)}][violet][->] (-1+0.18, 0+0.18)--( 0-0.18, 1-0.18);
\draw[shift={( 0+4, 0)}][violet][->] (-1+0.18, 0+0.18)--( 0-0.18, 1-0.18);
\draw[shift={( 1+4, 1)}][violet][->] (-1+0.18, 0+0.18)--( 0-0.18, 1-0.18)[dotted];
\draw[shift={( 2+4, 2)}][violet][->] (-1+0.18, 0+0.18)--( 0-0.18, 1-0.18);
\draw[shift={( 3+4, 3)}][violet][->] (-1+0.18, 0+0.18)--( 0-0.18, 1-0.18);
\draw[shift={(-1+6,-1)}][violet][->] (-1+0.18, 0+0.18)--( 0-0.18, 1-0.18);
\draw[shift={( 0+6, 0)}][violet][->] (-1+0.18, 0+0.18)--( 0-0.18, 1-0.18)[dotted];
\draw[shift={( 1+6, 1)}][violet][->] (-1+0.18, 0+0.18)--( 0-0.18, 1-0.18);
\draw[shift={( 2+6, 2)}][violet][->] (-1+0.18, 0+0.18)--( 0-0.18, 1-0.18);
\draw[shift={(-1+8,-1)}][violet][->] (-1+0.18, 0+0.18)--( 0-0.18, 1-0.18)[dotted];
\draw[shift={( 0+8, 0)}][violet][->] (-1+0.18, 0+0.18)--( 0-0.18, 1-0.18);
\draw[shift={( 1+8, 1)}][violet][->] (-1+0.18, 0+0.18)--( 0-0.18, 1-0.18);
\draw[shift={(-1+10,-1)}][violet][->] (-1+0.18, 0+0.18)--( 0-0.18, 1-0.18);
\draw[shift={( 0+10, 0)}][violet][->] (-1+0.18, 0+0.18)--( 0-0.18, 1-0.18);
\draw[shift={(-1+12,-1)}][violet][->] (-1+0.18, 0+0.18)--( 0-0.18, 1-0.18);
\draw[shift={(12-0, 0+0)}][red][opacity=0.5][line width=0.8pt][->] (-1+0.18, 0-0.18)--( 0-0.18,-1+0.18);
\draw[shift={(12-1, 0+1)}][red][opacity=0.5][line width=0.8pt][->] (-1+0.18, 0-0.18)--( 0-0.18,-1+0.18);
\draw[shift={(12-2, 0+2)}][red][opacity=0.5][line width=0.8pt][->] (-1+0.18, 0-0.18)--( 0-0.18,-1+0.18)[dotted];
\draw[shift={(12-3, 0+3)}][red][opacity=0.5][line width=0.8pt][->] (-1+0.18, 0-0.18)--( 0-0.18,-1+0.18);
\draw[shift={(12-4, 0+4)}][red][opacity=0.5][line width=0.8pt][->] (-1+0.18, 0-0.18)--( 0-0.18,-1+0.18);
\draw[shift={(12-5, 0+5)}][red][opacity=0.5][line width=0.8pt][->] (-1+0.18, 0-0.18)--( 0-0.18,-1+0.18)[dotted];
\draw[shift={(12-6, 0+6)}][red][opacity=0.5][line width=0.8pt][->] (-1+0.18, 0-0.18)--( 0-0.18,-1+0.18);
\draw[shift={(10-0, 0+0)}][red][opacity=0.5][line width=0.8pt][->] (-1+0.18, 0-0.18)--( 0-0.18,-1+0.18);
\draw[shift={(10-1, 0+1)}][red][opacity=0.5][line width=0.8pt][->] (-1+0.18, 0-0.18)--( 0-0.18,-1+0.18)[dotted];
\draw[shift={(10-2, 0+2)}][red][opacity=0.5][line width=0.8pt][->] (-1+0.18, 0-0.18)--( 0-0.18,-1+0.18);
\draw[shift={(10-3, 0+3)}][red][opacity=0.5][line width=0.8pt][->] (-1+0.18, 0-0.18)--( 0-0.18,-1+0.18);
\draw[shift={(10-4, 0+4)}][red][opacity=0.5][line width=0.8pt][->] (-1+0.18, 0-0.18)--( 0-0.18,-1+0.18)[dotted];
\draw[shift={(10-5, 0+5)}][red][opacity=0.5][line width=0.8pt][->] (-1+0.18, 0-0.18)--( 0-0.18,-1+0.18);
\draw[shift={(8-0, 0+0)}][red][opacity=0.5][line width=0.8pt][->] (-1+0.18, 0-0.18)--( 0-0.18,-1+0.18)[dotted];
\draw[shift={(8-1, 0+1)}][red][opacity=0.5][line width=0.8pt][->] (-1+0.18, 0-0.18)--( 0-0.18,-1+0.18);
\draw[shift={(8-2, 0+2)}][red][opacity=0.5][line width=0.8pt][->] (-1+0.18, 0-0.18)--( 0-0.18,-1+0.18);
\draw[shift={(8-3, 0+3)}][red][opacity=0.5][line width=0.8pt][->] (-1+0.18, 0-0.18)--( 0-0.18,-1+0.18)[dotted];
\draw[shift={(8-4, 0+4)}][red][opacity=0.5][line width=0.8pt][->] (-1+0.18, 0-0.18)--( 0-0.18,-1+0.18);
\draw[shift={(6-0, 0+0)}][red][opacity=0.5][line width=0.8pt][->] (-1+0.18, 0-0.18)--( 0-0.18,-1+0.18);
\draw[shift={(6-1, 0+1)}][red][opacity=0.5][line width=0.8pt][->] (-1+0.18, 0-0.18)--( 0-0.18,-1+0.18);
\draw[shift={(6-2, 0+2)}][red][opacity=0.5][line width=0.8pt][->] (-1+0.18, 0-0.18)--( 0-0.18,-1+0.18)[dotted];
\draw[shift={(6-3, 0+3)}][red][opacity=0.5][line width=0.8pt][->] (-1+0.18, 0-0.18)--( 0-0.18,-1+0.18);
\draw[shift={(4-0, 0+0)}][red][opacity=0.5][line width=0.8pt][->] (-1+0.18, 0-0.18)--( 0-0.18,-1+0.18);
\draw[shift={(4-1, 0+1)}][red][opacity=0.5][line width=0.8pt][->] (-1+0.18, 0-0.18)--( 0-0.18,-1+0.18)[dotted];
\draw[shift={(4-2, 0+2)}][red][opacity=0.5][line width=0.8pt][->] (-1+0.18, 0-0.18)--( 0-0.18,-1+0.18);
\draw[shift={(2-0, 0+0)}][red][opacity=0.5][line width=0.8pt][->] (-1+0.18, 0-0.18)--( 0-0.18,-1+0.18)[dotted];
\draw[shift={(2-1, 0+1)}][red][opacity=0.5][line width=0.8pt][->] (-1+0.18, 0-0.18)--( 0-0.18,-1+0.18);
\draw[shift={(0-0, 0+0)}][red][opacity=0.5][line width=0.8pt][->] (-1+0.18, 0-0.18)--( 0-0.18,-1+0.18);
\draw(-2.5, 5.5) node[right]{${\color{cyan}\pmb{\bigcirc}}$:
                $T_m = \displaystyle \bigoplus_{1\leqslant j\leqslant m}M_{[1,j]}\bigoplus\bigoplus_{i\in(\dA_n)_0}M_{[i,i]}$};
\draw(-2.5, 5.0) node[right]{${\color{orange!50}\blacksquare\!\!\blacksquare}$: irreducible $T_m$-coghosts};
\end{tikzpicture}
\caption{The Auslander-Reiten quiver of $A_n$ ($T_m$-coghosts)}
\label{fig:Tm-cog}
\end{figure}

\begin{lemma} \label{lemm:An-compo=0}
For any $n$ homomorphisms $f_i: X_i\to X_{i+1}$ $(1\leqslant i\leqslant n)$ in $\rad_{A_n}(X_i,X_{i+1})$,
we have $f_n\circ \cdots\circ f_2\circ f_1=0$.
\end{lemma}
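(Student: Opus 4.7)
The plan is to reduce the statement to indecomposable modules and then exploit the combinatorics of interval modules over $A_n$.

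First, since $\rad_{A_n}$ is an ideal of the Krull--Schmidt category $\mod A_n$, I would decompose each $X_i$ into indecomposable summands and write each $f_i$ as a matrix whose entries lie in the radical between indecomposable summands. The composite $f_n\circ\cdots\circ f_1$ then expands as a sum of compositions along chains of indecomposables, so it suffices to treat the case when every $X_i$ is indecomposable, say $X_i = M_{[a_i,b_i]}$; moreover one may assume each $f_i$ is non-zero, else the composition is trivially zero.

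Next, I would record the fact that $\Hom_{A_n}(M_{[a,b]}, M_{[c,d]})$ is one-dimensional precisely when $c\leqslant a\leqslant d\leqslant b$, a generator acting as the identity scalar on the common support $[a,d]$ and zero elsewhere. Hence a non-zero radical morphism $f_i\colon M_{[a_i,b_i]}\to M_{[a_{i+1},b_{i+1}]}$ forces $a_{i+1}\leqslant a_i$ and $b_{i+1}\leqslant b_i$ with at least one strict inequality. Multiplying scalars on the overlapping supports vertex by vertex, one sees that the composite $M_{[a_1,b_1]}\to\cdots\to M_{[a_{n+1},b_{n+1}]}$ of non-zero morphisms is itself non-zero if and only if the intersection $\bigcap_{i=1}^{n}[a_i,b_{i+1}]=[a_1,b_{n+1}]$ is non-empty, i.e.\ $a_1\leqslant b_{n+1}$.

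Finally, I would derive a contradiction by a simple counting argument. Suppose the composition is non-zero, and set $p=a_1-a_{n+1}$ and $q=b_1-b_{n+1}$. Each of the $n$ non-identity radical steps contributes at least $1$ to $p+q$, so $p+q\geqslant n$. On the other hand, $a_{n+1}\geqslant 1$ gives $p\leqslant a_1-1$, while the non-vanishing condition $b_{n+1}\geqslant a_1$ combined with $b_1\leqslant n$ gives $q=b_1-b_{n+1}\leqslant n-a_1$. Adding these bounds yields $p+q\leqslant n-1$, contradicting $p+q\geqslant n$. Hence some $f_i$ must vanish and $f_n\circ\cdots\circ f_1=0$.

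The main obstacle will be to pin down the non-vanishing criterion $a_1\leqslant b_{n+1}$ rigorously, since it requires tracking the pointwise product of the identity-on-support scalars through the entire chain (a naive argument based only on the strict descent of $(a_i,b_i)$ does not suffice, as a chain of non-zero irreducible maps in $\Gamma(A_n)$ can be longer than $n-1$ without the total composition surviving). Once this combinatorial characterization is in place, the descent counting above closes the proof cleanly.
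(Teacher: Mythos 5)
Your proof is correct, but it takes a genuinely different route from the paper's. The paper passes to the Auslander algebra $A_n^{\Aus}$ of $A_n$, notes that each $f_i$ lies in $\rad(A_n^{\Aus})$, and concludes via $\rad^n(A_n^{\Aus})=0$, which it justifies by asserting that all non-zero paths in the bound quiver $(\Gamma(\mod A_n),\I(\mod A_n))$ have length at most $n-1$ (exhibiting two maximal paths but not explaining why longer paths must die against the mesh relations). You instead stay inside $\mod A_n$: after the Krull--Schmidt reduction to chains of indecomposables, you invoke the standard Hom formula
$\Hom_{A_n}(M_{[a,b]},M_{[c,d]})\neq 0$ exactly when $c\leqslant a\leqslant d\leqslant b$,
observe that the composite along a chain of non-zero morphisms acts as the identity on $\bigcap_{i}[a_i,b_{i+1}]=[a_1,b_{n+1}]$ and zero elsewhere, and then derive the vanishing from the elementary count
$(a_1-a_{n+1})+(b_1-b_{n+1})\geqslant n$ versus $\leqslant (a_1-1)+(n-a_1)=n-1$.
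This counting is precisely the combinatorial content behind the paper's asserted bound on path lengths in $A_n^{\Aus}$, so the two arguments are equivalent in substance; what your version buys is a fully self-contained and more explicit proof that does not rely on knowledge of the Auslander algebra's Loewy length, while the paper's version is shorter once one grants that fact.
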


\begin{proof}
Consider the Auslander algebra $A_n^{\Aus}=\kk\Gamma(\mod A_n)/\I(\mod A_n)$ of $A_n$,
where $\I(\mod A_n)$ is generated by
\begin{itemize}
  \item[(1)]
    the zero relations $f^-_{[t-1,t]}f^+_{[t,t]}$ ($1< t\leqslant n$)
  \item[(2)]
    and the commutative relations
    $f^-_{[r-1,s]}f^+_{[r,s]}+f^-_{[r,s]}f^+_{[r,s-1]}$
    ($2\leqslant r\leqslant n-1$, $3\leqslant s\leqslant n$).
\end{itemize}
Any morphism $f_i$ is an element in $A_n^{\Aus}$, then there are $k_{i1}$, $\cdots$, $k_{it_i} \in \kk$ such that
\[f_i = \sum_{u=1}^{t_i} k_{iu}\wp_{iu},\]
where every $\wp_{iu}$ is a path on the bound quiver $(\Gamma(\mod A_n), \I(\mod A_n))$ of $A_n^{\Aus}$
whose length $\ell(\wp_{iu})$ satisfies the following conditions:
\begin{itemize}
  \item $\ell(\wp_{iu})\geqslant 1$ because $f_i\in \rad_{A_n}(X_i,X_{i+1}) \subseteq \rad(A_{n}^{\Aus})$;
  \item and $\ell(\wp_{iu})\leqslant n-1$ because $f^+_{[2,n]}\circ\cdots \circ f^+_{[n,n]}$ and $f^-_{[1,2]}\circ\cdots\circ f^-_{[1,n]}$
    are the longest paths in $(\Gamma(\mod A_n), \I(\mod A_n))$ whose length are $n-1$.
\end{itemize}
Therefore, $f_n\cdots f_1\in \rad^n(A_n^{\Aus})=0$, to be more precise, we have
\[ f_n\circ\cdots\circ f_2 \circ f_1
= \sum_{(u_1, \cdots, u_n)\in (\mathbb{N}^+_{\leqslant n})^{\times n}}
  \prod_{i=1}^n k_{i u_i}\wp_{i u_i}. \]
Since $\ell(\wp_{n u_n}\cdots \wp_{1 u_1})$ equals to either $\sum_{i=1}^n\ell(\wp_{i u_i})$ $(\geqslant n)$ or zero,
we have $\wp_{n u_n}\cdots \wp_{1 u_1}=0$, and so $f_n\circ \cdots \circ f_1=0$.
\end{proof}

\begin{proposition} \label{prop:max-ospecAn}
$\sup(\ospec(A_n)) = n-1$.
\end{proposition}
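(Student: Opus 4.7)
The plan is to combine the structural results developed in Section~2.3.2 about coghosts over an Artin algebra with the radical-nilpotency Lemma~\ref{lemm:An-compo=0} specific to $A_n$. From Proposition~\ref{prop-An} we already know $n-1 \in \ospec(A_n)$, so it remains to establish the upper bound $\gent_{\mod A_n}(T) \leqslant n-1$ for every strong generator $T$ of $\mod A_n$.

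First I would show that every $T$-coghost $f: X \to Y$ in $\mod A_n$ lies in $\rad_{A_n}(X,Y)$. Decomposing $Y = Y_1 \oplus \cdots \oplus Y_k$ into indecomposables, the components $f_i: X \to Y_i$ of $f$ are again $T$-coghosts (by Remark~\ref{rmk:ideal} applied to the canonical projections, which land in $\add T$ only after the decomposition; more directly, composing $f$ with any map to an object of $\add T$ decomposes through the $Y_i$, forcing each $f_i$ to be a $T$-coghost as well). The last proposition of Section~2.3.2 then gives $f_i \in \rad_{A_n}(X, Y_i)$, hence $f \in \rad_{A_n}(X,Y)$.

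Next I would apply Lemma~\ref{lemm:An-compo=0}: any composition of $n$ morphisms in the radical of $\mod A_n$ vanishes. Consequently, the composition $f_1 \circ f_2 \circ \cdots \circ f_n$ of any chain of $n$ $T$-coghosts is zero. Contrapositively, by Property~\ref{chain_map_coghost}, no module $Y_0 \in \mod A_n$ can fail to lie in $[T]_n$; that is, $[T]_n = \mod A_n$ for every strong generator $T$. Hence $\gent_{\mod A_n}(T) \leqslant n-1$, which yields $\sup(\ospec(A_n)) \leqslant n-1$. Combining with $n-1 \in \ospec(A_n)$ from Proposition~\ref{prop-An}(a), we conclude $\sup(\ospec(A_n)) = n-1$.

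The main technical obstacle will be the step identifying $T$-coghosts with radical morphisms: one must verify cleanly that the coghost property descends through the direct-sum decomposition of the target, so that the result from Section~2.3.2 (stated for indecomposable targets) applies componentwise. Once this is in place the remainder is a direct application of Lemma~\ref{lemm:An-compo=0} and the coghost criterion of Property~\ref{chain_map_coghost}, and no further combinatorics of the Auslander--Reiten quiver of $A_n$ (which was the content of Proposition~\ref{prop:Tm-cog}) is needed for the upper-bound half of the statement.
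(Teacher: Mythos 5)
Your argument shares the paper's two essential ingredients---that a $T$-coghost for a strong generator $T$ must lie in the radical, and the nilpotency Lemma~\ref{lemm:An-compo=0}---but obtains the radical containment by a different route. You invoke the unnumbered Property at the end of Section~2.3.2 (coghosts into indecomposable targets are radical) and reduce the general case componentwise, checking correctly that the projection components of a $T$-coghost are again $T$-coghosts; the paper instead unpacks each $f_i$ as an element of the Auslander algebra $A_n^{\Aus}$, splits off an identity-like component, and argues that it cannot be a coghost. Your route is cleaner. You also argue directly that $[T]_n = \mod A_n$ for every strong generator $T$, whereas the paper runs a proof by contradiction starting from a hypothetical $y \in \ospec(A_n)$ with $y \geqslant n$; these are equivalent reorganizations. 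You are also right that Proposition~\ref{prop:Tm-cog} is not needed for the upper-bound half.

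There is, however, one slip you should repair. You say you apply Property~\ref{chain_map_coghost} \emph{contrapositively}, but its contrapositive says ``$Y_0 \in [T]_n$ implies every $n$-fold coghost composition into $Y_0$ vanishes,'' which is the implication you do \emph{not} need. What you want is the converse, and the correct reference is the equivalence in Coghost Lemma~\ref{coghostlemm}(b). Note, though, that the Coghost Lemma delivers $Y_0 \in [\Sub(T)]_n$, not $Y_0 \in [T]_n$, and since $[T]_n \subseteq [\Sub(T)]_n$ may be strict, the conclusion ``$[T]_n = \mod A_n$'' does not follow on the nose. To be fair, the paper's own argument at this step has the same unaddressed subtlety (it extracts a nonzero $y$-fold coghost from $\gent(T)\geqslant y$, which genuinely requires a module outside $[\Sub(T)]_y$, not merely outside $[T]_y$), so you are not worse off than the paper, but you should flag that some additional argument comparing $[T]_n$ and $[\Sub(T)]_n$ for strong generators of $\mod A_n$ is still needed for either proof to be fully watertight.
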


\begin{proof}
We have $n-1\in\ospec(A_n)$ by Proposition \ref{prop-An}.
Next, assume that there is an integer $y\geqslant n$ such that $y\in \ospec(A_n)$,
then there is a strong generator $T$ satisfying $[T]_y=\mod(A_n)$,
and $[T]_y\backslash [T]_{n-1}$ contains at least one non-zero $A_n$-module $Y$
such that there exists a homomorphism
$g\in \Gh_{T}^{y}(X, Y)$ which is not zero,
where $X$ is a non-zero $A_n$-module (see Coghost Lemma \ref{coghostlemm}).
Therefore, we can find $y$ $T$-coghosts
\[f_1: X=X_1\to X_2,\ f_2:X_2\to X_3,\ \ldots,\ f_y: X_y\to X_{y+1}=Y\]
satisfying the following two conditions:
\begin{itemize}
  \item all $f_i$ are not isomorphisms (by Lemma \ref{f_is_not_coghost});
  \item $g=f_y\circ\cdots \circ f_2\circ f_1 \ne 0$.
\end{itemize}
Consider the  Auslander algebra
\[A_{n}^{\Aus} = \End_{A_n}\bigg(\bigoplus_{M\in \mathrm{ind}(\mod(A_n))} M \bigg) \]
of $A_n$ whose quiver $\Gamma(A_n) = (\Gamma(A_n)_0, \Gamma(A_n)_1, \s, \t)$
is shown in Figure \ref{fig:linaer-An}.
Then each homomorphism $f_i$ can be seen as an element in $A_n^{\Aus}$.
In the above sense, $f_i$ is either a homomorphism lying in $\rad(A_n^{\Aus})$
or a homomorphism of the form
\[ \big(f_i = \big(f_{i1}\ f_{i2}\big): X_{i} = X_{i1}\oplus X_{i2} \To{} X_{i+1}\big)
   \in \kk\Gamma(A_n)_0 \oplus \rad(A^{\Aus}), \]
such that $f_{i1}\in\kk\Gamma(A_n)_0, \ f_{i2}\in \rad(A^{\Aus})$.
Then for any $h: X_{i+1} \to T'$, $T'\in\add T$, we have
\[h\circ f_i = \big( h\circ f_{i1}\ \ h\circ f_{i2} \big) = 0, \]
since $f_i$ is a $T$-coghost, and then $h\circ f_{i1}=0$ and $h\circ f_{12}=0$.
The above follows that  $f_{i1}$ is a $T$-coghost, it contradicts with Lemma \ref{f_is_not_coghost}.
$f_{i1}$ is an isomorphism defined on some semi-simple module.
Then each homomorphism $f_i$, as an element in $A^{\Aus}$, lies in $\rad(A^{\Aus})$,
that is, $f_i\in\rad_{A_n} (X_i, X_{i+1})$. It contradicts with Lemma \ref{lemm:An-compo=0}.
\end{proof}

\begin{theorem} \label{thm:main 3}
 $\ospec (A_{n})=\{0,1,2,\cdots,n-1\}.$
\end{theorem}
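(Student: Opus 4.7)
The plan is to assemble Theorem \ref{thm:main 3} directly from the two immediately preceding propositions; no new constructions are needed. First I would invoke Proposition \ref{prop-An}, which already delivers the forward containment $\{0, 1, 2, \ldots, n-1\} \subseteq \ospec(A_n)$. That proposition exhibits, for each value $j \in \{0, 1, \ldots, n-1\}$, an explicit choice of $\simp$ (namely $\simp = \varnothing$ for $j = n-1$, and $\simp = \{S(1), \ldots, S(k)\}$ for appropriate $k$) whose associated $t_{\simp}$-radical layer length $\LL^{t_{\simp}}(A_n)$ equals $j + 1$, after which Theorem \ref{thm:main 2} produces $j \in \ospec(A_n)$.

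For the reverse containment $\ospec(A_n) \subseteq \{0, 1, 2, \ldots, n-1\}$, I would appeal to Proposition \ref{prop:max-ospecAn}, which asserts $\sup(\ospec(A_n)) = n-1$. Since the Orlov spectrum consists of non-negative integers by Definition \ref{def2.1} and Definition \ref{def:OSpec}, an upper bound of $n-1$ on the supremum forces every element of $\ospec(A_n)$ to lie in $\{0, 1, \ldots, n-1\}$. Combining the two inclusions yields the equality.

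The only real work lies in the two cited propositions, so there is no substantive obstacle remaining at this stage. In particular, the upper-bound direction ultimately rests on the classification of irreducible $T_m$-coghosts in Proposition \ref{prop:Tm-cog} and on the length bound for radical paths in the Auslander algebra $A_n^{\Aus}$ recorded in Lemma \ref{lemm:An-compo=0}; once those are available, any alleged generation time $y \geqslant n$ would yield a nonzero composition of $y$ coghosts between indecomposables, contradicting the nilpotency index of $\rad(A_n^{\Aus})$. The final write-up of the theorem itself should therefore be a brief two-line assembly citing Propositions \ref{prop-An} and \ref{prop:max-ospecAn}.
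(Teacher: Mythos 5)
Your proposal is correct and matches the paper's own proof exactly: the theorem is obtained by combining Proposition \ref{prop-An} for the lower containment with Proposition \ref{prop:max-ospecAn} for the upper bound $\sup(\ospec(A_n)) = n-1$. The additional commentary you give on what drives those two propositions is accurate but not part of the two-line assembly the paper itself records.
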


\begin{proof}
This theorem is obtained by Propositions \ref{prop-An} and \ref{prop:max-ospecAn} immediately.
\end{proof}

\section*{Acknowledgements}

The authors are greatly indebted to Pat Lank for helpful suggestions.

\section*{Funding}

This work is supported by National Natural Science Foundation of China (Grant Nos. 12401042, 12171207),
Guizhou Provincial Basic Research Program (Natural Science) (Grant No. ZK[2024]YiBan066),
and Scientific Research Foundation of Guizhou University (Grant Nos. [2022]53, [2022]65, [2023]16).






\begin{thebibliography}{10}

\bibitem{assem2006elements}
I.~Assem, D.~Simson, and A.~Skowronski.
\newblock {\em Elements of the Representation Theory of Associative Algebras:
  Volume 1: Techniques of Representation Theory}.
\newblock Cambridge University Press, 2006

\bibitem{Assem2016The}
I.~Assem, A.~Skowroński, and S.~Trepode.
\newblock The representation dimension of a selfinjective algebra of Euclidean type.
\newblock {\em Journal of Algebra}, \textbf{459}(1):157--188, 2016.
\href{https://doi.org/10.1016/j.jalgebra.2016.03.033}{DOI:10.1016/j.jalgebra.2016.03.033}


\bibitem{auslander1999representation}
M.~Auslander.
\newblock Representation dimension of Artin algebras,
\newblock In: {\em Selected works of Maurice Auslander},
page 505--574, 1999

\bibitem{auslander1997representation}
M.~Auslander, I.~Reiten, and S.~O. Smalo.
\newblock {\em Representation theory of Artin algebras}, volume~36.
\newblock Cambridge university press, 1997

\bibitem{ballard2012orlov}
M.~Ballard, D.~Favero, and L.~Katzarkov.
\newblock Orlov spectra: bounds and gaps.
\newblock {\em Inventiones mathematicae},
\textbf{189}(2):359--430, 2012

\bibitem{beligiannis2008some}
A.~Beligiannis.
\newblock Some ghost lemmas, survey for
`\href{https://www.math.uni-bielefeld.de/~sek/2008/}{the representation dimension of Artin algebras}',
\newblock Bielefeld, 2008.
\href{http://users.uoi.gr/abeligia}{http://users.uoi.gr/abeligia}\href{http://users.uoi.gr/abeligia/ghosts.pdf}{/ghosts.pdf}

\bibitem{bergh2015gorenstein}
P.~A. Bergh, S.~Oppermann, and D.~A. Jorgensen.
\newblock The Gorenstein defect category.
\newblock {\em The Quarterly Journal of Mathematics},
\textbf{66}(2):459--471, 2015.
\href{https://doi.org/10.1093/qmath/hav001}{DOI:10.1093/qmath/hav001}

\bibitem{bondal2003generators}
A.~I. Bondal and M.~Van~den Bergh.
\newblock Generators and representability of functors in commutative and noncommutative geometry.
\newblock {\em Moscow Mathematical Journal},
\textbf{3}(1):1--36, 2003.
\href{https://researchportal.vub.be/en/publications/generators-and-representability-of-functors-in-commutative-and-no}
{DOI:10.17323/1609-4514-2003-3-1-1-36}

\bibitem{2019Pullback}
D.~Bravo, M.~Lanzilotta, and O.~Mendoza.
\newblock Pullback diagrams, syzygy finite classes and Igusa--Todorov algebras.
\newblock {\em Journal of Pure and Applied Algebra},
\textbf{22}3(10):4494--4508, 2019.

\href{https://doi.org/10.1016/j.jpaa.2019.01.018}{DOI:10.1016/j.jpaa.2019.01.018}

\bibitem{chen2008algebras}
X.-W. Chen, Y.~Ye, and P.~Zhang.
\newblock Algebras of derived dimension zero.
\newblock {\em Communications in Algebra},
\textbf{36}(1):1--10, 2008.
\href{https://doi.org/10.1080/00927870701649184}{DOI:10.1080/00927870701649184}

\bibitem{dao2014radius}
H.~Dao and R.~Takahashi.
\newblock The radius of a subcategory of modules.
\newblock {\em Algebra \& Number Theory},
\textbf{8}(1):141--172, 2014.
\href{https://msp.org/ant/2014/8-1/p05.xhtml}{DOI:10.2140/ant.2014.8.141}

\bibitem{han2009derived}
Y.~Han.
\newblock Derived dimensions of representation-finite algebras.
\newblock Preprint, \href{https://arxiv.org/abs/0909.0330}{arXiv:0909.0330}, 2009

\bibitem{huard2013layer}
F.~Huard, M.~Lanzilotta, and O.~M. Hern{\'a}ndez.
\newblock Layer lengths, torsion theories and the finitistic dimension.
\newblock {\em Applied Categorical Structures},
\textbf{21}(4):379--392, 2013.
\href{https://doi.org/10.1007/s10485-011-9268-x}{DOI:10.1007/s10485-011-9268-x}

\bibitem{huard2008An}
F.~Huard, M.~Lanzilotta, and O.~Mendoza.
\newblock An approach to the finitistic dimension conjecture.
\newblock {\em Journal of Algebra},
\textbf{319}(9):3918--3934, 2008.
\href{https://doi.org/10.1016/j.jalgebra.2008.02.008}{DOI:10.1016/j.jalgebra.2008.02.008}

\bibitem{huard2009finitistic}
F.~Huard, M.~Lanzilotta, and O.~Mendoza.
\newblock Finitistic dimension through infinite projective dimension.
\newblock {\em Bulletin of the London Mathematical Society},
\textbf{41}(2):367--376, 2009.
\href{https://doi.org/10.1112/blms/bdp010}{DOI:10.1112/blms/bdp010}


\bibitem{huang2012invariant}
Z.~Huang and J.~Sun.
\newblock Invariant properties of representations under excellent extensions.
\newblock {\em Journal of Algebra},
\textbf{358}(15):87--101, 2012.
\href{https://doi.org/10.1016/j.jalgebra.2012.03.004}{DOI:10.1016/j.jalgebra.2012.03.004}

\bibitem{Lank2023}
P. Lank.
Strong generation \& (co)ghost index for module categories (first edition).
Preprint, \href{https://arxiv.org/abs/2307.13675v1}{arXiv:2307.13675v1}, 2023.

\bibitem{Linckelmann2011Finite}
M. Linckelmann.
\newblock Finite generation of Hochschild cohomology of Hecke algebras
  of finite classical type in characteristic zero.
\newblock {\em Bulletin of the London Mathematical Society},
\textbf{43}(5), 871--885, 2011.
\href{https://doi.org/10.1112/blms/bdr024}{DOI:10.1112/blms/bdr024}


\bibitem{oppermann2009lower}
S.~Oppermann.
\newblock Lower bounds for Auslander's representation dimension.
\newblock {\em Duke Mathematical Journal},
\textbf{148}(2):211--249, 2009.
\href{https://doi.org/10.1215/00127094-2009-025}{DOI:10.1215/00127094-2009-025}


\bibitem{oppermann2012generating}
S.~Oppermann and J.~{\v{S}}t'ov{\'\i}{\v{c}}ek.
\newblock Generating the bounded derived category and perfect ghosts.
\newblock {\em Bulletin of the London Mathematical Society},
\textbf{44}(2):285--298, 2012.
\href{https://doi.org/10.1112/blms/bdr093}{DOI:10.1112/blms/bdr093}


\bibitem{orlov2009}
D.~Orlov,
\newblock Remarks on generators and dimensions of triangulated categories,
\newblock {\em Moscow Mathematical Journal},
\textbf{9}(1), 153--159 (2009).
\href{https://zbmath.org/?q=an:05642253}{DOI:10.17323/1609-4514-2009-9-1-143-149}

\bibitem{rouquier2006representation}
R.~Rouquier.
\newblock Representation dimension of exterior algebras.
\newblock {\em Inventiones mathematicae},
\textbf{165}(2):357--367, 2006.
\href{https://doi.org/10.1007/s00222-006-0499-7}{DOI:10.1007/s00222-006-0499-7}


\bibitem{rouquier2008dimensions}
R.~Rouquier.
\newblock Dimensions of triangulated categories.
\newblock {\em Journal of K-theory},
\textbf{1}(2):193--256, 2008.
\href{https://doi.org/10.1017/is008004024jkt010}{DOI:10.1017/is008004024jkt010}


\bibitem{wei2009finitistic}
J.~Wei.
\newblock Finitistic dimension and Igusa--Todorov algebras.
\newblock {\em Advances in Mathematics},
\textbf{222}(6):2215--2226, 2009.
\href{https://doi.org/10.1016/j.aim.2009.07.008}{DOI:10.1016/j.aim.2009.07.008}


\bibitem{zhang2024extension}
J.~Zhang and J.~Zheng,
Extension dimensions of derivd and stable equivalent algebras,
{\em Journal of Algebra},
\textbf{646}(15):17--48, 2024.
\href{https://doi.org/10.1016/j.jalgebra.2024.01.035}{DOI:10.1016/j.jalgebra.2024.01.035}


\bibitem{zheng2019extension}
J.~Zheng, X.~Ma, and Z.~Huang.
\newblock The extension dimension of Abelian categories.
\newblock {\em Algebras and Representation Theory},
\textbf{23}(3):693--713, 2020.
\href{https://doi.org/10.1007/s10468-019-09861-z}{DOI:10.1007/s10468-019-09861-z}


\bibitem{zheng2020upper}
J.~Zheng and Z.~Huang.
\newblock An upper bound for the dimension of bounded derived categories.
\newblock {\em Journal of Algebra},
\textbf{556}(15):1211--1228, 2020.
\href{https://doi.org/10.1016/j.jalgebra.2020.04.012}{DOI:10.1016/j.jalgebra.2020.04.012}


\bibitem{zheng2022thedimension}
J.~Zheng and Z.~Huang.
\newblock The derived and extension dimensions of Abelian categories.
\newblock {\em Journal of Algebra},
\textbf{606}(15):243--265, 2022.
\href{https://doi.org/10.1016/j.jalgebra.2022.05.003}{DOI:10.1016/j.jalgebra.2022.05.003}


\bibitem{zheng_huang_2020finitistic}
J.~Zheng and Z.~Huang.
\newblock The finitistic dimension and chain conditions on ideals.
\newblock {\em Glasgow Mathematical Journal},
\textbf{64}(1):37--44, 2022.
\href{https://doi.org/10.1017/S001708952000052X}{DOI:10.1017/S001708952000052X}

\bibitem{zheng-tian-shu-zhang2024}
J. Zheng, L. Tian, Q. Shu and J. Zhang.
\newblock The Extension dimension of syzygy module categories.
\newblock Preprint, \href{https://arxiv.org/abs/2405.02921}{arXiv:2405.02921v1}, 2024

\end{thebibliography}


\end{document}